\theoremstyle{plain}
\newtheorem{theorem}{Theorem}[section]
\newtheorem{lemma}[theorem]{Lemma}
\newtheorem{corollary}[theorem]{Corollary}
\newtheorem{proposition}[theorem]{Proposition}
\newtheorem{observation}[theorem]{Observation}
\newtheorem{remark}[theorem]{Remark}
\newtheorem{question}[theorem]{Question}
\theoremstyle{definition}
\newcommand{\cdim}{\textnormal{cdim}}
\newcommand{\diam}{\textnormal{diam}}
\def\finf{\mathop{{\rm I}\kern -.27 em {\rm F}}\nolimits}
\newcommand{\adim}{\textnormal{adim}}
\begin{document}


\title{The distance-$k$ dimension of graphs}

\author{{\bf{Jesse Geneson}}$^1$ and {\bf{Eunjeong Yi}}$^2$\\
\small San Jose State University, San Jose, CA 95192, USA$^1$\\
\small Texas A\&M University at Galveston, Galveston, TX 77553, USA$^{2}$\\
{\small\em jesse.geneson@sjsu.edu}$^1$; {\small\em yie@tamug.edu}$^2$}

\maketitle

\date{}

\begin{abstract}
The metric dimension, $\dim(G)$, of a graph $G$ is a graph parameter motivated by robot navigation that has been studied extensively. Let $G$ be a graph with vertex set $V(G)$, and let $d(x,y)$ denote the length of a shortest $x-y$ path in $G$. For a positive integer $k$ and for distinct $x,y \in V(G)$, let $d_k(x,y)=\min\{d(x,y), k+1\}$ and let $R_k\{x,y\}=\{z\in V(G): d_k(x,z) \neq d_k(y,z)\}$. A subset $S\subseteq V(G)$ is a \emph{distance-$k$ resolving set} of $G$ if $|S \cap R_k\{x,y\}| \ge 1$ for any pair of distinct $x,y \in V(G)$, and the \emph{distance-$k$ dimension}, $\dim_k(G)$, of $G$ is the minimum cardinality over all distance-$k$ resolving sets of $G$. In this paper, we study the distance-$k$ dimension of graphs. We obtain some general bounds for distance-$k$ dimension. For all $k \ge 1$, we characterize connected graphs $G$ of order $n$ with $\dim_k(G) \ge n-2$. We determine $\dim_k(G)$ when $G$ is a cycle or a path. We also examine the effect of vertex or edge deletion on the distance-$k$ dimension of graphs.
\end{abstract}

\noindent\small {\bf{Keywords:}} distance-$k$ resolving set, distance-$k$ dimension, metric dimension, adjacency dimension\\
\small {\bf{2010 Mathematics Subject Classification:}} 05C12, 05C38


\section{Introduction}

Let $G$ be a finite, simple, undirected, and connected graph with vertex set $V(G)$ and edge set $E(G)$. The \emph{distance} between two vertices $x, y \in V(G)$, denoted by $d(x, y)$, is the length of a shortest path between $x$ and $y$ in $G$. Metric dimension, introduced by Slater~\cite{slater} and by Harary and Melter~\cite{harary}, is a graph parameter that has been studied extensively. For distinct $x,y \in V(G)$, let $R\{x,y\}=\{z\in V(G): d(x,z) \neq d(y,z)\}$. A subset $S\subseteq V(G)$ is a \emph{resolving set} of $G$ if $|S \cap R\{x,y\}| \ge 1$ for any pair of distinct vertices $x$ and $y$ in $G$. The \emph{metric dimension} of $G$, denoted by $\dim(G)$, is the minimum cardinality over all resolving sets of $G$. It is $NP$-hard in general to compute $\dim(G)$~\cite{tree2, NP}.

Khuller et al.~\cite{tree2} considered robot navigation as one of the applications of metric dimension, where a robot that moves from node to node knows its distances to a set of landmarks, which are placed on the elements of the resolving set. Assuming that a sensor that can detect long distance to landmarks can be costly, the authors of~\cite{adim} consider the situation where a robot can only detect landmarks that are adjacent to it. They define the adjacency dimension, $\adim(G)$, of $G$ to be the minimum number of such landmarks that are needed for the robot to determine its position. More generally, if the landmark detection range of a robot is $k>0$, then the minimum number of such landmarks needed to determine the robot's position on the graph is called the distance-$k$ dimension (see~\cite{broadcast}). 

For a positive integer $k$ and for $x,y \in V(G)$, let $d_k(x,y)=\min\{d(x,y),k+1\}$. For a positive integer $k$ and for distinct $x,y \in V(G)$, let $R_k\{x,y\}=\{z\in V(G): d_k(x,z) \neq d_k(y,z)\}$. A subset $S\subseteq V(G)$ is a \emph{distance-$k$ resolving set} of $G$ if $|S \cap R_k\{x,y\}| \ge 1$ for any pair of distinct vertices $x$ and $y$ in $G$, and the \emph{distance-$k$ dimension} of $G$, denoted by $\dim_k(G)$, is the minimum cardinality over all distance-$k$ resolving sets of $G$. The distance-$k$ dimension of graphs was studied in \cite{br}, where it was also investigated more generally for metric spaces. The complexity of the problem was studied in \cite{Juan} and \cite{eyr}, where it was shown that computing $\dim_k(G)$ is an NP-hard problem for any positive integer $k$. The graphs $G$ with $\dim_k(G) = 1$ were characterized in \cite{aem}, which also investigated the problem in a more general setting.

For an ordered set $S=\{u_1, u_2, \ldots, u_{\beta}\} \subseteq V(G)$ of distinct vertices, the metric code and the distance-$k$ metric code, respectively, of $v \in V(G)$ with respect to $S$ is the $\beta$-vector $r_S(v) =(d(v, u_1), d(v, u_2), \ldots, d(v, u_{\beta}))$ and $r_{k,S}(v) =(d_k(v, u_1), d_k(v, u_2), \ldots, d_k(v, u_{\beta}))$, where $k$ is any positive integer. Note that a distance-1 resolving set and the distance-1 dimension, respectively, of $G$ corresponds to an \emph{adjacency resolving set} and the \emph{adjacency dimension} of $G$; notice $\dim_1(G)=\adim(G)$. Jannesari and Omoomi~\cite{adim} introduced adjacency dimension as a tool to study the metric dimension of lexicographic product graphs. 

In this paper, we study the distance-$k$ dimension of graphs. The paper is organized as follows. In Section~\ref{s:general}, we obtain some general results on distance-$k$ dimension of graphs. We prove that the maximum possible order of a graph $G$ with $\dim_d(G) = k$ is $(\lfloor \frac{2(d+1)}{3}\rfloor +1)^{k}+k \sum_{i = 1}^{\lceil \frac{d+1}{3}\rceil } (2i-1)^{k-1}$. It is easy to see that $\dim(G) \le \dim_k(G) \le \dim_1(G)$; we show that $\frac{\dim_k(G)}{\dim(G)}$ and $\frac{\dim_1(G)}{\dim_k(G)}$ can simultaneously be arbitrarily large with respect to $k$. 

In Section~\ref{s:char}, we prove characterization results for distance-$k$ dimension. For all positive integers $k \ge 1$, we characterize all connected graphs $G$ of order $n \ge 4$ for which $\dim_k(G)$ equals $n-2$ or $n-1$. In the case that $k = 1$, this solves the problem from \cite{broadcast} of characterizing the graphs $G$ with $\adim(G) = n-2$ when $G$ is connected. In Section~\ref{s:planar}, we examine the relationship between the distance-$k$ dimension and planarity of graphs. In Section~\ref{classes}, we determine $\dim_k(G)$ for some classes of graphs, including paths and cycles. 

In Section~\ref{s:delete}, we examine the effect of vertex or edge deletion on distance-$k$ dimension of graphs. Let $v$ and $e$, respectively, denote a vertex and an edge of a graph $G$. For any positive integer $k\ge 1$, we show that $\dim_k(G-v)-\dim_k(G)$ can be arbitrarily large (also see~\cite{broadcast} when $k=1$); for $k \ge 2$, we show that $\dim_k(G)-\dim_k(G-v)$ can be arbitrarily large, whereas it was shown in~\cite{broadcast} that $\dim_1(G)-\dim_1(G-v) \le 1$. It was shown in~\cite{broadcast} that $\dim_1(G)-1 \le \dim_1(G-e) \le \dim_1(G)+1$. We show that $\dim_2(G-e) \le \dim_2(G)+1$ and that $\dim_k(G-e) \le \dim_k(G)+2$ for $k \ge 3$. Moreover, in contrast to the case of distance-1 dimension, we show that $\dim_k(G)-\dim_k(G-e)$ can be arbitrarily large for $k\ge2$.

In this paragraph, we introduce some notation that we use in the paper. For $x\in V(G)$ and $S \subseteq V(G)$, let $d(x, S)=\min\{d(x,y) : y \in S\}$. The \emph{diameter}, $\diam(G)$, of $G$ is $\max\{d(x,y): x,y \in V(G)\}$. The \emph{join} of two graphs $H_1$ and $H_2$, denoted by $H_1+H_2$, is the graph obtained from the disjoint union of two graphs $H_1$ and $H_2$ by joining every vertex of $H_1$ with every vertex of $H_2$. We denote by $P_n$, $C_n$, $K_n$, and $K_{a,n-a}$ respectively, the path, the cycle, the complete graph, and the complete bipartite graph on $n$ vertices with one part of size $a$. Suppose $f(x)$ and $g(x)$ are two functions defined for all sufficiently large real numbers $x$. We write $f(x)=O(g(x))$ if there exist positive constants $N$ and $C$ such that $|f(x)| \le C |g(x)|$ for all $x >N$, $f(x)=\Omega(g(x))$ if $g(x)=O(f(x))$, and $f(x)=\Theta(g(x))$ if $f(x)=O(g(x))$ and $f(x)=\Omega(g(x))$.


\section{General bounds}\label{s:general}

In this section, we obtain some general bounds for distance-$k$ dimension of graphs. In order to state the results in this section, we define some terminology. The \emph{open neighborhood} of a vertex $v \in V(G)$ is $N(v)=\{u \in V(G) : uv \in E(G)\}$. For distinct $u,w\in V(G)$, if $N(u)-\{w\}=N(w)-\{u\}$, then  $u$ and $w$ are called \emph{twin vertices} of $G$. We begin with the following observations from \cite{Hernando,adim,br,aem} which we use in our proofs.

\begin{observation}\label{obs_twin}
Let $u$ and $w$ be twin vertices of a graph $G$, and let $k$ be a positive integer. Then 
\begin{itemize}
\item[(a)] \emph{\cite{Hernando}} $S \cap \{u,w\} \neq \emptyset$ for any resolving set $S$ of $G$;
\item[(b)] $S_k \cap \{u,w\} \neq \emptyset$ for any distance-$k$ resolving set $S_k$ of $G$.
\end{itemize}
\end{observation}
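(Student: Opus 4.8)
The plan is to show that $R_k\{u,w\}=\{u,w\}$ exactly; part (b) then drops out of the definition of a distance-$k$ resolving set, since $|S_k\cap R_k\{u,w\}|\ge 1$ forces $S_k\cap\{u,w\}\neq\emptyset$.

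The first and only substantive step is the familiar observation that twin vertices are equidistant from every other vertex: $d(z,u)=d(z,w)$ for all $z\in V(G)-\{u,w\}$. To see this, take a shortest $z$--$u$ path $P$ and let $x$ be the neighbor of $u$ on $P$ immediately preceding $u$. If $x=w$, then $d(z,w)\le d(z,u)-1$. If $x\neq w$, then $x\in N(u)-\{w\}=N(w)-\{u\}$, so $x$ is adjacent to $w$, and replacing $u$ by $w$ at the end of $P$ gives a $z$--$w$ walk of length $d(z,u)$, so $d(z,w)\le d(z,u)$. Either way $d(z,w)\le d(z,u)$, and by the symmetric argument $d(z,u)\le d(z,w)$, so $d(z,u)=d(z,w)$. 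Hence $d_k(z,u)=\min\{d(z,u),k+1\}=\min\{d(z,w),k+1\}=d_k(z,w)$, so $z\notin R_k\{u,w\}$. This shows $R_k\{u,w\}\subseteq\{u,w\}$.

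For the reverse inclusion, note $d_k(u,u)=0$ while $d_k(u,w)=\min\{d(u,w),k+1\}=d(u,w)$, because $d(u,w)\in\{1,2\}$ (it equals $1$ if $u,w$ are adjacent, and $2$ otherwise, since in a connected graph non-adjacent twins have $N(u)=N(w)\neq\emptyset$) and $k\ge 1$. Thus $d_k(u,u)\neq d_k(u,w)$, so $u\in R_k\{u,w\}$; symmetrically $w\in R_k\{u,w\}$. Combining, $R_k\{u,w\}=\{u,w\}$, which yields (b) as above. Alternatively, (b) follows directly from (a) once one observes that every distance-$k$ resolving set is a resolving set: if $d_k(x,z)\neq d_k(y,z)$ then, since $d_k(\cdot,\cdot)$ is a function of $d(\cdot,\cdot)$, necessarily $d(x,z)\neq d(y,z)$.

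There is no real obstacle here. The only part that needs an argument rather than pure unwinding of definitions is the equidistance fact for twins, and that is a short path-surgery argument (and is in any case already implicit in the proof of part (a) cited from \cite{Hernando}); everything else is immediate from the definitions of $d_k$ and $R_k$ together with $k\ge 1$.
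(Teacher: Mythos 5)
Your proof is correct: the path-surgery argument that twins are equidistant from every other vertex gives $R_k\{u,w\}\subseteq\{u,w\}$, from which (b) is immediate, and your alternative route via (a) (every distance-$k$ resolving set is a resolving set, since $d_k$ is a function of $d$) is equally valid. The paper states this as an observation without proof, citing prior work, and your argument is precisely the standard one that the observation relies on.
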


\begin{observation}\label{obs_bounds}\emph{\cite{adim, br}}
Let $G$ be a connected graph of order $n \ge 2$, and let $k$ and $k'$ be positive integers. Then
\begin{itemize}
\item[(a)] $\dim(G)\le \dim_k(G) \le \dim_1(G)$;
\item[(b)] if $k> k'$, then $\dim_k(G) \le \dim_{k'}(G)$.
\end{itemize}
\end{observation}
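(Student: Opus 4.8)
The plan is to derive everything from a single chain of set inclusions. I would prove that for any two distinct vertices $x,y \in V(G)$ and any positive integers $k' \le k$,
\[
R_{k'}\{x,y\} \ \subseteq\ R_k\{x,y\} \ \subseteq\ R\{x,y\}.
\]
Granting this, part (b) is immediate: if $S$ is a minimum distance-$k'$ resolving set then $|S \cap R_{k'}\{x,y\}| \ge 1$ for every pair of distinct $x,y$, hence $|S \cap R_k\{x,y\}| \ge 1$ for every such pair, so $S$ is a distance-$k$ resolving set and $\dim_k(G) \le |S| = \dim_{k'}(G)$. Taking $k' = 1$ (the case $k=1$ being trivial) gives the upper bound $\dim_k(G) \le \dim_1(G)$ of part (a). The lower bound $\dim(G) \le \dim_k(G)$ of part (a) follows the same way from $R_k\{x,y\} \subseteq R\{x,y\}$: a distance-$k$ resolving set is in particular a resolving set.

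For the inclusion $R_k\{x,y\} \subseteq R\{x,y\}$, I would simply note that $d_k(\cdot,\cdot)$ is obtained from $d(\cdot,\cdot)$ by applying the fixed function $t \mapsto \min\{t,k+1\}$; hence $d(x,z) = d(y,z)$ forces $d_k(x,z) = d_k(y,z)$, and contrapositively $z \in R_k\{x,y\}$ implies $d(x,z) \neq d(y,z)$, i.e.\ $z \in R\{x,y\}$. For the inclusion $R_{k'}\{x,y\} \subseteq R_k\{x,y\}$ with $k' \le k$, take $z \in R_{k'}\{x,y\}$ and assume without loss of generality that $d_{k'}(x,z) < d_{k'}(y,z)$. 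Since $d_{k'}(y,z) \le k'+1$, the strict inequality forces $d_{k'}(x,z) \le k'$, so the truncation is inactive at $x$: $d_{k'}(x,z) = d(x,z) \le k' < k+1$, and therefore $d_k(x,z) = d(x,z) = d_{k'}(x,z)$. Meanwhile $d_k(y,z) = \min\{d(y,z),k+1\} \ge \min\{d(y,z),k'+1\} = d_{k'}(y,z)$, so $d_k(y,z) \ge d_{k'}(y,z) > d_{k'}(x,z) = d_k(x,z)$; thus $z \in R_k\{x,y\}$.

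The argument is essentially routine; the only point requiring a little care is the step where a strict inequality between two truncated distances is seen to force the smaller of the two to be untruncated, which is precisely what allows membership in $R_{k'}\{x,y\}$ to be ``lifted'' to $R_k\{x,y\}$. (Alternatively, one may observe that $R_k\{x,y\} = R\{x,y\}$ whenever $k \ge \diam(G)-1$ and then obtain both inclusions from a single monotonicity statement in $k$; the direct computation above is shorter and avoids invoking the diameter.)
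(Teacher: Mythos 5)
Your proof is correct. Note that the paper does not prove this statement at all: it is recorded as an observation with citations to the literature (\cite{adim, br}), so there is no in-paper argument to compare against. Your chain of inclusions $R_{k'}\{x,y\} \subseteq R_k\{x,y\} \subseteq R\{x,y\}$ is exactly the natural way to establish both parts, and the one delicate step --- that a strict inequality $d_{k'}(x,z) < d_{k'}(y,z) \le k'+1$ forces the smaller truncated distance to equal the true distance, so that it survives unchanged when the truncation threshold is raised to $k+1$ --- is handled correctly, which is what makes the monotonicity in $k$ go through.
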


\begin{observation}\label{obs_diam}\emph{\cite{aem}}
Let $G$ be a connected graph with $\diam(G)=d$, and let $k$ be a positive integer. 
\begin{itemize}
\item[(a)]  If $d\in\{1,2\}$, then $\dim_k(G)=\dim(G)$ for any positive integer $k$.
\item[(b)]  If $d \ge 2$, then $\dim_k(G)=\dim_{d-1}(G)=\dim(G)$ for any $k\ge d-1$.
\end{itemize}
\end{observation}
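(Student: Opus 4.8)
The statement has two parts, and I would prove (b) first since (a) will follow quickly from it together with Observation \ref{obs_bounds}. For part (b), fix a connected graph $G$ with $\diam(G) = d \ge 2$ and let $k \ge d-1$. The key point is that the truncation at $k+1$ in the definition $d_k(x,y) = \min\{d(x,y), k+1\}$ never actually triggers: since $d(x,y) \le d \le k+1$ for all $x,y \in V(G)$, we have $d_k(x,y) = d(x,y)$ for every pair of vertices. Hence $R_k\{x,y\} = R\{x,y\}$ for all distinct $x,y$, and therefore a set $S \subseteq V(G)$ is a distance-$k$ resolving set if and only if it is a resolving set; taking minimum cardinalities gives $\dim_k(G) = \dim(G)$. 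The same argument applied with $k$ replaced by $d-1$ (where $d(x,y) \le d = (d-1)+1$ still holds) gives $\dim_{d-1}(G) = \dim(G)$, so $\dim_k(G) = \dim_{d-1}(G) = \dim(G)$ for every $k \ge d-1$.

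For part (a), suppose $d \in \{1,2\}$. If $d = 1$ then $G$ is complete and every pair of vertices is a pair of twins, so $\dim(G) = n-1$; moreover the truncation argument above (with the bound $d(x,y) \le 1 \le k+1$) shows $d_k = d$ identically, hence $\dim_k(G) = \dim(G)$ for all $k \ge 1$. If $d = 2$, then $d(x,y) \le 2 \le k+1$ for every positive integer $k$, so again $d_k(x,y) = d(x,y)$ for all pairs, $R_k\{x,y\} = R\{x,y\}$, and $\dim_k(G) = \dim(G)$. (Alternatively, once part (b) is established, the case $d = 2$ of part (a) is exactly the case $k \ge d-1 = 1$ of part (b), and the case $d=1$ follows since $\dim_k(G) \le \dim_1(G) = n-1 = \dim(G)$ by Observation \ref{obs_bounds}(a).)

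There is essentially no obstacle here: the entire content is the observation that $d_k$ and $d$ coincide as functions whenever the diameter does not exceed $k+1$, after which everything reduces to the corresponding statement about ordinary resolving sets and the metric dimension. The only point requiring a moment's care is making sure the inequality $d(x,y) \le k+1$ genuinely holds in each regime claimed — namely $d \le k+1$ when $k \ge d-1$, and $d \le 2 \le k+1$ when $d \le 2$ and $k \ge 1$ — which is immediate in both cases.
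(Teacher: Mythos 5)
Your proof is correct; since $\diam(G)\le k+1$ in every case considered, the truncation in $d_k$ never takes effect, so $d_k=d$, $R_k\{x,y\}=R\{x,y\}$, and the two notions of resolving set coincide. The paper states this observation without proof (citing \cite{aem}), and your truncation argument is exactly the standard justification intended there, so there is nothing to add.
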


In the next proof, we use a method similar to \cite{tree1} to obtain a general upper bound on $\dim_k(G)$ in terms of the diameter of $G$. In Section~\ref{s:char}, we use this result to characterize the connected graphs $G$ of order $n$ with $\dim_k(G) = n-2$ for all $k \ge 2$ and $n \ge 4$.

\begin{theorem}\label{upper_diam}
If $G$ is a connected graph of order $n \ge 2$ and diameter $d$, then $\dim_k(G) \le n-\min\{d, k+1\}$ for all $k \ge 1$.
\end{theorem}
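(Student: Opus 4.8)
The plan is to take a shortest path realizing the diameter and use it to build a small distance-$k$ resolving set by throwing away an appropriate chunk of the path. Fix a shortest path $P: v_0, v_1, \ldots, v_d$ in $G$ with $d(v_0, v_d) = d$. Let $m = \min\{d, k+1\}$, and set $S = V(G) \setminus \{v_1, v_2, \ldots, v_{m-1}\}$ (so in particular $v_0 \in S$, and if $m < d$ then also $v_m, \ldots, v_d \in S$). Then $|S| = n - (m-1)$; wait — we need $|S| \le n - m$, so instead the right choice is to delete $m$ consecutive interior vertices. Since $P$ has $d+1$ vertices and $m \le d$, the set $\{v_0, v_1, \ldots, v_{m-1}\}$ has $m$ vertices all lying on $P$; take $S = V(G) \setminus \{v_0, v_1, \ldots, v_{m-1}\}$, so $|S| = n - m = n - \min\{d, k+1\}$, and I must show $S$ is a distance-$k$ resolving set.

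To verify that $S$ resolves $G$, I would show that every vertex $u \in V(G) \setminus S = \{v_0, \ldots, v_{m-1}\}$ is distinguished from every other vertex $w \in V(G)$ by some element of $S$. The key landmark is $v_m$ (which lies in $S$, since $m \le d$ means $v_m$ is a vertex of $P$ not among the deleted ones — note $v_m$ exists because $m \le d$). Because $P$ is a shortest path, $d(v_m, v_i) = m - i$ for $0 \le i \le m$, so along the deleted vertices the distances $d(v_m, v_0) = m, d(v_m, v_1) = m-1, \ldots, d(v_m, v_{m-1}) = 1$ are all distinct and all lie in $\{1, 2, \ldots, m\}$. Since $m \le k+1$, we have $d_k(v_m, v_i) = \min\{m-i, k+1\} = m-i$ for each such $i$, so the vertices $v_0, \ldots, v_{m-1}$ receive pairwise distinct distance-$k$ values from the single landmark $v_m \in S$: that settles all pairs $\{v_i, v_j\}$ with both in $V(G) \setminus S$.

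It remains to handle a pair $\{v_i, w\}$ with $0 \le i \le m-1$ and $w \in S$. Here I would use $w$ itself as the resolving landmark: $d_k(w, w) = 0$, whereas $d_k(v_i, w) \ge 1$ since $v_i \ne w$. Hence $w \in R_k\{v_i, w\}$ and $w \in S$, so this pair is resolved. Combined with the previous paragraph, $S$ meets $R_k\{x, y\}$ for every pair of distinct vertices $x, y \in V(G)$, so $S$ is a distance-$k$ resolving set of cardinality $n - \min\{d, k+1\}$, giving $\dim_k(G) \le n - \min\{d, k+1\}$.

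I expect no serious obstacle here; the only points requiring a little care are the bookkeeping on indices (ensuring $v_m$ genuinely lies in $S$, which uses $m \le d$, and that $\{v_0, \ldots, v_{m-1}\}$ has exactly $m$ vertices, which uses that $P$ is a path so these are distinct) and the observation that $m = \min\{d, k+1\} \le k+1$ is exactly what lets the distance-$k$ truncation preserve all the relevant distances from $v_m$. One could alternatively phrase the deletion as removing interior vertices $v_1, \ldots, v_{m-1}$ together with $v_0$; the content is the same. If desired, one can also note this generalizes the standard bound $\dim(G) \le n - \diam(G)$ (which is the case $k \ge d-1$, via Observation~\ref{obs_diam}).
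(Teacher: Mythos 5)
Your proof is correct and follows essentially the same approach as the paper: both delete $\min\{d,k+1\}$ consecutive vertices of a diametral geodesic and use a retained neighboring vertex on that geodesic (you use $v_m$, the paper uses $v_0$) whose truncated distances $d_k$ separate all deleted vertices, with pairs involving a vertex of $S$ resolved trivially by that vertex itself. The difference in which end of the deleted segment serves as the landmark is immaterial.
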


\begin{proof}
Suppose that $u$ and $v$ are vertices in $G$ at distance $d$, and let $u = v_0, v_1, \ldots, v_d = v$ be a path of order $d+1$ with endpoints $u$ and $v$. If $d \le k+1$, then let $S = V(G) - \left\{v_1, \ldots, v_d\right\}$. Note that $d_k(v_0, v_i) = i$ for each $1 \le i \le d$, so $S$ is a distance-$k$ resolving set for $G$.

Otherwise $d > k+1$. In this case, let $S = V(G) - \left\{v_1, \ldots, v_{k+1}\right\}$. Note that $d_k(v_0, v_i) = i$ for each $1 \le i \le k+1$, so $S$ is a distance-$k$ resolving set for $G$.
\end{proof}

In Section \ref{classes}, we strengthen Theorem~\ref{upper_diam} after determining the value of $\dim_k(P_n)$. Next, we recall the following result by Hernando et al.

\begin{theorem}\emph{\cite{Hernando}}\label{dim_bound2}
Let $G$ be a connected graph of order $n$, $\diam(G)=d $, and $\dim(G)=\beta$. Then
$$n \le \left(\left\lfloor\frac{2d}{3}\right\rfloor+1\right)^{\beta}+\beta\sum_{i=1}^{\lceil\frac{d}{3}\rceil}(2i-1)^{\beta-1}.$$ 
\end{theorem}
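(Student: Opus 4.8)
The plan is to count metric codes. Fix a metric basis $W=\{w_1,\dots,w_\beta\}$ realizing $\dim(G)=\beta$; since the metric codes $r_W(v)$ are pairwise distinct, $n$ is at most the number of vectors in $\{0,1,\dots,d\}^{\beta}$ that actually arise as codes. The naive count $(d+1)^{\beta}$ is far too weak, so the first step is to record the constraints that the triangle inequality and $\diam(G)=d$ place on the entries of a \emph{single} code: for any $v$ and any $i,j$ we have $|d(v,w_i)-d(v,w_j)|\le d(w_i,w_j)\le d$, and $d(v,w_j)\le d$. These are what shrink the effective ``window'' for the entries of a code from width $d$ down to roughly $2d/3$, which is where the number $\lfloor 2d/3\rfloor$ comes from.

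Next I would split $V(G)$ into two classes according to the code of $v$. Call $v$ \emph{central} if $d(v,w_j)\le\lfloor\frac{2d}{3}\rfloor$ for every $j$; then there are at most $\big(\lfloor\frac{2d}{3}\rfloor+1\big)^{\beta}$ such vertices, giving the first term. Otherwise $v$ is \emph{peripheral}: there is an index $j$ and an integer $i$ with $1\le i\le\lceil\frac{d}{3}\rceil$ (using $\lfloor\frac{2d}{3}\rfloor+\lceil\frac{d}{3}\rceil=d$, so that $d(v,w_j)=\lfloor\frac{2d}{3}\rfloor+i\le d$ is legitimate). The heart of the argument is a lemma of the form: once $j$ and $i$ are fixed, each of the other $\beta-1$ coordinates $d(v,w_\ell)$, $\ell\neq j$, is confined to a set of at most $2i-1$ integers. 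Granting this, the number of peripheral vertices attached to a given pair $(j,i)$ is at most $(2i-1)^{\beta-1}$, and summing over the $\beta$ choices of $j$ and over $i=1,\dots,\lceil\frac{d}{3}\rceil$ — a peripheral vertex may be counted several times, which only strengthens the inequality — yields $\beta\sum_{i=1}^{\lceil d/3\rceil}(2i-1)^{\beta-1}$. Adding the central and peripheral bounds gives the claimed inequality.

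The main obstacle is precisely this confinement lemma: showing that a landmark $w_j$ at distance $\lfloor\frac{2d}{3}\rfloor+i$ from $v$ forces every remaining coordinate of $r_W(v)$ into an interval of only $2i-1$ integers. A crude triangle-inequality estimate together with $\diam(G)=d$ produces a window that is slightly too wide, so the delicate work is to extract the sharp bound — presumably by arguing along a geodesic from $v$ to $w_j$ (or first passing to a vertex realizing the eccentricity of $w_j$) and tracking the floors and ceilings carefully, with the residues $d\equiv 0,1,2\pmod 3$ and the smallest values of $i$ treated separately; an induction on $\beta$ that peels off one landmark at a time is a plausible alternative route to the same estimate. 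As a sanity check, when $\beta=1$ the graph is a path with $d=n-1$, and $\lfloor\frac{2d}{3}\rfloor+1+\lceil\frac{d}{3}\rceil=d+1=n$, so the bound is attained; this confirms both that no slack can be removed from the two-term split in general and that the constants $\lfloor\frac{2d}{3}\rfloor$ and $\lceil\frac{d}{3}\rceil$ are the right ones.
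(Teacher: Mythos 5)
Your decomposition is inverted relative to the one that actually proves this bound, and the ``confinement lemma'' on which your plan rests is false. Concretely, take the tree on five vertices $w_1,a,b,p,q$ with edges $w_1a$, $ab$, $bp$, $bq$. It has diameter $d=3$ and metric dimension $2$, and $W=\{w_1,p\}$ is a metric basis. Here $\lfloor 2d/3\rfloor=2$ and $\lceil d/3\rceil=1$, so for $i=1$ your lemma asserts that among the vertices $v$ with $d(v,w_1)=\lfloor 2d/3\rfloor+1=3$ the other coordinate $d(v,p)$ takes at most $2i-1=1$ value; but both $p$ and $q$ are at distance $3$ from $w_1$, with $d(p,p)=0$ and $d(q,p)=2$. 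Worse, the peripheral bound itself is numerically false: the vertices having some coordinate exceeding $\lfloor 2d/3\rfloor$ are $p$, $q$, $w_1$, three in all, while your claimed cap is $\beta\sum_{i=1}^{\lceil d/3\rceil}(2i-1)^{\beta-1}=2$. So no amount of careful work with floors, residues, or geodesics can rescue the split ``all coordinates $\le\lfloor 2d/3\rfloor$'' versus ``some coordinate large'': being \emph{far} from a landmark imposes no useful constraint on the remaining coordinates, which is why your crude window came out too wide.

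The working argument (due to Hernando et al.; the present paper does not reprove the cited theorem, but its proof of the maximum order of a graph with $\dim_j(G)=k$ in Section~\ref{s:general} uses exactly this method) is the dual decomposition. For each landmark $w$ and each $0\le i'\le\lceil d/3\rceil-1$, any two vertices $x,y$ with $d(x,w)=d(y,w)=i'$ satisfy $d(x,y)\le 2i'$, hence $|d(x,u)-d(y,u)|\le 2i'$ for every other landmark $u$; since the codes are distinct and agree in the $w$-coordinate, there are at most $(2i'+1)^{\beta-1}$ such vertices, and the shift $i=i'+1$ produces the terms $(2i-1)^{\beta-1}$ for $1\le i\le\lceil d/3\rceil$. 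The vertices not yet counted are at distance at least $\lceil d/3\rceil$ from \emph{every} landmark, so each coordinate of their code lies in $\{\lceil d/3\rceil,\dots,d\}$, a set of size $d-\lceil d/3\rceil+1=\lfloor 2d/3\rfloor+1$, which yields the first term. In short, it is closeness to a landmark (forcing the relevant vertices to be close to one another, hence confining all other coordinates) that gives the $(2i-1)^{\beta-1}$ factors, and the $\bigl(\lfloor 2d/3\rfloor+1\bigr)^{\beta}$ term counts the vertices far from all landmarks, not those with small coordinates.
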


Since $\dim_k(G)=\beta$ implies $\dim(G) \le \beta$ by Observation~\ref{obs_bounds}(a), we have the following 

\begin{corollary}\label{cor_bound1}
For any positive integer $k$ and for any connected graph $G$ with $\diam(G) = d$ and $\dim_k(G) = \beta$, 
$$|V(G)| \leq \left(\left\lfloor \frac{2d}{3}\right\rfloor +1\right)^{\beta}+\beta \sum_{i = 1}^{\lceil \frac{d}{3}\rceil} (2i-1)^{\beta-1}.$$
\end{corollary}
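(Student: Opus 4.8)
The plan is to derive the bound from Theorem~\ref{dim_bound2} together with the inequality $\dim(G)\le\dim_k(G)$ supplied by Observation~\ref{obs_bounds}(a). Write $\beta'=\dim(G)$, so that $\beta'\le\beta$ since $\dim_k(G)=\beta$. Applying Theorem~\ref{dim_bound2} to $G$ (which has diameter $d$ and metric dimension $\beta'$) yields
$$|V(G)|\le\left(\left\lfloor\frac{2d}{3}\right\rfloor+1\right)^{\beta'}+\beta'\sum_{i=1}^{\lceil d/3\rceil}(2i-1)^{\beta'-1}.$$
It then remains only to show that the right-hand side, regarded as a function of the exponent, does not decrease when $\beta'$ is replaced by the possibly larger integer $\beta$.

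For this monotonicity step, observe first that $\lfloor 2d/3\rfloor+1\ge 1$, so $\left(\lfloor 2d/3\rfloor+1\right)^{\beta'}\le\left(\lfloor 2d/3\rfloor+1\right)^{\beta}$. Next, for each $i$ with $1\le i\le\lceil d/3\rceil$ we have $2i-1\ge 1$, hence $(2i-1)^{\beta'-1}\le(2i-1)^{\beta-1}$; combining this with $\beta'\le\beta$ and the nonnegativity of all terms gives $\beta'\sum_{i=1}^{\lceil d/3\rceil}(2i-1)^{\beta'-1}\le\beta\sum_{i=1}^{\lceil d/3\rceil}(2i-1)^{\beta-1}$. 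Adding the two displayed inequalities produces the claimed bound on $|V(G)|$ in terms of $\beta$.

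The argument is essentially bookkeeping; the only place requiring a moment of care is the monotonicity of the Hernando--et--al.\ bound in the dimension parameter, where one must note that every base appearing in the expression is at least $1$, so passing to a larger exponent can only increase the value. I do not anticipate any genuine obstacle here.
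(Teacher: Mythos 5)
Your proposal is correct and follows the paper's own route: apply Theorem~\ref{dim_bound2} with $\dim(G)\le\dim_k(G)=\beta$ (Observation~\ref{obs_bounds}(a)) and use that the Hernando et al.\ bound is nondecreasing in the dimension parameter, a monotonicity step the paper leaves implicit but which you verify correctly since every base is at least $1$.
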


Using a method similar to the one in \cite{Hernando}, we find a sharp upper bound on the maximum possible order of a graph $G$ with $\dim_j(G) = k$. 

\begin{theorem}
The maximum possible order of a graph $G$ with $\dim_j(G) = k$ is $(\lfloor \frac{2(j+1)}{3}\rfloor +1)^{k}+k \sum_{i = 1}^{\lceil \frac{j+1}{3}\rceil } (2i-1)^{k-1}$.
\end{theorem}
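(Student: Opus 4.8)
The plan is to prove this in two directions: an upper bound on $|V(G)|$ (the easy direction) and a matching construction (the hard direction). For the upper bound, observe that if $\dim_j(G) = k$, then the distance-$j$ metric codes $r_{j,S}(v)$ with respect to a minimum distance-$j$ resolving set $S = \{u_1,\ldots,u_k\}$ are pairwise distinct $k$-vectors. The effective behavior of $d_j$ is governed by the quantity $j+1$ playing the role of a diameter: each coordinate $d_j(v,u_i)$ takes values in $\{0,1,\ldots,j+1\}$, and $d_j(v,u_i)=0$ exactly when $v=u_i$. So one cannot simply cite Corollary~\ref{cor_bound1} with $d$ replaced by $j+1$, because $\diam(G)$ may be smaller than $j+1$ (in which case Observation~\ref{obs_diam} makes $\dim_j=\dim$ and the bound of Theorem~\ref{dim_bound2} applies with the true, smaller diameter), or larger. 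The cleanest route is to re-run the counting argument of Hernando et al.\ directly on the truncated metric: fix $u_1 \in S$, partition $V(G)$ according to the value $d_j(v,u_1) \in \{0,1,\ldots,j+1\}$ into "levels" $L_0,\ldots,L_{j+1}$, and for vertices $v,w$ in the same level, bound how far apart the remaining coordinates with respect to $u_2,\ldots,u_k$ can be forced to differ, using that $|d_j(v,u_i)-d_j(w,u_i)| \le d_j(v,w)$ and that $d_j(v,w)$ is controlled by how "close" $v$ and $w$ are within and across a bounded number of consecutive levels. This yields the stated sum $(\lfloor \tfrac{2(j+1)}{3}\rfloor + 1)^k + k\sum_{i=1}^{\lceil (j+1)/3 \rceil}(2i-1)^{k-1}$, with $j+1$ appearing everywhere $d$ appeared in Theorem~\ref{dim_bound2}.

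For sharpness, I would adapt the extremal construction from \cite{Hernando} that achieves equality in Theorem~\ref{dim_bound2}. That construction builds a graph of diameter $d$ with $\dim = k$ attaining the bound; the idea is to take $k$ long "legs" (paths) from a common core, whose lengths and attachment points are chosen so that every vertex gets a distinct code and no smaller resolving set exists, with extra pendant-type vertices distributed to fatten the count to exactly the claimed value. To get a graph with $\dim_j = k$ rather than $\dim = k$, one takes the Hernando et al.\ extremal graph $H$ for "diameter $d = j+1$, metric dimension $k$" and verifies that in fact $\dim_j(H) = k$ as well: since $\diam(H) = j+1 > j$, the truncation $d_j$ does not collapse any distances that the resolving set needs (all distances used to distinguish vertices in $H$'s optimal construction are at most $j+1$, and the truncated value $d_j(x,y) = \min\{d(x,y), j+1\}$ agrees with $d(x,y)$ whenever $d(x,y) \le j+1$, while values equal to $j+1$ are all truncated consistently). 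One must double-check that the lower bound $\dim_j(H) \ge k$ still holds — i.e., that no $(k-1)$-set distance-$j$ resolves $H$ — which follows because $\dim(H) = k$ gives $\dim_j(H) \ge \dim(H) = k$ by Observation~\ref{obs_bounds}(a), no: that inequality goes the wrong way. Instead the lower bound must be argued directly from the structure, using twin-type or "forced" vertices in the construction as in Observation~\ref{obs_twin}(b) and the level-counting argument, exactly mirroring how \cite{Hernando} argues the lower bound for $\dim$.

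The main obstacle is this lower-bound verification for the construction: the inequality $\dim_j \ge \dim$ is false in general (indeed $\dim_j \ge \dim$ is false — we have $\dim \le \dim_j$, so $\dim_j \ge \dim$ is \emph{true}; wait, that is the useful direction) — let me restate. We have $\dim(G) \le \dim_j(G)$ always, so to show $\dim_j(H) \ge k$ it suffices to show $\dim(H) \ge k$, which is exactly what the Hernando et al.\ construction already guarantees. So the genuinely delicate point is the \emph{upper} bound $\dim_j(H) \le k$: one must exhibit a distance-$j$ resolving set of size $k$ in $H$, i.e., check that the resolving set of size $k$ from \cite{Hernando} continues to distance-$j$ resolve once distances are truncated at $j+1$. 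Since $\diam(H) = j+1$, the only distances that change under truncation are those equal to... none — $\min\{d(x,y),j+1\} = d(x,y)$ for all pairs, because $d(x,y) \le \diam(H) = j+1$. Hence $d_j \equiv d$ on $H$, so $\dim_j(H) = \dim(H) = k$, and $|V(H)|$ equals the Hernando et al.\ bound with $d = j+1$, which is precisely the claimed formula. The write-up therefore reduces to: (i) carefully redo the Hernando-style counting with $d_j$ in place of $d$ to get the upper bound for arbitrary $G$; (ii) invoke the extremal graph of \cite{Hernando} with diameter $j+1$ and observe $\diam = j+1$ forces $d_j = d$, giving the matching lower bound. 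I expect step (i) to require the most care, since one must handle the case $\diam(G) < j+1$ (where Observation~\ref{obs_diam}(b) reduces to the smaller diameter and the bound is even stronger) separately from $\diam(G) \ge j+1$.
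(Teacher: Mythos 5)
Your proposal takes essentially the same route as the paper: the paper's upper bound is exactly the Hernando-style count redone for the truncated metric $d_j$ (at most $(2i+1)^{k-1}$ vertices at truncated distance $i \le c$ from each landmark, at most $(j-c+1)^{k}$ vertices whose truncated distances to all landmarks exceed $c$, optimized at $c=\lceil (j+1)/3\rceil-1$), and its sharpness argument is precisely your observation that on the Hernando et al.\ extremal graph of diameter $j+1$ one has $d_j\equiv d$, hence $\dim_j=\dim=k$ with the same order as the bound. The only caveat is that your literal sketch of the counting (levels with respect to a single landmark) must be replaced by this two-regime split to reach the stated constant, which you in effect defer to the argument of \cite{Hernando}, just as the paper does.
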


\begin{proof}
First we prove the upper bound. Let $G$ be a graph with $\dim_j(G) = k$. Let $S$ be a distance-$j$ resolving set for $G$ and let $c \in [0, j]$ be an integer constant that will be chosen at the end. For each $v \in S$ and integer $i \in [0, c]$, define $N_{i}(v) = \left\{x \in V(G) : d_j(x, v) = i \right\}$.

Observe that $|d_j(x, u)-d_j(y, u)| \leq 2i$ for any two vertices $x, y\in N_{i}(v)$ and any vertex $u \in S$, so $d_j(x,v) = i$ and $d_j(x,t)$ has $2i+1$ possible values for each $t \in S$ such that $t \neq v$. Thus $|N_{i}(v)| \leq (2i+1)^{k-1}$. 

Consider $x \in V(G)$ such that $x \not \in N_{i}(v)$ for all $i \in [0, c]$ and $v \in S$, i.e., $d_j(x,v) \geq c+1$ for all $v \in S$. Since $S$ is a distance-$j$ resolving set for $G$, there are at most $(j-c+1)^k$ such vertices. Thus

\[ |V(G)| \leq (j-c+1)^{k}+k\sum_{i = 0}^{c} (2i+1)^{k-1} \]

Setting $c = \lceil \frac{j+1}{3} \rceil -1$ gives the upper bound. To see that the upper bound is sharp, note that the construction in \cite{Hernando} of a graph $G$ of maximum order with diameter $j+1$ and $\dim(G) = k$ must also have $\dim_j(G) = k$ and the same order as the bound we just obtained.
\end{proof}

\begin{remark}
In \cite{broadcast}, there is a simple construction of a graph $G$ with $\dim_1(G) = k$ of maximum order $k+2^k$. For the $j = 2$ case, we also found a simple construction of a graph $G$ with $\dim_2(G) = k$ of maximum order $k+3^k$, which is similar to a construction in \cite{mdapa}. Start with $k$ copies of $K_2$, each on vertices $a_i$ and $b_i$ for $i = 1, \dots, k$. Let $c_j$ for $j = 1, \dots, 3^k$ be labeled with a ternary string. Add an edge from $c_j$ to $a_i$ if the $i^{th}$ digit of $c_j$ is $0$. Add an edge from $c_j$ to $b_i$ if the $i^{th}$ digit of $c_j$ is $1$. Let $S = \left\{a_1, \dots, a_k\right\}$. Remove any $c_j$ with the same distance-$2$ vector as $b_i$ with respect to $S$ for each $i = 1, \dots, k$. The resulting graph $G$ has order $k+3^k$, and $S$ is a distance-$2$ resolving set, so $\dim_2(G) = k$. 
\end{remark}

It was shown in~\cite{linegraph} that metric dimension is not a monotone parameter on subgraph inclusion. Moreover, it was shown in~\cite{broadcast} that, for two graphs $G$ and $H$ with $H \subset G$, $\frac{\dim(H)}{\dim(G)}$ and $\frac{\dim_1(H)}{\dim_1(G)}$ can be arbitrarily large.

Following~\cite{broadcast}, for $m \ge 3$, let $H=K_{\frac{m(m+1)}{2}}$; let $V(H)$ be partitioned into $V_1, V_2, \ldots, V_m$ such that $V_i=\{w_{i,1}, w_{i,2}, \ldots, w_{i,i}\}$ with $|V_i|=i$, where $i \in \{1,2,\ldots,m\}$. Let $G$ be the graph obtained from $H$ and $m$ isolated vertices $u_1,u_2, \ldots, u_m$ such that, for each $i \in \{1,2,\ldots, m\}$, $u_i$ is joined by an edge to each vertex of $V_i \cup (\cup_{j=i+1}^{m}\{w_{j,i}\})$. Since $\diam(H)=1$ and $\diam(G)=2$, by Observation~\ref{obs_diam}(a), $\dim(H)=\dim_k(H)$ and $\dim(G)=\dim_k(G)$ for every positive integer $k$. Note that $H\subset G$, $\dim(H) =\frac{m(m+1)}{2}-1$ by Theorem~\ref{dim_characterization}(c), and $\dim(G) \le m$ since $\{u_1, u_2, \ldots, u_m\}$ forms a resolving set of $G$. So, $\frac{\dim_k(H)}{\dim_k(G)}=\frac{\dim(H)}{\dim(G)} \ge \frac{m^2+m-2}{2m}$ for every positive integer $k$, which implies the following.

\begin{corollary}
For all positive integers $k$ and $N$, there exist connected graphs $G$ and $H$ such that $H \subset G$ and $\frac{\dim_k(H)}{\dim_k(G)} > N$.
\end{corollary}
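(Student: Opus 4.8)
The statement follows by formalizing the construction in the paragraph that precedes it, so the plan is to take the family of pairs $(H,G)$ described there, indexed by $m\ge 3$, and verify the three ingredients that force the ratio to grow without bound.

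First I would record the two diameter computations. Since $H=K_{m(m+1)/2}$ is complete, $\diam(H)=1$. For $G$: each $u_i$ has a neighbor in $H$ (for instance $w_{i,1}\in V_i$), so $G$ is connected; any two vertices of $H$ are adjacent; any $u_i$ and any non-neighbor $w\in V(H)$ are joined through a common neighbor in $H$; and for $i<j$ the vertex $w_{j,i}$ lies in $V_j$ (hence is adjacent to $u_j$) and also lies in $\bigcup_{j'=i+1}^{m}\{w_{j',i}\}$ (hence is adjacent to $u_i$), so $d(u_i,u_j)=2$. Thus every pair of vertices of $G$ is at distance at most $2$, while $d(u_1,u_2)=2$ shows $\diam(G)=2$. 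By Observation~\ref{obs_diam}(a) this gives $\dim_k(H)=\dim(H)$ and $\dim_k(G)=\dim(G)$ for every positive integer $k$, reducing the problem to the ordinary metric dimension.

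Next I would compute the two ordinary dimensions. Since $H$ is complete, $\dim(H)=\frac{m(m+1)}{2}-1$ by Theorem~\ref{dim_characterization}(c). For the upper bound $\dim(G)\le m$, I would show that $S=\{u_1,\dots,u_m\}$ is a resolving set of $G$ by exhibiting the codes $r_S(\cdot)$: one checks directly that $w_{i,\ell}\sim u_j$ if and only if $j=i$ or ($j=\ell$ and $\ell<i$), so $r_S(w_{i,\ell})$ has entry $1$ exactly in coordinate $i$ when $\ell=i$, and exactly in coordinates $i$ and $\ell$ when $\ell<i$, and entry $2$ in every other coordinate; meanwhile $r_S(u_i)$ has entry $0$ in coordinate $i$ and entry $2$ elsewhere. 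These $\frac{m(m+1)}{2}+m$ vectors are pairwise distinct: the $u_i$'s are distinguished from the $w$'s (and from each other) by the unique coordinate holding a $0$, and two vertices $w_{i,\ell}$, $w_{i',\ell'}$ are distinguished by the set of coordinates holding a $1$. Hence $S$ resolves $G$.

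Finally, combining the pieces, $\frac{\dim_k(H)}{\dim_k(G)}=\frac{\dim(H)}{\dim(G)}\ge\frac{m(m+1)/2-1}{m}=\frac{m^2+m-2}{2m}$, which tends to infinity with $m$; choosing $m$ large enough that $\frac{m^2+m-2}{2m}>N$ completes the proof. The only step with genuine content is verifying that $S=\{u_1,\dots,u_m\}$ resolves $G$ — that is, pinning down the adjacency condition $w_{i,\ell}\sim u_j$ exactly and confirming injectivity of the resulting codes; everything else is bookkeeping around Observation~\ref{obs_diam}(a) and the value $\dim(K_n)=n-1$.
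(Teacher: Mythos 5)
Your proposal is correct and follows essentially the same route as the paper: the paper establishes this corollary via exactly the construction in the preceding paragraph (complete graph $H=K_{m(m+1)/2}$ inside $G$, diameters $1$ and $2$, Observation~\ref{obs_diam}(a), $\dim(H)=\frac{m(m+1)}{2}-1$, and $\{u_1,\dots,u_m\}$ resolving $G$). You merely fill in the adjacency and code-injectivity details that the paper asserts without proof, which is fine.
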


Next, in view of Observation~\ref{obs_bounds}(a), we show that $\frac{\dim_k(G)}{\dim(G)}$ and $\frac{\dim_1(G)}{\dim_k(G)}$ can be arbitrarily large with respect to $k$; thus, $\dim_k(G)-\dim(G)$ and $\dim_1(G)-\dim_k(G)$ can be arbitrarily large with respect to $k$.

\begin{proposition}\emph{\cite{grid}}\label{dim_grid}
For the grid graph $G=P_m \times P_n$ ($m,n \ge 2$), $\dim(G)=2$.
\end{proposition}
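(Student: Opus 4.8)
The plan is to establish the two inequalities $\dim(G)\le 2$ and $\dim(G)\ge 2$ separately. Throughout I label $V(G)=\{(i,j): 1\le i\le m,\ 1\le j\le n\}$, with $(i,j)$ and $(i',j')$ adjacent exactly when $|i-i'|+|j-j'|=1$, and I first record the distance formula $d\big((i,j),(i',j')\big)=|i-i'|+|j-j'|$. This is immediate: one can change the two coordinates independently, one unit at a time, and no shorter route exists since every edge changes exactly one coordinate by one.

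For the upper bound I would take $S=\{(1,1),(m,1)\}$, a set of two distinct vertices because $m\ge 2$. For an arbitrary vertex $w=(i,j)$ the distance formula gives $d\big(w,(1,1)\big)=(i-1)+(j-1)$ and $d\big(w,(m,1)\big)=(m-i)+(j-1)$; adding these two quantities recovers $j$ and subtracting them recovers $i$, so the metric code $r_S(w)$ determines $w$ uniquely. Hence $S$ is a resolving set of $G$ and $\dim(G)\le 2$.

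For the lower bound it suffices to rule out resolving sets of size $1$. In $P_m\times P_n$ with $m,n\ge 2$ every vertex has degree at least $2$ — even a corner such as $(1,1)$ retains the two neighbors $(1,2)$ and $(2,1)$ — so any vertex $v$ has two distinct neighbors $x\ne y$ with $d(v,x)=d(v,y)=1$, and then $v\notin R\{x,y\}$. Thus no singleton resolves $G$, so $\dim(G)\ge 2$, and combining the two bounds gives $\dim(G)=2$.

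I do not expect a real obstacle here; the only point needing slight care is the verification that the pair of distances to the vertices of $S$ actually inverts to the pair of coordinates. In particular, two \emph{opposite} corners would fail, since their distances to $w$ always sum to the constant $m+n-2$, which is precisely why $S$ is taken to consist of two corners lying in a common column.
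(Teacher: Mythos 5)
Your proof is correct. The paper itself does not prove this proposition --- it is quoted from \cite{grid} --- so there is no in-paper argument to compare against; your self-contained proof is the standard one: the $\ell_1$ distance formula for the Cartesian-product grid, two corners on a common side as a resolving set (with the right caveat that opposite corners would fail, since their distances to any vertex sum to the constant $m+n-2$), and the exclusion of singletons because every vertex has two neighbors at distance $1$. For the lower bound you could alternatively invoke Theorem~\ref{dim_characterization}(a): $\dim(G)=1$ would force $G$ to be a path, which $P_m\times P_n$ with $m,n\ge 2$ is not --- but your direct degree argument is equally valid.
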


\begin{theorem}\emph{\cite{broadcast}}\label{adim_grid}
For $m \ge 2$, let $G=P_m \times P_m$. Then $\dim_1(G)=\Theta(m^2)$; thus $\frac{\dim_1(G)}{\dim(G)}$ can be arbitrarily large with respect to $m$.
\end{theorem}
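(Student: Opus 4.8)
The plan is to prove the matching bounds $\dim_1(P_m \times P_m) = O(m^2)$ and $\dim_1(P_m \times P_m) = \Omega(m^2)$ separately; the lower bound carries essentially all of the content.

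For the upper bound, simply note that $G = P_m \times P_m$ has order $n = m^2$ and, for $m \ge 2$, diameter $d = 2(m-1) \ge 2$, so Theorem~\ref{upper_diam} gives $\dim_1(G) \le n - \min\{d,2\} = m^2 - 2 = O(m^2)$.

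For the lower bound, I would isolate a general observation about adjacency resolving sets. Writing $N(S) = \bigcup_{s \in S} N(s)$, I claim that if $S$ is a distance-$1$ resolving set of a graph $H$, then at most one vertex of $V(H)$ lies outside $S \cup N(S)$. Indeed, if $v_1 \ne v_2$ both satisfy $v_i \notin S$ and $N(v_i) \cap S = \emptyset$, then $d_1(v_1, s) = 2 = d_1(v_2, s)$ for every $s \in S$, so $r_{1,S}(v_1) = r_{1,S}(v_2)$, contradicting that $S$ resolves the pair $\{v_1,v_2\}$. Hence $|S \cup N(S)| \ge |V(H)| - 1$. Specializing to $H = G = P_m \times P_m$, where every vertex has degree at most $4$, we get $|N(S)| \le \sum_{s \in S} |N(s)| \le 4|S|$, and therefore $5|S| \ge |S \cup N(S)| \ge m^2 - 1$, i.e. $\dim_1(G) = |S| \ge \frac{m^2-1}{5} = \Omega(m^2)$. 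Combining the two bounds yields $\dim_1(G) = \Theta(m^2)$; since $\dim(G) = 2$ by Proposition~\ref{dim_grid}, the ratio $\frac{\dim_1(G)}{\dim(G)} = \Theta(m^2)$ is unbounded in $m$, as claimed.

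As for the main obstacle: there is genuinely little difficulty here. The key realization is that under distance-$1$ truncation the code of a vertex records only whether it equals, is adjacent to, or is neither equal nor adjacent to each landmark, so in a graph of bounded maximum degree any resolving set together with its neighborhood must cover all but one vertex. The only point requiring a little care is keeping the vertices of $S$ itself separate from their neighbors in the estimate $|S \cup N(S)| \le 5|S|$, and allowing for the single permitted exceptional vertex outside $S \cup N(S)$.
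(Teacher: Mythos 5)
Your proof is correct. Note that the paper does not prove this statement itself --- it is quoted from \cite{broadcast} --- so there is no internal proof to compare against; but your argument is exactly the standard one and matches the technique this paper uses elsewhere: your observation that all but at most one vertex must lie in $S \cup N(S)$ (since two vertices with no landmark at distance $\le 1$ share the all-$2$ truncated code) is the $k=1$ case of the packing argument in the proof of Theorem~\ref{comp_kdim}, where any distance-$k$ resolving set must hit every $P_{2k+1}\times P_{2k+1}$ subgrid with at most one exception. Combining that with the maximum degree $4$ of the grid to get $\dim_1(G) \ge \frac{m^2-1}{5}$, and the trivial $O(m^2)$ upper bound (Theorem~\ref{upper_diam}, or simply $\dim_1(G) \le n-1$), is a complete and correct proof of $\dim_1(P_m \times P_m) = \Theta(m^2)$, and the final ratio claim follows from Proposition~\ref{dim_grid} as you say.
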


\begin{theorem}\label{comp_kdim}
For any positive integer $k>1$, let $G=P_{k^2} \times P_{k^2}$. Then $\dim_k(G)=\Theta(k^2)$, and thus $\frac{\dim_k(G)}{\dim(G)}$ and $\frac{\dim_1(G)}{\dim_k(G)}$ can simultaneously be arbitrarily large with respect to $k$.
\end{theorem}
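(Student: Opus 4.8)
The plan is to establish two separate bounds for $G = P_{k^2} \times P_{k^2}$: first, a lower bound $\dim_k(G) = \Omega(k^2)$, and second, a matching upper bound $\dim_k(G) = O(k^2)$; since $|V(G)| = k^4$, the statement $\dim_k(G) = \Theta(k^2)$ does not follow from the trivial bound $\dim_k(G) \le n-1$, so both directions require genuine work.

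For the lower bound, I would partition the $k^2 \times k^2$ grid into $k^2$ sub-blocks, each of dimensions roughly $k \times k$ (say $\lfloor k^2/k \rfloor = k$ blocks along each axis). Consider a distance-$k$ resolving set $S$. Within a single $k \times k$ block, any two vertices $x,y$ lie at grid-distance at most $2(k-1) < k+1$ is \emph{not} quite true — so instead I would take blocks of side length $\lceil (k+1)/2 \rceil$ or so, arranged to tile (or nearly tile) the grid, chosen small enough that two vertices inside one block have mutual distance $\le k$ from each other but, more to the point, so that a landmark $z$ lying outside a block at distance $\ge k+1$ from every vertex of the block cannot distance-$k$ resolve any pair inside it (all such $z$ give code entry $k+1$). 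Then every block must contain at least one landmark of $S$, or else a landmark must be "close" to that block; a counting argument over the $\Theta(k^2)$ disjoint blocks — each of which needs a nearby or interior landmark, and each landmark is near only $O(1)$ blocks since block side length is $\Theta(k)$ and the landmark's "reach" is $k+1 = \Theta(k)$ — forces $|S| = \Omega(k^2)$. This is essentially the argument structure behind Theorem~\ref{adim_grid} for $k=1$, scaled up; I expect this counting step to be the main obstacle, since one must be careful that a single landmark at distance $\le k+1$ from a block can in fact resolve pairs in several blocks, so the "each landmark serves $O(1)$ blocks" claim needs the right block size and a clean packing estimate.

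For the upper bound, I would exhibit an explicit distance-$k$ resolving set of size $O(k^2)$. The natural candidate is a grid-like sub-lattice of landmarks: place a landmark at every vertex $(ak', bk')$ for a suitable spacing $k' = \Theta(k)$ (e.g. $k' = \lceil k/2 \rceil$), giving $O((k^2/k)^2) = O(k^2)$ landmarks. For any vertex $v$, the nearest landmark to the "south-west" and the nearest to the "north-east" (or a constant number of surrounding landmarks) have distances $\le k$ to $v$, and from the exact distances (not truncated) to two or three such landmarks one recovers the coordinates of $v$, exactly as in the proof that $\dim(P_m \times P_n) = 2$ (Proposition~\ref{dim_grid}). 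The key point is that since every vertex is within distance $\le k$ of several sub-lattice landmarks, the truncation $d_k(\cdot,\cdot) = \min\{d(\cdot,\cdot), k+1\}$ does not lose the information needed, so the sub-lattice is distance-$k$ resolving. Combining the two bounds gives $\dim_k(G) = \Theta(k^2)$.

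Finally, the "thus" clause is immediate from what precedes: by Proposition~\ref{dim_grid}, $\dim(G) = 2$, so $\dim_k(G)/\dim(G) = \Theta(k^2)$, which is unbounded in $k$; and by Observation~\ref{obs_bounds}(a) together with Theorem~\ref{adim_grid} applied to $P_{k^2}\times P_{k^2}$ (with $m = k^2$), $\dim_1(G) = \Theta(k^4)$, so $\dim_1(G)/\dim_k(G) = \Theta(k^4)/\Theta(k^2) = \Theta(k^2)$, also unbounded in $k$. Hence both ratios grow without bound simultaneously as $k \to \infty$.
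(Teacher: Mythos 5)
Your proposal is correct and follows essentially the same route as the paper: a packing lower bound over disjoint blocks of side $\Theta(k)$, an explicit $\Theta(k)$-spaced lattice of landmarks for the upper bound, and the ratio claims via Proposition~\ref{dim_grid} and Theorem~\ref{adim_grid}. The counting step you flag as the main obstacle is sidestepped in the paper by taking blocks of side $2k+1$: a block containing no landmark forces its center to have the all-$(k+1)$ code, which at most one vertex of $G$ can have, so all but at most one block contain a landmark and $\dim_k(G) \ge \frac{k^4}{(2k+1)^2}-1$ follows directly.
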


\begin{proof}
First, observe that $\dim(G)=2$ by Proposition~\ref{dim_grid}, and $\dim_1(G)=\Theta(k^4)$ by Theorem~\ref{adim_grid}. Next, we show that $\dim_k(G)=\Theta(k^2)$. Since any distance-$k$ resolving set of $G$ must contain at least one vertex from every $P_{2k+1} \times P_{2k+1}$ subgraph of $G$ except for at most one such subgraph, $\dim_k(G) \ge \frac{k^4}{(2k+1)^2}-1$. On the other hand, if the grid graph $P_{k^2} \times P_{k^2}$ is drawn in the $xy$-plane with the four corners at $(1,1)$, $(k^2,1)$, $(1,k^2)$ and $(k^2,k^2)$ and with horizontal/vertical edges of equal lengths, then $[\cup_{j=0}^{k+1} \cup_{i=0}^{k+1}\{(1+(k-1)i, 1+(k-1)j)\}] \cup [\cup_{j=0}^{k}\cup_{i=0}^{k}\{(\lceil\frac{k}{2}\rceil+(k-1)i, \lceil\frac{k}{2}\rceil+(k-1)j)\}]$ forms a distance-$k$ resolving set of $G$, and hence $\dim_k(G) \le (k+2)^2+(k+1)^2 < 2(k+2)^2$. So, $\dim_k(G)=\Theta(k^2)$. Therefore, $\frac{\dim_k(G)}{\dim(G)}$ and $\frac{\dim_1(G)}{\dim_k(G)}$ can  simultaneously be arbitrarily large with respect to $k$.~\hfill
\end{proof}

\section{Characterizing graphs by their distance-$k$ dimension}\label{s:char}

It is known that, for any connected graph $G$ of order at least two, $1 \le \dim(G) \le |V(G)|-1$ (see~\cite{tree1}) and $1 \le \dim_1(G) \le |V(G)|-1$ (see~\cite{adim}). We recall some characterization results on metric dimension and distance-$1$ dimension, before proving characterization results about distance-$k$ dimension.

\begin{theorem}\emph{\cite{tree1}}\label{dim_characterization}
Let $G$ be a connected graph of order $n \ge 2$. Then 
\begin{itemize}
\item[(a)] $\dim(G)=1$ if and only if $G=P_n$;
\item[(b)] for $n \ge 4$, $\dim(G)=n-2$ if and only if $G=K_{s,t}$ ($s,t \ge 1$), $G=K_s+\overline{K_t}$ ($s\ge1, t\ge2$), or $G=K_s+(K_1 \cup K_t)$ ($s,t \ge 1$), where $\overline{H}$ denotes the complement of a graph $H$;
\item[(c)] $\dim(G)=n-1$ if and only if $G=K_n$.
\end{itemize}
\end{theorem}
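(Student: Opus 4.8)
The plan is to dispatch parts (a) and (c) quickly and to concentrate on part (b), which carries essentially all of the work. The single tool I would set up first is the equivalence, for distinct $a,b\in V(G)$, between $R\{a,b\}=\{a,b\}$ and $a,b$ being twin vertices: one direction is standard (if $a,b$ are twins then $d(a,\cdot)$ and $d(b,\cdot)$ agree off $\{a,b\}$), and for the other, a shortest $a-b$ path of length $\ge 3$, or a neighbour of $a$ on a shortest $a-b$ path, would already provide a separating vertex outside $\{a,b\}$, forcing $d(a,b)\le 2$, after which comparing the vertices at distance $1$ from $a$ with those at distance $1$ from $b$ yields $N(a)\setminus\{b\}=N(b)\setminus\{a\}$. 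Consequently $V(G)\setminus\{a,b\}$ resolves $G$ exactly when $a,b$ are not twins, and $V(G)\setminus\{a,b,c\}$ resolves $G$ exactly when, for each of the three pairs among $a,b,c$, the set $R$ of that pair is not contained in $\{a,b,c\}$ (every other pair of vertices being separated by one of its own members). In particular $\dim(G)\le n-2$ iff $G$ has a non-twin pair.

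Part (a): if $G=P_n$ an endpoint realizes the distances $0,1,\dots,n-1$, so $\dim(G)=1$; conversely a single-vertex resolving set $\{v\}$ forces $\{d(v,x):x\in V(G)\}=\{0,\dots,n-1\}$, hence the BFS layers from $v$ are singletons and an edge joining layers $i$ and $j$ with $j\ge i+2$ is impossible, so $G$ is a path. Part (c): $\dim(G)=n-1$ means no $(n-2)$-subset resolves, i.e. every two vertices are twins; since being twins is an equivalence relation whose classes are cliques or independent sets, a connected such $G$ is $K_n$, and conversely $\dim(K_n)=n-1$.

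For (b) I would first reduce to $\diam(G)=2$: $\dim(G)=n-2$ excludes $G=K_n$ by (c) so $\diam(G)\ge 2$, while Theorem~\ref{upper_diam} with Observation~\ref{obs_bounds}(a) gives $\dim(G)\le n-\diam(G)$, hence $\diam(G)\le 2$. \emph{(If)}: each listed graph has diameter $2$ in the stated parameter ranges, so $\dim\le n-2$; $K_{s,t}$ and $K_s+\overline{K_t}$ each have exactly two twin classes, so $\dim\ge n-2$ by Observation~\ref{obs_twin}(b); and $K_s+(K_1\cup K_t)$ (for $t\ge 2$) has three twin classes, so one must additionally show that no $(n-3)$-subset resolves, which follows because for any three omitted vertices at least one pair among them has its $R$-set inside the triple (concretely, a $K_s$-vertex together with a $K_t$-vertex has $R$-set equal to exactly those two vertices and the isolated one). \emph{(Only if)}: with $\diam(G)=2$, let $m$ be the number of twin classes (so $m\ge 2$ by Observation~\ref{obs_twin}(b)) and contract each class to a point to obtain a connected twin-free graph $\tilde G$ on $m$ vertices, each class being a clique or an independent set and adjacent classes being completely joined. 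For $m=2$ one has $\tilde G=K_2$ and hence $G\in\{K_{s,t},\ K_s+\overline{K_t},\ K_n\}$; discarding $K_n$ and recording which parameter choices keep $G$ connected of diameter $2$ gives the first two families. For $m=3$, $\tilde G$ is $P_3$ or $K_3$, and I would go through the handful of clique/independent labelings of the three classes, computing $R$ for each of the three cross pairs and the condition under which an $(n-3)$-subset resolves; every labeling either collapses (contradicting $m=3$) or forces $\dim\le n-3$, the only surviving cases being exactly the graphs $K_s+(K_1\cup K_t)$. For $m\ge 4$ I would show $\dim(G)\le n-3$ by picking three vertices from three suitable classes so that each pair among them is separated by a vertex of a fourth class, using that $R\{x,y\}\setminus\{x,y\}$ is a union of twin classes and hence can fail to reach outside a triple only if the third vertex forms a singleton class.

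The main obstacle is the "only if" direction of (b). The $m=3$ case is delicate: of the clique/independent labelings of the three classes, most collapse to $m=2$ or give $\dim=n-3$, and one must keep careful track of which parameter ranges survive so that the final list is exactly $K_{s,t}$ ($s,t\ge1$), $K_s+\overline{K_t}$ ($s\ge1,t\ge2$), and $K_s+(K_1\cup K_t)$ ($s,t\ge1$) — including using $n\ge4$ to eliminate the small coincidences among these families — and ruling out $m\ge 4$ requires the union-of-twin-classes description of $R\{x,y\}$ to be established and exploited with some care.
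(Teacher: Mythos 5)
This theorem is not proved in the paper at all: it is quoted from \cite{tree1}, so there is no in-paper argument to compare against and your proof has to stand on its own as a reconstruction. Judged that way, your overall strategy is sound, and parts (a), (c), the reduction of (b) to $\diam(G)=2$, and the ``if'' direction of (b) (including the extra step showing no $(n-3)$-set resolves $K_s+(K_1\cup K_t)$ via $R\{a,c\}=\{a,c,b\}$) are essentially complete and correct. One small error along the way: the quotient $\tilde{G}$ obtained by contracting twin classes is \emph{not} twin-free in general (already for $K_s+\overline{K_t}$ it is $K_2$), though you do not actually use that claim; and you silently assume the standard fact that the twin relation is an equivalence relation whose classes are cliques or independent sets, which should at least be stated with a proof or a reference (it is in \cite{Hernando}).

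The genuine gap is in the ``only if'' direction of (b), precisely where the work lies. For $m=3$ you only promise to ``go through the handful of clique/independent labelings''; the claimed outcome is in fact correct (I checked the enumeration: e.g.\ $K_s+(K_{t_1}\cup K_{t_3})$ with $t_1,t_3\ge 2$, or the join of a clique with a complete bipartite graph, all admit a good omitted triple and so have $\dim\le n-3$), but as written it is an assertion, not a proof. More seriously, for $m\ge 4$ your one-line plan --- ``pick three vertices from three suitable classes so that each pair among them is separated by a vertex of a fourth class'' --- is exactly the statement that needs proving, and your supporting observation only shows that a pair $\{x,y\}$ from distinct classes can fail inside a triple $\{x,y,z\}$ when $z$ is a singleton class with $R\{x,y\}=\{x,y,z\}$; it does not show such a ``good'' triple always exists. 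To close this you need a further argument, e.g.: each bad pair of classes has a \emph{unique} singleton witness class, so bad pairs can account for at most $\binom{m}{2}$ of the $\binom{m}{3}$ cross-class triples, which settles $m\ge 6$, while $m=4,5$ still require a structural analysis of the class-adjacency graph (all witnesses being singleton classes, internal separators inside size-$\ge 2$ clique/independent classes, etc.). Without that, the case $m\ge 4$ --- and hence the forward implication of (b) --- is not established.
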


\begin{theorem}\emph{\cite{adim}}\label{adj_characterization}
Let $G$ be a connected graph of order $n \ge 2$. Then
\begin{itemize}
\item[(a)] $\dim_1(G)=1$ if and only if $G \in \{P_2, P_3\}$;
\item[(b)] $\dim_1(G)=n-1$ if and only if $G=K_n$.
\end{itemize}
\end{theorem}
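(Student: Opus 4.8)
The plan is to prove the two parts separately, each by a short structural argument built around the fact that a distance-$1$ code coordinate $d_1(v,u)=\min\{d(v,u),2\}$ takes only the values $0,1,2$.

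For part (a), I would first note that if $\{u\}$ is a distance-$1$ resolving set of $G$, then $v\mapsto d_1(v,u)$ is injective on $V(G)$ with image in $\{0,1,2\}$, forcing $|V(G)|\le 3$. The value $0$ is attained only at $u$, so at most one vertex is a neighbor of $u$ (code $1$) and at most one is a non-neighbor (code $2$). When $n=2$, connectedness gives $G=P_2$, and $\{u\}$ resolves it for either vertex $u$. When $n=3$, the connected graphs are $P_3$ and $K_3$: in $K_3$ the two vertices other than $u$ are both neighbors of $u$ and receive the common code $1$, so $\dim_1(K_3)=2$, while for $P_3$ an endpoint $u$ yields codes $0,1,2$, so $\dim_1(P_3)=1$. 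Conversely one checks directly that a single endpoint resolves $P_2$ and $P_3$. Since $\dim_1(G)\ge 1$ for every connected $G$ of order $n\ge 2$, this yields the stated equivalence.

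For part (b), I would start from the uniform bound $\dim_1(G)\le n-1$: for any $v$, the set $S=V(G)\setminus\{v\}$ is a distance-$1$ resolving set, since for any two distinct vertices at least one, say $x$, lies in $S$ and then $d_1(x,x)=0<d_1(y,x)$. If $G=K_n$, every two distinct vertices are twins, so by Observation~\ref{obs_twin}(b) every distance-$1$ resolving set meets each such pair; concretely, if $|S|\le n-2$ then $S$ omits two vertices $u,w$, and in $K_n$ both of them have the all-ones code with respect to $S$, so $S$ fails to resolve $\{u,w\}$. Hence $\dim_1(K_n)=n-1$. For the converse, assume $G$ is connected with $G\ne K_n$; I claim $\dim_1(G)\le n-2$. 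Choose distinct $u,w\in V(G)$ that are \emph{not} twins (existence discussed below) and set $S=V(G)\setminus\{u,w\}$. The only pair of distinct vertices not automatically resolved by a zero coordinate is $\{u,w\}$ itself, and since $u$ and $w$ are not twins there is a vertex $z\in S$ adjacent to exactly one of them, whence $d_1(u,z)\ne d_1(w,z)$ and $S$ resolves $\{u,w\}$ too; thus $\dim_1(G)\le |S|=n-2<n-1$.

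The only step requiring genuine (though brief) work is the existence of a non-twin pair in a connected non-complete graph, which I would prove by contradiction. If every two distinct vertices are twins, pick a non-edge $ab$; since $b\notin N(a)$ and $a\notin N(b)$, the twin condition on $\{a,b\}$ forces $N(a)=N(b)$. As $G$ is connected with $n\ge 3$, $a$ has a neighbor $c$ with $c\ne a,b$; then $b\in N(c)$ because $c\in N(a)=N(b)$, so $b\in N(c)\setminus\{a\}$, and applying the twin condition to $\{a,c\}$ gives $b\in N(a)\setminus\{c\}$, i.e. $ab\in E(G)$, a contradiction. Hence $G$ has a non-twin pair, completing part (b). I expect this lemma to be the main obstacle only in the sense that it is the least mechanical ingredient; the remainder is bookkeeping about the three possible values of $d_1$.
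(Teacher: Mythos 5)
Your proposal is correct. Note, however, that the paper does not prove this statement at all: Theorem~\ref{adj_characterization} is quoted as a known result from the cited reference \cite{adim} (Jannesari and Omoomi), so there is no in-paper argument to compare against. Your argument supplies a complete elementary proof: part (a) correctly exploits that a single-landmark code takes values only in $\{0,1,2\}$, forcing $n\le 3$, and then checks the three small connected graphs; part (b) combines the trivial upper bound $\dim_1(G)\le n-1$, the twin argument for $K_n$ (every omitted pair would share the all-ones code, consistent with Observation~\ref{obs_twin}(b)), and, for the converse, the observation that deleting a non-twin pair $\{u,w\}$ from $V(G)$ leaves a distance-$1$ resolving set because only the pair $\{u,w\}$ lacks a zero coordinate and a distinguishing vertex $z$ adjacent to exactly one of them exists and lies outside $\{u,w\}$. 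The auxiliary lemma that a connected non-complete graph contains a non-twin pair is also proved correctly (the hypothesis $G\ne K_n$ together with connectedness forces $n\ge 3$, which you use implicitly). The one cosmetic remark is that your appeal to Observation~\ref{obs_twin}(b) in part (b) is redundant, since you immediately give the direct all-ones-code argument; either alone suffices.
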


More generally, the characterization of graphs $G$ with $\dim_1(G)=\beta$ is provided in~\cite{broadcast} (this includes disconnected graphs). Given any graph $G_1$ on $\beta$ vertices $v_1, \dots, v_{\beta}$ and $G_2$ on $2^{\beta}$ vertices $\left\{u_b\right\}_{b \in \left\{0,1\right\}^{\beta}}$, define the graph $B(G_1, G_2)$ to be obtained by connecting $v_i$ and $u_b$ if and only if the $i^{th}$ digit of $b$ is $1$. Moreover, define $\mathcal{B}(G_1, G_2)$ to be the family of induced subgraphs of $B(G_1, G_2)$ that contain every vertex in $G_1$. Finally, define $\mathcal{H}_0 = \emptyset$ and, for each positive integer $\beta$, define $\mathcal{H}_{\beta}$ to be the family of graphs obtained from taking the union of $\mathcal{B}(G_1, G_2)$ over all graphs $G_1$ with $j$ vertices $v_1, \dots, v_j$ and $G_2$ with $2^j$ vertices $\left\{u_b\right\}_{b \in \left\{0,1\right\}^j}$, for each $1 \leq j \leq \beta$.

\begin{theorem}\emph{\cite{broadcast}}\label{adimch}
For each $\beta \geq 1$, the set of graphs $G$ with $\dim_1(G) = \beta$ is $\mathcal{H}_{\beta} - \mathcal{H}_{\beta-1}$ up to isomorphism.
\end{theorem}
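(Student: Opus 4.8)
The plan is to prove Theorem~\ref{adimch} by a bidirectional argument: first that every graph in $\mathcal{H}_\beta - \mathcal{H}_{\beta-1}$ has distance-$1$ dimension exactly $\beta$, and second that every graph with $\dim_1(G)=\beta$ lies in $\mathcal{H}_\beta - \mathcal{H}_{\beta-1}$ up to isomorphism. The key observation underpinning everything is that for a distance-$1$ resolving set $S=\{v_1,\dots,v_j\}$ and any vertex $x\notin S$, the distance-$1$ code $r_{1,S}(x)$ is determined entirely by the set $b(x)\subseteq\{1,\dots,j\}$ of indices $i$ with $v_i$ adjacent to $x$: namely the $i$th coordinate of $r_{1,S}(x)$ is $1$ if $i\in b(x)$ and $2$ otherwise. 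Thus a set $S$ of size $j$ is a distance-$1$ resolving set exactly when the map $x\mapsto b(x)$ is injective on $V(G)\setminus S$ and, additionally, no vertex outside $S$ shares its adjacency pattern with a vertex inside $S$ (here the code of $v_i\in S$ has a $0$ in coordinate $i$, so $v_i$ is automatically distinguished from any $x\notin S$ unless $v_i$ itself behaves like a non-$S$ vertex — one must be slightly careful and handle the within-$S$ pairs, but twins in $S$ are resolved since a set $S$ containing both members of a twin pair still works, while vertices of $S$ are always distinguished from each other by the $0$-coordinates).

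For the ``constructed graphs work'' direction, I would take $H\in\mathcal{B}(G_1,G_2)$ where $G_1$ has $j\le\beta$ vertices $v_1,\dots,v_j$; by construction the adjacency of $u_b\in V(H)$ to the $v_i$ is exactly recorded by the bitstring $b$, so the $v_i$'s form a distance-$1$ resolving set of size $j$, giving $\dim_1(H)\le j\le\beta$, hence $H\in\mathcal{H}_\beta$. The content is the lower bound: one must show that if $H\in\mathcal{H}_\beta$ but $H\notin\mathcal{H}_{\beta-1}$, then no distance-$1$ resolving set of size $\beta-1$ exists — equivalently that $\dim_1(H)\ge\beta$. The cleanest route is to prove the second (``every graph is captured'') direction in the stronger form: if $\dim_1(G)=\beta$ with witnessing set $S$, then $G\in\mathcal{B}(G[S], K_{2^\beta})$ where $G[S]$ is the induced subgraph on $S$ — indeed $G$ embeds as an induced subgraph of $B(G[S],G_2)$ containing all of $S=V(G_1)$, by the bitstring description above. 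That gives $G\in\mathcal{H}_\beta$. Combined with the upper-bound direction, $\dim_1(G)=\beta$ forces $G\in\mathcal{H}_\beta$ and $G\notin\mathcal{H}_{\beta-1}$ (the latter because $G\in\mathcal{H}_{\beta-1}$ would give a distance-$1$ resolving set of size $\le\beta-1$ by the first direction); conversely $G\in\mathcal{H}_\beta-\mathcal{H}_{\beta-1}$ gives $\dim_1(G)\le\beta$ from membership in $\mathcal{H}_\beta$ and $\dim_1(G)\ge\beta$ since $\dim_1(G)\le\beta-1$ would put $G$ in $\mathcal{H}_{\beta-1}$. This is really just unwinding the definitions of $\mathcal{B}$, $B$, and $\mathcal{H}_\beta$ once the bitstring characterization of distance-$1$ codes is in hand.

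The main obstacle, and the place deserving genuine care, is the bookkeeping around vertices of $S$ and around isolated-type coordinates: one must verify that a graph $G$ with a distance-$1$ resolving set $S$ of size $\beta$ really does sit inside $B(G[S],G_2)$ \emph{as an induced subgraph containing $V(G_1)=S$}, which requires checking that distinct vertices of $V(G)\setminus S$ get distinct bitstrings (immediate from $S$ resolving) \emph{and} that one can consistently choose the ambient vertex set of $G_2$ to be all of $\{0,1\}^\beta$ (trivial, since $\mathcal{B}$ allows passing to induced subgraphs). A subtler point: when $\dim_1(G)=\beta$ exactly, we need $G\notin\mathcal{H}_{\beta-1}$, and this uses that membership in $\mathcal{H}_{\beta-1}$ is \emph{equivalent} to admitting a distance-$1$ resolving set of size $\le\beta-1$, not merely implied by it — so the first direction must be proved in the sharp form ``$G\in\mathcal{H}_{j}$ implies $\dim_1(G)\le j$'' and paired with ``$\dim_1(G)\le j$ implies $G\in\mathcal{H}_j$,'' after which the theorem follows by a short logical argument. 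I would also double-check the edge case $\beta=1$ against Theorem~\ref{adj_characterization}(a): $\mathcal{H}_1-\mathcal{H}_0=\mathcal{H}_1$ should be exactly $\{P_2,P_3\}$ up to isomorphism (plus possibly disconnected graphs, since this theorem allows them), which is a good sanity check on the definition of $\mathcal{B}(G_1,G_2)$ with $|V(G_1)|=1$.
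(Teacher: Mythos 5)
The paper does not prove this statement; it is quoted verbatim from \cite{broadcast}, so there is no in-paper argument to compare against. Your outline is the natural definitional unwinding and matches the argument behind the cited result: since $d_1(x,v)\in\{1,2\}$ for $x\neq v$ is determined by adjacency, $V(G_1)$ is automatically a distance-$1$ resolving set of every member of $\mathcal{B}(G_1,G_2)$ (distinct bitstrings give distinct codes, and the $0$-coordinates handle all pairs involving $S$, so your hedging there is unnecessary), which gives $G\in\mathcal{H}_j\Rightarrow\dim_1(G)\le j$; conversely a distance-$1$ resolving set of size $j$ exhibits $G$, up to isomorphism, as a member of $\mathcal{B}(G[S],G_2)$ for a suitable $G_2$, giving $\dim_1(G)\le j\Rightarrow G\in\mathcal{H}_j$; the theorem is the conjunction of these two implications applied at $\beta$ and $\beta-1$.

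One correction is needed in the converse direction, and it is the only substantive step there. Your claim $G\in\mathcal{B}(G[S],K_{2^\beta})$ is wrong as written: $\mathcal{B}$ consists of \emph{induced} subgraphs, so the edges among the vertices $u_b$ are inherited exactly from $G_2$, and taking $G_2=K_{2^\beta}$ would force $V(G)\setminus S$ to induce a clique (for $P_4$ with $S$ the two middle vertices, the intended embedding already fails, since the two endpoints are nonadjacent but their images would be adjacent). You must instead define $G_2$ on $\{u_b\}_{b\in\{0,1\}^{\beta}}$ by declaring $u_{b(x)}u_{b(y)}\in E(G_2)$ if and only if $xy\in E(G)$ for $x,y\in V(G)\setminus S$, with edges on unused bitstrings arbitrary; the injectivity of $x\mapsto b(x)$, which you correctly deduce from $S$ being resolving, is exactly what makes this well defined, and then $G$ is the induced subgraph of $B(G[S],G_2)$ on $S\cup\{u_{b(x)}:x\notin S\}$. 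With that fix your argument is complete. (Minor aside: your $\beta=1$ sanity check would actually turn up $K_1\in\mathcal{H}_1$ while $\dim_1(K_1)=0$; this order-one edge case is a matter of the conventions in \cite{broadcast}, not a flaw in your reasoning.)
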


By the definition of $\dim_k(G)$, Observation~\ref{obs_bounds}(a), and Theorems~\ref{dim_characterization} and~\ref{adj_characterization}, we have the following

\begin{corollary}\label{kdim_characterization}
Let $G$ be a connected graph of order $n\ge2$, and let $k$ be any positive integer. Then $1\le\dim_k(G)\le n-1$, and 
\begin{itemize}
\item[(a)] \emph{\cite{aem}} $\dim_k(G)=1$ if and only if $G \in \cup_{i=2}^{k+2}\{P_i\}$,
\item[(b)] $\dim_k(G)=n-1$ if and only if $G=K_n$.
\end{itemize}
\end{corollary}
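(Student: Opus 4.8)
The plan is to prove Corollary~\ref{kdim_characterization} by assembling the pieces already available in the excerpt, since both parts follow quickly from the stated machinery. For the bounds $1 \le \dim_k(G) \le n-1$: the lower bound is immediate because the empty set resolves no pair of distinct vertices when $n \ge 2$; the upper bound $\dim_k(G) \le \dim_1(G) \le n-1$ follows from Observation~\ref{obs_bounds}(a) together with the fact recalled just before Theorem~\ref{dim_characterization} that $\dim_1(G) \le |V(G)|-1$ for connected $G$ of order at least two. (Alternatively, Theorem~\ref{upper_diam} with $d \ge 1$ gives $\dim_k(G) \le n-1$ directly.)

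For part (a), I would simply cite \cite{aem}, as the statement says; but I would also note how it follows from the earlier results: if $G \in \cup_{i=2}^{k+2}\{P_i\}$ then $\diam(G) = i-1 \le k+1$, so some single endpoint of the path resolves $G$ in the distance-$k$ sense (this can be checked directly, or one observes $\dim(P_i) = 1$ by Theorem~\ref{dim_characterization}(a) and $\dim_k(P_i) = \dim(P_i)$ when $\diam(P_i) \le k+1$, e.g.\ via Observation~\ref{obs_diam} when $\diam \le k$, and a direct argument for $\diam = k+1$). Conversely, if $\dim_k(G) = 1$ then $\dim(G) \le \dim_k(G) = 1$ by Observation~\ref{obs_bounds}(a), so $G = P_n$ by Theorem~\ref{dim_characterization}(a); and if $n \ge k+3$ then a single vertex $u$ of $P_n$ fails to distance-$k$ resolve the two vertices at distance $k+1$ and $k+2$ from $u$ along the path (both have distance-$k$ code $k+1$), so we must have $n \le k+2$, i.e.\ $G \in \cup_{i=2}^{k+2}\{P_i\}$.

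For part (b), the forward direction is the substantive one. Suppose $\dim_k(G) = n-1$. Then $\dim_1(G) \ge \dim_k(G) = n-1$ by Observation~\ref{obs_bounds}(a) (reading the chain $\dim \le \dim_k \le \dim_1$), and since $\dim_1(G) \le n-1$ always, we get $\dim_1(G) = n-1$, whence $G = K_n$ by Theorem~\ref{adj_characterization}(b). The reverse direction is trivial: in $K_n$ every pair of vertices is a pair of twin vertices, so any distance-$k$ resolving set must meet each such pair by Observation~\ref{obs_twin}(b); taking $n-1$ of the vertices clearly resolves, while $n-2$ vertices leave two vertices $x,y$ outside $S$, and then no $z \in S$ distinguishes them since $d_k(x,z) = d_k(y,z) = 1$ for all $z \ne x,y$. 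Hence $\dim_k(K_n) = n-1$.

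There is essentially no obstacle here — the corollary is a bookkeeping consequence of Observation~\ref{obs_bounds}(a), the known bound $\dim_1(G) \le n-1$, and the cited characterizations in Theorems~\ref{dim_characterization}, \ref{adj_characterization}, and~\cite{aem}. The only place requiring a touch of care is part (a) when $\diam(G) = k+1$ exactly, where Observation~\ref{obs_diam} does not directly apply; there one should verify by hand that an endpoint of $P_{k+2}$ is a distance-$k$ resolving set (its distance-$k$ code values $1, 2, \dots, k+1$ are all distinct), and that no path longer than $P_{k+2}$ has distance-$k$ dimension $1$.
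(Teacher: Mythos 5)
Your proposal is correct and takes essentially the same route as the paper, which obtains the corollary from Observation~\ref{obs_bounds}(a) together with the characterizations in Theorems~\ref{dim_characterization} and~\ref{adj_characterization} and attributes part (a) to the cited reference; your direct twin-vertex computation of $\dim_k(K_n)=n-1$ is just a harmless variant of invoking $\dim(K_n)=n-1$ and $\dim(G)\le\dim_k(G)$. The only small imprecision is in your optional sketch of the converse of (a): for an internal vertex $u$ of $P_n$ with $n\ge k+3$ the vertices at distance $k+1$ and $k+2$ from $u$ need not exist, but in that case the two neighbours of $u$ already receive the same distance-$k$ code, so after this easy case split (and since (a) is cited anyway) the argument stands.
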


In the next result, we characterize the connected graphs $G$ of order $n$ with $\dim_k(G) = n-2$ for each $k \ge 2$. What is interesting is that these are exactly the same connected graphs $G$ of order $n$ for which $\dim(G) = n-2$. 

\begin{theorem}\label{n2charj2}
Let $G$ be a connected graph of order $n \ge 4$, and let $k \ge 2$. Then $\dim_k(G) = n-2$ if and only if $G = K_{s, t}$ with $s, t \ge 1$, $G = K_s + \overline{K_t}$ with $s \ge 1$ and $t \ge 2$, or $G = K_s + (K_1 \cup K_t)$ with $s, t \ge 1$. 
\end{theorem}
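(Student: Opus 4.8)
The plan is to prove both directions by leveraging the two characterization results already available: Theorem~\ref{dim_characterization}(b) for $\dim(G) = n-2$ and Theorem~\ref{upper_diam} for the diameter-based upper bound $\dim_k(G) \le n - \min\{d,k+1\}$. For the backward direction, suppose $G$ is one of the listed graphs. Each of $K_{s,t}$, $K_s + \overline{K_t}$, and $K_s + (K_1 \cup K_t)$ has diameter $1$ or $2$, so by Observation~\ref{obs_diam}(a) we get $\dim_k(G) = \dim(G)$ for every positive integer $k$, and then Theorem~\ref{dim_characterization}(b) gives $\dim_k(G) = n-2$ immediately. This direction is essentially free.

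For the forward direction, assume $\dim_k(G) = n-2$ with $k \ge 2$ and $n \ge 4$. First I would pin down the diameter: by Theorem~\ref{upper_diam}, $\dim_k(G) \le n - \min\{d, k+1\}$, and since $k \ge 2$ this forces $\min\{d,k+1\} \le 2$, hence $d = \diam(G) \le 2$. (The graph is connected of order $\ge 2$, so $d \ge 1$.) Now invoke Observation~\ref{obs_diam}(a): since $d \in \{1,2\}$, we have $\dim_k(G) = \dim(G)$, so $\dim(G) = n-2$. Applying Theorem~\ref{dim_characterization}(b) — which requires $n \ge 4$, exactly our hypothesis — we conclude that $G$ is one of $K_{s,t}$ ($s,t \ge 1$), $K_s + \overline{K_t}$ ($s \ge 1$, $t \ge 2$), or $K_s + (K_1 \cup K_t)$ ($s,t \ge 1$), completing the characterization.

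The whole argument is short because the heavy lifting is done by the cited results; the only genuine step is observing that the hypothesis $k \ge 2$ together with $\dim_k(G) = n-2$ forces $\diam(G) \le 2$ via Theorem~\ref{upper_diam}, after which the collapse $\dim_k = \dim$ on small-diameter graphs does the rest. I do not anticipate a real obstacle, but the point worth stating carefully in the write-up is why $k \ge 2$ is needed: if $k = 1$, then Theorem~\ref{upper_diam} only gives $\dim_1(G) \le n - \min\{d,2\}$, which is always $\le n-1$ and says nothing useful, and indeed the adjacency dimension $\dim_1$ has a genuinely different (and larger) class of extremal graphs, which is treated separately. One should also double-check the degenerate boundary cases of the listed families at $n = 4$ (e.g.\ $K_{1,3}$, $K_1 + \overline{K_3}$, $K_2 + (K_1 \cup K_1)$, $K_1 + (K_1 \cup K_2)$) to confirm they all genuinely have diameter $\le 2$ and metric dimension $n-2$, so that the backward direction has no spurious exclusions.
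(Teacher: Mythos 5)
Your proposal is correct and follows essentially the same route as the paper: use Theorem~\ref{upper_diam} with $k \ge 2$ to force $\diam(G) \le 2$, collapse $\dim_k(G)=\dim(G)$ via Observation~\ref{obs_diam}(a), and finish with Theorem~\ref{dim_characterization}(b), with the backward direction handled identically through the diameter-$2$ observation. The only cosmetic difference is that the paper cites Corollary~\ref{kdim_characterization}(b) to pin the diameter at exactly $2$, whereas you allow $d\in\{1,2\}$ and let Theorem~\ref{dim_characterization}(b) exclude $K_n$ implicitly, which is equally valid.
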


\begin{proof}
First, note that all of the graphs $G$ in the statement of the theorem have $\dim_k(G) = n-2$. This follows immediately from Observation \ref{obs_diam}(a) and the paper \cite{tree1}, since all graphs $G$ in the statement of the theorem have diameter $2$, and each of these graphs $G$ have $\dim(G) = n-2$ \cite{tree1}. This proves the backward implication of the biconditional.

Now we prove the forward implication. Suppose that $G$ is a connected graph of order $n \ge 4$ with $\dim_k(G) = n-2$. Since $k \ge 2$, the diameter of $G$ must be $2$ by Theorem \ref{upper_diam} and Corollary \ref{kdim_characterization}(b). Thus $\dim(G) = n-2$ by Observation~\ref{obs_diam}(a). Thus the result follows by Theorem~\ref{dim_characterization}(b).
\end{proof}

It is also interesting that this similarity between $\dim(G)$ and $\dim_k(G)$ breaks at $k = 1$. We just showed for $k \ge 2$ that the connected graphs $G$ of order $n \ge 4$ with $\dim(G) = n-2$ are the same as the connected graphs $G$ of order $n$ with $\dim_k(G) = n-2$. However when $k = 1$, observe that $\dim_1(P_4) = 2$ but $\dim(P_4) = 1$, so there exists a connected graph $G$ of order $n = 4$ with $\dim_1(G) = n-2$ and $\dim(G) < n-2$. In the next result, we show that this is the only connected graph $G$ of order $n$ for which $\dim_1(G) = n-2$ and $\dim(G) < n-2$. The next theorem answers an open problem from \cite{broadcast} in the case that $G$ is connected. 

\begin{theorem}\label{n2charj1}
Let $G$ be a connected graph of order $n \ge 4$. Then $\dim_1(G) = n-2$ if and only if $G = K_{s, t}$ with $s, t \ge 1$, $G = K_s + \overline{K_t}$ with $s \ge 1$ and $t \ge 2$, $G = K_s + (K_1 \cup K_t)$ with $s, t \ge 1$, or $G = P_4$. 
\end{theorem}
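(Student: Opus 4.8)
The plan is to prove the forward implication by reducing to the known characterizations. Let $G$ be connected of order $n \ge 4$ with $\dim_1(G) = n-2$. First I would use Observation~\ref{obs_diam}(a) and Theorem~\ref{dim_characterization}(b): if $\diam(G) \le 2$, then $\dim(G) = \dim_1(G) = n-2$, so $G$ is one of $K_{s,t}$, $K_s + \overline{K_t}$, or $K_s + (K_1 \cup K_t)$, as desired. So the whole problem is to analyze the case $\diam(G) \ge 3$ and show that the only such graph is $P_4$. By Theorem~\ref{upper_diam}, $\dim_1(G) \le n - \min\{d, 2\} = n-2$ whenever $d = \diam(G) \ge 2$, so equality $\dim_1(G) = n-2$ is already consistent with all diameters $\ge 2$; the point is to show it forces $d = 3$ and $n = 4$.

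Assume $d \ge 3$, fix a diametral path $u = v_0, v_1, \dots, v_d = v$, and work toward a contradiction (or toward $G = P_4$). The key observation is that the proof of Theorem~\ref{upper_diam} already hands us an explicit distance-$1$ resolving set $S_0 = V(G) - \{v_1, v_2\}$ of size $n-2$; but we want to show that unless $n = 4$ we can do \emph{better}, i.e.\ find a distance-$1$ resolving set of size $n-3$, contradicting $\dim_1(G) = n-2$. Concretely, I would try to show that $S = V(G) - \{v_1, v_2, v_3\}$ (valid since $d \ge 3$ gives $v_3 \in V(G)$) is distance-$1$ resolving whenever $n \ge 5$. The adjacency codes of $v_1, v_2, v_3$ with respect to $S$ are distinguished among themselves essentially because along the path $v_0 = u \in S$ is adjacent only to $v_1$, and $v_4$ (if $d \ge 4$) or vertices off the path handle $v_3$; the cases $d = 3$ with $n = 4$ (giving exactly $G = P_4$, since a connected graph on $4$ vertices of diameter $3$ is $P_4$) versus $d = 3$ with $n \ge 5$ versus $d \ge 4$ need to be separated. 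In the $d = 3$, $n \ge 5$ case there is an extra vertex $w \notin \{v_0,v_1,v_2,v_3\}$, and one argues from connectedness and $\diam(G) = 3$ that $w$'s adjacencies let $S = V(G) - \{v_1,v_2,v_3\}$ (or a suitable size-$(n-3)$ set) resolve the three removed vertices; in the $d \ge 4$ case the path itself already provides the needed anchors $v_0$ and $v_4$.

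The main obstacle I expect is the bookkeeping in the $d = 3$, $n \ge 5$ subcase: there one must rule out configurations where deleting any three vertices from a "good" set leaves two vertices with identical adjacency codes, and this requires using the precise structure forced by $\diam(G) = 3$ (for instance, that not all of $V(G)$ can be within distance $1$ of a single vertex, and that the extra vertices $w$ attach to the diametral path in a constrained way). A clean way to handle it may be to argue the contrapositive quantitatively: show directly that any connected $G$ with $\diam(G) \ge 3$ and $G \ne P_4$ satisfies $\dim_1(G) \le n-3$, perhaps by exhibiting, for each such $G$, three vertices whose removal leaves an adjacency-resolving set — choosing them greedily from a BFS layering rooted at $v_0$ so that each deleted vertex is still "seen" (adjacency-distinguished) by a retained neighbor in an earlier layer. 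Once $\diam(G) \ge 3$ is shown to force $G = P_4$, combining with the $\diam(G) \le 2$ case via Observation~\ref{obs_diam}(a) and Theorem~\ref{dim_characterization}(b) completes the forward implication; the backward implication is immediate since $\dim_1(P_4) = 2 = 4-2$ and the diameter-$2$ graphs listed have $\dim_1 = \dim = n-2$.
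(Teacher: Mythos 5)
Your overall skeleton matches the paper's: the backward direction, the reduction of the $\diam(G)\le 2$ case via Observation~\ref{obs_diam}(a) and Theorem~\ref{dim_characterization}(b), and the treatment of $\diam(G)\ge 4$ (where anchors at both ends of the path do give an $(n-3)$-element distance-$1$ resolving set) are all sound and essentially what the paper does. The genuine gap is exactly the case you yourself flag as "the main obstacle": $\diam(G)=3$ with $n\ge 5$. This case is the heart of the theorem, and your proposal does not prove it; it only asserts that "one argues from connectedness and $\diam(G)=3$" that some size-$(n-3)$ set works. Worse, the concrete candidate you put forward, $S=V(G)-\{v_1,v_2,v_3\}$, can fail: take $G$ on $\{v_0,v_1,v_2,v_3,w\}$ with edges $v_0v_1$, $v_1v_2$, $v_2v_3$, $wv_2$, $wv_3$. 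Here $\diam(G)=3$ and $v_0,v_1,v_2,v_3$ is a diametral path, but with respect to $S=\{v_0,w\}$ the vertices $v_2$ and $v_3$ have identical distance-$1$ codes ($d_1(v_0,v_2)=d_1(v_0,v_3)=2$ and $d_1(w,v_2)=d_1(w,v_3)=1$), so $S$ is not distance-$1$ resolving. (This graph does satisfy $\dim_1(G)=2=n-3$ via $\{v_1,v_3\}$, so it is not a counterexample to the theorem, only to your proposed set; it shows the choice of which three vertices to delete must depend on how extra vertices attach to the path, which is precisely the bookkeeping you defer.)

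The paper closes this gap with a finite case analysis that your sketch would need to reproduce in some form: fix an induced path $u,x,y,v$ realizing $d(u,v)=3$, suppose $n\ge 5$, and take a vertex $t$ outside the path adjacent to some vertex of it (such $t$ exists by connectivity). One first shows $t$ must be adjacent to $x$ or $y$ (else the diameter exceeds $3$), cannot be adjacent to both $u$ and $v$, and may be assumed nonadjacent to $v$; then for each of the remaining adjacency patterns of $t$ to $\{u,x,y\}$ an explicit set of the form $V(G)-\{\cdot,\cdot,\cdot\}$ (e.g.\ $V(G)-\{x,t,v\}$ or $V(G)-\{v,x,y\}$, chosen according to the pattern) is verified to be a distance-$1$ resolving set, contradicting $\dim_1(G)=n-2$. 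Hence no such $t$ exists, $n=4$, and $G=P_4$. Your BFS-layering "greedy" alternative is not developed enough to substitute for this; as written, the proposal identifies the right strategy but leaves the decisive case unproved.
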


\begin{proof}
First, note that all of the graphs $G$ in the statement of the theorem have $\dim_1(G) = n-2$. For all of the graphs except for $P_4$, this follows immediately from Observation \ref{obs_diam}(a) and the paper \cite{tree1}, since all graphs $G$ in the statement of the theorem besides $P_4$ have diameter $2$, and each of these graphs $G$ have $\dim(G) = n-2$ \cite{tree1}. In the case of $P_4$, clearly $\dim_1(P_4) = 2$. This proves the backward implication of the biconditional.

Now we prove the forward implication. Suppose that $G$ is a connected graph of order $n \ge 4$ with $\dim_1(G) = n-2$. Note that $G$ must have diameter at most $3$, or else $\dim_1(G) \le n-3$. To see why this is true, note that if $G$ had two vertices $u$ and $v$ with $d(u, v) = 4$, then there would exist vertices $x, y, z$ in $G$ such that $u, x, y, z, v$ is an induced path of order $5$ in $G$. Then $V(G) - \left\{u, y, v\right\}$ would be a distance-$1$ resolving set for $G$, contradicting the fact that $\dim_1(G) = n-2$, so $G$ has diameter at most $3$. 

For the first case, suppose that $G$ has diameter $3$, so there exist vertices $u$ and $v$ in $G$ with $d(u, v) = 3$. Since $d(u, v) = 3$, there must exist vertices $x, y \in V(G)$ such that $u, x, y, v$ form an induced path of order $4$ in $G$. 

For contradiction, assume that $G$ has another vertex besides $u, v, x, y$. Let $t$ be a vertex in $G$ that is not in the copy of $P_4$ such that $t$ is adjacent to some vertex in the copy of $P_4$. Note that $t$ must be adjacent to $x$ or $y$. To see why this is true, note that if $t$ was only adjacent to one of $u$ or $v$ and neither of $x$ nor $y$, then $G$ would have diameter at least $4$. If $t$ was adjacent to both $u$ and $v$, then $d(u, v) \le 2$, a contradiction. Thus, $t$ is adjacent to at most one of $u$ or $v$, and at least one of $x$ or $y$. Without loss of generality, suppose that $t$ is not adjacent to $v$.

Since $t$ is adjacent to $x$ or $y$, and $t$ is not adjacent to $v$, there are several cases to consider. For the first case, suppose that $t$ is only adjacent to a single vertex among $u, v, x, y$. This vertex must be $x$ or $y$. Without loss of generality, let $t$ be adjacent to $x$. Then $V(G)-\left\{x, t, v\right\}$ is a distance-$1$ resolving set for $G$, so $\dim_1(G) \le n-3$, a contradiction. 

Now suppose that $t$ is adjacent to two vertices among $u, v, x, y$. We know $t$ is not adjacent to $v$, so either $t$ is adjacent to $u$ and $x$, $t$ is adjacent to $u$ and $y$, or $t$ is adjacent to $x$ and $y$. If $t$ is adjacent to $u$ and $x$, then $V(G)-\left\{x, t, v\right\}$ is a distance-$1$ resolving set for $G$, so $\dim_1(G) \le n-3$, a contradiction. If $t$ is adjacent to $u$ and $y$, then $V(G)-\left\{u, x, y\right\}$ is a distance-$1$ resolving set for $G$, so $\dim_1(G) \le n-3$, a contradiction.  If $t$ is adjacent to $x$ and $y$, then $V(G)-\left\{v, x, y\right\}$ is a distance-$1$ resolving set for $G$, so $\dim_1(G) \le n-3$, a contradiction. 

Now suppose that $t$ is adjacent to three vertices among $u, v, x, y$. Since $t$ is not adjacent to $v$, $t$ must be adjacent to $u$, $x$, and $y$.  Then $V(G)-\left\{v, x, y\right\}$ is a distance-$1$ resolving set for $G$, so $\dim_1(G) \le n-3$, a contradiction. This covers all of the possible cases, since $t$ is not adjacent to $v$. Thus the only vertices in $G$ are $u, v, x, y$, and these vertices form an induced path, so $G$ is $P_4$ in the case that $G$ has diameter $3$.

Now we can assume that $G$ has diameter $2$, since we know that $G$ has diameter at most $3$, and we have already considered the case when $G$ has diameter $3$. Thus $\dim(G) = n-2$ by Observation~\ref{obs_diam}(a) and the result follows by Theorem~\ref{dim_characterization}(b).
\end{proof}

The last few results lead to a natural question. Note that the following problem is hard in general, since computing $\dim_k(G)$ is NP-hard for any positive integer $k$ \cite{eyr, NP, Juan, tree2}.

\begin{question}
Can we characterize connected graphs $G$ of order $n \ge 4$ such that $\dim_k(G)=\beta$, where $k \ge 1$ and $\beta \in \{2,3,\ldots, n-3\}$?
\end{question}

\section{Planarity and the distance-$k$ dimension}\label{s:planar}

Next, we consider the relation between $\dim_k(G)$ and planarity of $G$. A graph is \emph{planar} if it can be drawn in a plane without any edge crossing. For two graphs $G$ and $H$, $H$ is called a \emph{minor} of $G$ if $H$ can be obtained from $G$ by vertex deletion, edge deletion, or edge contraction. We recall some known results on metric dimension and its variations in conjunction with planarity of a graph.

\begin{theorem}\emph{\cite{wagner}}\label{minor}
A graph $G$ is planar if and only if neither $K_5$ nor $K_{3,3}$ is a minor of $G$.
\end{theorem}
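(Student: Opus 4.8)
The plan is to prove the two implications separately; the forward direction (planar $\Rightarrow$ no $K_5$ or $K_{3,3}$ minor) is routine, and the converse carries all the weight.

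For the forward direction, I would first record that planarity is closed under taking minors: deleting a vertex or an edge from a plane drawing leaves a plane drawing, and contracting an edge $uv$ can be carried out by sliding $u$ toward $v$ inside a small disc around the edge, which creates no new crossings. Hence it suffices to check that $K_5$ and $K_{3,3}$ are themselves non-planar, and this is the usual Euler's formula count. If a connected plane graph has $n$ vertices, $m$ edges and $f$ faces, then $n-m+f=2$; since each face is bounded by at least $3$ edges and each edge borders at most $2$ faces, $3f \le 2m$. For $K_5$ this forces $f \le \tfrac{20}{3}$, i.e. $f \le 6$, while Euler gives $f = 2-5+10 = 7$, a contradiction. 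For $K_{3,3}$, being bipartite every face is bounded by at least $4$ edges, so $4f \le 2m = 18$ gives $f \le 4$, while Euler gives $f = 2-6+9 = 5$, again a contradiction.

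For the converse I would reduce to Kuratowski's subdivision theorem. First, $G$ has a $K_{3,3}$ minor if and only if it has a $K_{3,3}$ subdivision: since $K_{3,3}$ has maximum degree $3$, each branch set of a minor model can be thinned to a tree with at most three leaves, hence to a path, yielding a genuine subdivision. Second, $G$ has a $K_5$ minor if and only if it has a $K_5$ or a $K_{3,3}$ subdivision: a degree-$4$ branch vertex of a $K_5$-model that cannot be realized as an honest degree-$4$ branch vertex can be split so as to expose a $K_{3,3}$ subdivision. So it is enough to show a non-planar graph contains a subdivision of $K_5$ or $K_{3,3}$, and I would do this by induction on $|E(G)|$ applied to a minimal counterexample $G$. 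Standard block and ear arguments show such a $G$ is $2$-connected; if it had a $2$-separation one could embed the two sides separately and glue them along the separating pair (or extract a Kuratowski subgraph from the obstruction), so $G$ is in fact $3$-connected.

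The heart of the proof, and the step I expect to be the main obstacle, is the $3$-connected case. Here I would use the fact that every $3$-connected graph on at least five vertices has an edge $e$ whose contraction $G/e$ is again $3$-connected; by minimality $G/e$ has no $K_5$ or $K_{3,3}$ subdivision (such a subgraph would lift back to one in $G$), so $G/e$ is planar. Fix a plane embedding of $G/e$; by Whitney's theorem the embedding of a $3$-connected planar graph is essentially unique, so the cyclic order of the edges at the contracted vertex $v_e$ is forced. I would then examine how the neighbours of the two endpoints of $e$ are distributed around $v_e$ and the faces incident to it, and argue that they are arranged consistently enough to re-expand $e$ without introducing a crossing, recovering a plane embedding of $G$ — contradiction. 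If the re-expansion is obstructed, the obstructing configuration of interleaved neighbours around the faces of $G/e$ yields directly a $K_5$ or $K_{3,3}$ subdivision in $G$, contradicting that $G$ was a counterexample. The delicate part is precisely this last case analysis of the possible interleavings; the connectivity reductions and the Euler's-formula input are otherwise routine.
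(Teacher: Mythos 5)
The paper does not prove this statement at all: it is Wagner's classical theorem, quoted with a citation to \cite{wagner} and used as a known tool, so there is no ``paper proof'' to compare against. Judged on its own, your outline follows the standard modern route: planarity is minor-closed, $K_5$ and $K_{3,3}$ fail Euler's inequality (your counts are correct), the minor conditions are translated into the two subdivision equivalences, and Kuratowski's theorem is then proved by taking an edge-minimal counterexample, reducing to the $3$-connected case, contracting an edge $e$ with $G/e$ still $3$-connected, and re-expanding inside the (essentially unique) plane embedding of $G/e$. This is a legitimate and well-trodden strategy, quite different from Wagner's original 1937 argument.

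However, as written it is a plan rather than a proof: the step you yourself identify as the heart of the matter --- the analysis of how the neighbours of the two ends of $e$ sit on the faces around the contracted vertex, showing that either the edge can be re-expanded planarly or an interleaving yields a $K_5$ or $K_{3,3}$ subdivision --- is only announced, not carried out, and that is exactly where all the difficulty of the theorem lives. Several auxiliary claims are also asserted without argument and are not quite immediate: that a Kuratowski subdivision in $G/e$ lifts back to one in $G$ (when the contracted vertex is a branch vertex of a $K_5$ subdivision with both ends of $e$ of degree at least three in it, one only recovers a $K_{3,3}$ subdivision, which your earlier ``splitting'' remark gestures at but does not prove); that a $K_5$ minor forces a $K_5$ or $K_{3,3}$ subdivision; the $2$-separation gluing (one must add the virtual edge on the cut pair and then trace any resulting Kuratowski subgraph back into $G$ via a path through the other side); and Tutte's lemma on contractible edges in $3$-connected graphs. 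Each of these is standard, but until the re-expansion case analysis and these lemmas are actually written out, the argument is an outline of the known proof rather than a complete one.
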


\begin{theorem}\emph{\cite{tree2}}
\begin{itemize}
\item[(a)] A graph $G$ with $\dim(G)=2$ cannot have $K_5$ or $K_{3,3}$ as a subgraph. 
\item[(b)] There exists a non-planar graph $G$ with $\dim(G)=2$.
\end{itemize}
\end{theorem}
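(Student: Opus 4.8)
The plan is to treat the two parts by unrelated methods: part (a) by a pigeonhole argument on distance vectors that uses only the fact that an edge changes a distance by at most $1$, and part (b) by an explicit construction. The two parts dovetail: (a) forbids $K_5$ and $K_{3,3}$ only as \emph{subgraphs}, so for (b) we may freely use a graph that contains one of them as a subdivision. For part (a), fix a resolving set $\{s_1,s_2\}$ of $G$ and identify each vertex $v$ with its code $(d(v,s_1),d(v,s_2))$; distinct vertices get distinct codes, and $uv\in E(G)$ implies $|d(u,s_m)-d(v,s_m)|\le 1$ for $m\in\{1,2\}$. Suppose $K_5\subseteq G$ on vertices $v_1,\dots,v_5$. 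Since these are pairwise adjacent, for each $m$ the five numbers $d(v_1,s_m),\dots,d(v_5,s_m)$ lie in a block of two consecutive integers (whether or not $s_m$ is one of the $v_i$). Hence the five codes lie in a $2\times 2$ array and cannot all be distinct --- a contradiction.

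For the $K_{3,3}$ case, suppose $K_{3,3}\subseteq G$ with parts $A=\{a_1,a_2,a_3\}$ and $B=\{b_1,b_2,b_3\}$. The key step is a trichotomy for each landmark $s\in\{s_1,s_2\}$: writing $\mathcal{A}=\{d(a,s):a\in A\}$ and $\mathcal{B}=\{d(b,s):b\in B\}$, the bound $|d(a,s)-d(b,s)|\le 1$ for all $a\in A,\,b\in B$ (each such pair is an edge of the $K_{3,3}$) gives $\max\mathcal{A}-\min\mathcal{B}\le 1$ and $\max\mathcal{B}-\min\mathcal{A}\le 1$, and inspecting the possible spans of $\mathcal{A}$ and $\mathcal{B}$ forces one of: (I) $\mathcal{A}$ is a single value; (II) $\mathcal{B}$ is a single value; (III) $\mathcal{A}\cup\mathcal{B}$ lies in two consecutive integers. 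If both $s_1$ and $s_2$ are of type (III), all six codes lie in a $2\times 2$ array, impossible. If some landmark, say $s_1$, is of type (I), then $a_1,a_2,a_3$ share a first coordinate $p$ and so have three distinct second coordinates; feeding this through the trichotomy for $s_2$ forces the second coordinate to be constant $=q$ on $B$ and the $A$-codes to be exactly $\{p\}\times\{q-1,q,q+1\}$, and then the trichotomy for $s_1$ applied to $B$ forces the $B$-codes to be exactly $\{p-1,p,p+1\}\times\{q\}$; but then $(p,q)$ is assigned both to some $a_i$ and to some $b_j$ --- a contradiction. Type (II) is symmetric. I expect the bookkeeping here --- verifying the trichotomy and then chasing the forced coordinates --- to be the only genuine obstacle in part (a).

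For part (b), I would exhibit an explicit non-planar graph with metric dimension $2$. A convenient template is a subdivision $G$ of $K_{3,3}$: replace each of its nine edges by an internally disjoint path of length at least two. Then $G$ has $K_{3,3}$ as a minor, hence is non-planar by Theorem~\ref{minor}, while every subdivision vertex has degree $2$ and the six branch vertices are pairwise nonadjacent, so $G$ contains neither $K_5$ nor $K_{3,3}$ as a subgraph (consistent with part (a)). One then picks the nine path-lengths so that two branch vertices $u,v$ in the same part of the $K_{3,3}$ form a resolving set, and verifies this by showing that no two vertices of $G$ share a code $(d(x,u),d(x,v))$. The lengths are constrained: at each branch vertex $w$, a shortest path from a vertex near $w$ to a landmark either backtracks through $w$ or leaves along $w$'s own incident subdivided edge, so telling the three edges at $w$ apart by $\{u,v\}$ forces the relevant sums of subdivided-edge-lengths at $w$ to occupy three consecutive integer values; one must check that these conditions (together with uniqueness of the $u$--$v$ shortest path) are jointly satisfiable over all six branch vertices. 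The verification of the resolving property then reduces to a finite case analysis organized by vertex type --- branch vertices; internal vertices near a common branch vertex; internal vertices deep inside one subdivided edge --- where only the branch-vertex cases actually use the length conditions. I expect satisfying the length conditions at all six branch vertices simultaneously, and then running the case check without a hidden collision, to be the main obstacle in part (b).
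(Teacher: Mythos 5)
This theorem is quoted in the paper from \cite{tree2} without proof, so there is no in-paper argument to compare against; I am judging your proposal on its own merits.

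Your part (a) is complete and correct, and it is essentially the classical pigeonhole-on-codes argument. The $K_5$ case is immediate (five codes confined to a $2\times 2$ block of values). For $K_{3,3}$, your trichotomy is sound: from $\max\mathcal{A}\le\min\mathcal{B}+1$ and $\max\mathcal{B}\le\min\mathcal{A}+1$ one gets that if neither $\mathcal{A}$ nor $\mathcal{B}$ is a singleton then both have span exactly $1$ and share the same two consecutive values, which is your type (III); and the forced-coordinate chase in type (I)/(II) correctly produces the collision at the code $(p,q)$ between a vertex of $A$ and a vertex of $B$ (the final step needs only the edge constraint $|d(b,s_1)-p|\le 1$ together with distinctness of the $B$-codes, not the full trichotomy, but that is cosmetic).

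Part (b), however, is not a proof but a plan, and the gap is exactly the content of the statement: an existence claim requires an explicit witness, and you never exhibit one. You describe the template (a subdivision of $K_{3,3}$ with two branch vertices of the same part as landmarks) and correctly identify the constraints it must satisfy --- at each degree-$3$ vertex the three neighbors must receive distinct codes, which translates into conditions on the nine path lengths, and there are additional global checks (no shortcut through a third branch vertex spoiling the intended distance formulas, no collision between vertices on different subdivided edges). But you explicitly defer both the choice of lengths and the verification that the conditions are simultaneously satisfiable, flagging them as ``the main obstacle.'' Nothing in your write-up rules out that the six local three-consecutive-values conditions interact inconsistently, or that two internal vertices on different subdivided edges end up with equal codes; until concrete lengths are fixed and the finite case analysis is actually run, the non-planar example has not been produced. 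To close the gap, either reproduce the explicit example from \cite{tree2} (which is where the paper sources this theorem) or commit to specific values of the nine lengths and carry out the distance computations for every vertex class, verifying injectivity of the code map.
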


\begin{theorem}\emph{\cite{broadcast}}\label{adim_planar} 
\begin{itemize}
\item[(a)] If $\dim_1(G)=2$, then $G$ is planar; see Figure~\ref{fig_planarity}(a) for graphs $G$ with $\dim_1(G)=2$.
\item[(b)] For each integer $\beta \ge 3$, there exists a non-planar graph $G$ with $\dim_1(G)=\beta$.
\end{itemize}
\end{theorem}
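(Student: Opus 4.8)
The plan is to handle the two parts separately, in each case leaning on an elementary counting bound on the order of a graph with prescribed distance-$1$ dimension.

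\medskip

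\emph{Part (a).} Let $\{v_1,v_2\}$ be a distance-$1$ resolving set of $G$. Every vertex $x\notin\{v_1,v_2\}$ has distance-$1$ code $(d_1(x,v_1),d_1(x,v_2))\in\{1,2\}^2$, distinct vertices receive distinct codes, and $v_1,v_2$ have codes containing a $0$; hence $|V(G)|\le 6$. Moreover the code of $x$ records exactly which of $v_1,v_2$ is a neighbor of $x$, so all edges of $G$ incident with $v_1$ or $v_2$ are pinned down, and therefore $G$ is a subgraph of the single $6$-vertex graph $M$ consisting of a $K_4$ on vertices $a_{00},a_{01},a_{10},a_{11}$ together with $v_1,v_2$, the edge $v_1v_2$, and the edges $v_1a_{10},\,v_1a_{11},\,v_2a_{01},\,v_2a_{11}$. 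It then suffices to check that $M$ is planar, which I would do by exhibiting a plane drawing: draw the $K_4$ with $a_{00}$ inside the triangle $a_{01}a_{10}a_{11}$, place $v_1$ in the outer face and join it to $a_{10},a_{11}$ (this leaves a quadrilateral face whose boundary contains $v_1,a_{01},a_{11}$), then place $v_2$ in that face and join it to $v_1,a_{11},a_{01}$. Since every graph $G$ with $\dim_1(G)=2$ is a subgraph of the planar graph $M$, it is planar; equivalently, this recovers the fact that the small finite list of graphs with $\dim_1(G)=2$ is complete and every member of it is planar.

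\medskip

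\emph{Part (b).} For $\beta\ge 4$ the complete graph $K_{\beta+1}$ already works: it contains $K_5$, so it is non-planar, and $\dim_1(K_{\beta+1})=\beta$ by Theorem~\ref{adj_characterization}(b) (or Corollary~\ref{kdim_characterization}(b)). The remaining case $\beta=3$ — and, if a uniform argument is preferred, all $\beta\ge 3$ — needs an explicit construction, for which I would build $H=H_\beta$ as follows. Take vertices $v_1,\dots,v_\beta$ and, for every nonzero string $b\in\{0,1\}^\beta$, a vertex $u_b$; join $v_i$ to $u_b$ exactly when the $i$-th bit of $b$ equals $1$; finally add the edges $v_1v_3$ and $v_2v_3$ among the $v$'s and the edges $u_{e_3}u_{e_1+e_2}$ and $u_{e_3}u_{e_1+e_2+e_3}$ among the $u$'s, where $e_i$ denotes the string with a single $1$ in position $i$. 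Then I would verify three points. First, $H$ is connected, since every $v_i$ is adjacent to $u_{\mathbf 1}$ (the all-ones vertex) and every $u_b$ is adjacent to some $v_i$. Second, $\{v_1,\dots,v_\beta\}$ is a distance-$1$ resolving set: the code of $u_b$ with respect to it recovers $b$ (its $i$-th entry is $1$ iff the $i$-th bit of $b$ is $1$, and $2$ otherwise), while the code of $v_j$ has a $0$ in position $j$; hence $\dim_1(H)\le\beta$. Third, $|V(H)|=\beta+2^\beta-1>(\beta-1)+2^{\beta-1}$, which by the sharp order bound ($\dim_1(G)=k$ forces $|V(G)|\le k+2^k$, the $j=1$ case of the maximum-order theorem proved earlier) rules out $\dim_1(H)\le\beta-1$; so $\dim_1(H)=\beta$. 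Finally, the six vertices $v_1,v_2,u_{e_3}$ on one side and $v_3,u_{e_1+e_2},u_{e_1+e_2+e_3}$ on the other span a $K_{3,3}$ — its nine edges are precisely the two added $v$-edges, the two added $u$-edges, and five cross edges forced by the bit rule — so $H$ is non-planar.

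\medskip

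The only routine work is the plane drawing in part (a) and the three short verifications in part (b). The one genuine subtlety is the design in part (b): one must arrange the same graph to (i) admit a distance-$1$ resolving set of size exactly $\beta$, (ii) have enough vertices that the order bound forces $\dim_1\ge\beta$, and (iii) still contain a $K_{3,3}$ (or $K_5$) after imposing (i)–(ii). This is exactly why the "internal" edges $v_1v_3,v_2v_3$ and $u_{e_3}u_{e_1+e_2},u_{e_3}u_{e_1+e_2+e_3}$ are added: they supply the missing $K_{3,3}$ edges, yet they are invisible to $\{v_1,\dots,v_\beta\}$ (they alter only mutual distances among the $v$'s, which stay distinguishable via the $0$-coordinate), so they cost nothing in (i)–(ii).
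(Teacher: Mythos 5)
Your proposal is correct, but note that the paper does not actually prove this statement: Theorem~\ref{adim_planar} is imported from \cite{broadcast} without proof, so your argument should be judged as a standalone proof, and as such it works. For part (a), your ``universal graph'' $M$ is exactly the maximal member of the family depicted in Figure~\ref{fig_planarity}(a) (black vertices $=v_1,v_2$, solid edges forced by the codes, dotted edges optional), so embedding every $G$ with $\dim_1(G)=2$ as a subgraph of the single planar graph $M$ and exhibiting one plane drawing is a clean way to package the same structural fact the cited source establishes by characterizing the family; the counting step ($|V(G)|\le 6$, codes in $\{1,2\}^2$ off the landmarks) and the drawing both check out. For part (b), your route differs from simply quoting a construction: $K_{\beta+1}$ handles $\beta\ge 4$ via Theorem~\ref{adj_characterization}(b), and for the uniform construction $H_\beta$ the three verifications hold --- $\{v_1,\dots,v_\beta\}$ resolves at distance $1$ because the added edges $v_1v_3,v_2v_3,u_{e_3}u_{e_1+e_2},u_{e_3}u_{e_1+e_2+e_3}$ never disturb the $0$/bit structure of the codes; the order $\beta+2^\beta-1$ exceeds $(\beta-1)+2^{\beta-1}$, so the elementary bound $|V(G)|\le m+2^m$ for $\dim_1(G)=m$ (the $j=1$ case of the maximum-order theorem in Section~\ref{s:general}) forces $\dim_1(H_\beta)=\beta$; and the six vertices $v_1,v_2,u_{e_3}$ versus $v_3,u_{e_1+e_2},u_{e_1+e_2+e_3}$ do span a $K_{3,3}$ (two $v$-edges, two $u$-edges, five bit-rule cross edges), giving non-planarity by Theorem~\ref{minor}. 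The trade-off is that your part (b) needs the order bound as an auxiliary tool, but in exchange it gives an explicit one-family construction valid for every $\beta\ge 3$ rather than an appeal to the external reference.
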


Another variant of metric dimension, called connected metric dimension was introduced in~\cite{cdim}. A resolving set $S$ of $G$ is called a \emph{connected resolving set} of $G$ if the subgraph of $G$ induced by $S$ is connected, and the \emph{connected metric dimension}, $\cdim(G)$, of $G$ is the minimum cardinality over all connected resolving sets of $G$. For the characterization of graphs $G$ with $\cdim(G)=2$, see~\cite{cdim}.

\begin{theorem}\emph{\cite{cdim}} 
\begin{itemize}
\item[(a)] If $\cdim(G)=2$, then $G$ is planar. However, there exists a non-planar graph $G$ with $\dim(G)=2$ and $\cdim(G)>2$; see Figure~\ref{fig_planarity}(b). 
\item[(b)] For each integer $\beta \ge 3$, there exists a non-planar graph $G$ with $\cdim(G)=\beta$. 
\end{itemize}
\end{theorem}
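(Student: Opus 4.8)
The plan is to prove part (a) first and then obtain the remaining two claims from it together with results already available.

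\emph{Planarity in (a).} Suppose $S=\{u,w\}$ is a connected resolving set of $G$, so $uw\in E(G)$. The triangle inequality applied to the edge $uw$ forces $|d(v,u)-d(v,w)|\le 1$ for every $v\in V(G)$, so the (injective) code $r_S(v)=(d(v,u),d(v,w))$ has one of the forms $(i,i)$, $(i+1,i)$, $(i,i+1)$. Hence each level $V_i=\{v:\min(d(v,u),d(v,w))=i\}$ has at most three vertices, which I would name $p_i$ (code $(i,i)$), $q_i$ (code $(i+1,i)$), $r_i$ (code $(i,i+1)$); here $r_0=u$, $q_0=w$, and $p_0$ does not exist. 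Comparing codes, an edge joins only vertices of the same level or of consecutive levels, and in fact the only edges that can occur are $p_iq_i,p_ir_i,q_ir_i$ within a level and $p_ip_{i+1},q_iq_{i+1},r_ir_{i+1},q_ip_{i+1},r_ip_{i+1}$ between consecutive levels. Next I would record a shortest-path fact: for $i\ge 1$, if $r_i$ exists then $r_{i-1}$ exists and $r_{i-1}r_i\in E(G)$, because the only vertex at distance $i-1$ from $u$ that can be adjacent to $r_i$ is $r_{i-1}$; symmetrically for the $q_i$. Thus the $q$'s form a path $q_0q_1\cdots$ and the $r$'s form a path $r_0r_1\cdots$.

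\emph{The embedding.} I would then exhibit an explicit planar drawing: draw the $r$-path along a horizontal line on top, the $q$-path along a horizontal line on the bottom, the edge $uw=r_0q_0$ down the left side, and each $p_i$ on a midline ``between columns $i-1$ and $i$'', so that every edge present among the $p$-path and every edge from $p_i$ to one of its at most two neighboring $r$'s above and at most two neighboring $q$'s below can be drawn inside the two horizontal strips with no crossings. The only edges not yet placed are the ``vertical'' within-level edges $q_ir_i$, which I would route as arcs through the outer face. Since $q_i$ (on the bottom path) and $r_i$ (on the top path) appear on the outer boundary in the order dictated by $i$, these arcs are nested, hence pairwise non-crossing and disjoint from the core drawing. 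Verifying there are no crossings is a finite check over the at most eight edge types, and this establishes planarity.

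\emph{Rest of (a), and (b).} By \cite{tree2} there is a non-planar graph $G$ with $\dim(G)=2$; since any connected resolving set is a resolving set, $\cdim(G)\ge\dim(G)=2$, and $\cdim(G)=2$ would make $G$ planar by the part just proved, a contradiction, so $\cdim(G)>2$. For (b), for each $\beta\ge 3$ I would take $G_\beta=K_{\beta+2}$ with one edge $v_1v_2$ subdivided into a path $v_1\,x\,v_2$: on the five vertices $v_1,v_2$ together with any three of $v_3,\dots,v_{\beta+2}$ this contains a subdivision of $K_5$, so $G_\beta$ is non-planar. The vertices $v_3,\dots,v_{\beta+2}$ are pairwise twins and $v_1,v_2$ are twins, so every resolving set has at least $(\beta-1)+1=\beta$ vertices; while $S=\{v_1,v_3,v_4,\dots,v_{\beta+1}\}$ induces a clique (so is connected) and resolves $G_\beta$ (only $v_{\beta+2}$ has the all-ones code with respect to $S$, and the other codes are easily seen to be pairwise distinct). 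Hence $\cdim(G_\beta)=\dim(G_\beta)=\beta$.

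The step I expect to be the main obstacle is the explicit planar embedding in part (a): once the level structure and the restricted adjacency pattern are in hand, the real work is organizing the drawing so that all of the optional edges — runs of $p$-spine edges and, especially, the vertical within-level edges $q_ir_i$ — coexist without crossings. The fact that the $q$'s and $r$'s form consecutive paths is precisely what lets the outer-face arcs be nested, so it is the key technical ingredient making the drawing go through.
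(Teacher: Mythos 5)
The paper never proves this theorem; it is quoted from \cite{cdim} with only the example of Figure~\ref{fig_planarity}(b) displayed, so there is no internal proof to compare against and your argument has to stand on its own --- and it does. For (a), the level decomposition is sound: since the two landmarks $u,w$ of a connected resolving set of size $2$ are adjacent, every code is of the form $(i,i)$, $(i+1,i)$ or $(i,i+1)$, each level has at most three vertices, and only the eight edge types you list can occur; the claim that the $q$'s and $r$'s form induced paths is correct (when checking which vertices at distance $i-1$ from $u$ could neighbor $r_i$, the list is $p_{i-1}$, $r_{i-1}$ \emph{and} $q_{i-2}$, the last two excluded by the second coordinate --- worth stating, though immediate). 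Placing $r_i$ at $(i,1)$, $q_i$ at $(i,-1)$, $p_i$ at $(i-\tfrac12,0)$ makes the strip edges pairwise non-crossing segments with disjoint $x$-ranges, and routing all within-level edges $q_ir_i$ around the same (say right) end of the drawing gives nested, non-crossing arcs, so the embedding goes through. Your derivation of the second half of (a) from \cite{tree2} by contraposition is clean, though it only gives existence, whereas the cited source (and the figure) exhibit a concrete graph with $\cdim(G)=3$. For (b), your graph $K_{\beta+2}$ with one edge subdivided once is precisely the construction the paper itself borrows from \cite{cdim} in Section~4 (where $\cdim(G)=\dim(G)=m$ is recorded); your twin-class lower bound of $\beta$ and the explicit connected resolving set $\{v_1,v_3,\dots,v_{\beta+1}\}$ with codes $(2,1,\dots,1)$, $(1,2,\dots,2)$, $(1,\dots,1)$ for $v_2$, $x$, $v_{\beta+2}$ are correct, so $\cdim(G_\beta)=\beta$ and $G_\beta$ is non-planar via a $K_5$ subdivision. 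In short, the proposal is a correct, self-contained proof of a result the paper only cites.
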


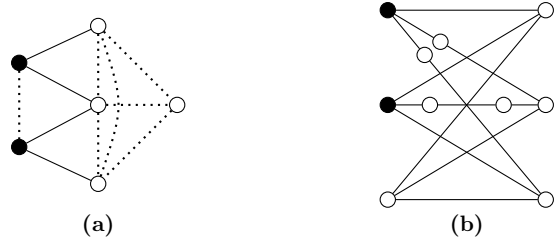
\begin{figure}[ht]
\centering
\begin{tikzpicture}[scale=.7, transform shape]

\node [draw, fill=black, shape=circle, scale=.8] (u1) at  (0, 0.8) {};
\node [draw, fill=black, shape=circle, scale=.8] (u2) at  (0, -0.8) {};
\node [draw, shape=circle, scale=.8] (1) at  (1.5, 1.5) {};
\node [draw, shape=circle, scale=.8] (2) at  (1.5, 0) {};
\node [draw, shape=circle, scale=.8] (3) at  (1.5, -1.5) {};
\node [draw, shape=circle, scale=.8] (4) at  (3, 0) {};

\node [draw, fill=black, shape=circle, scale=.8] (v1) at  (7, 1.8) {};
\node [draw, fill=black, shape=circle, scale=.8] (v2) at  (7, 0) {};
\node [draw, shape=circle, scale=.8] (11) at  (7, -1.8) {};
\node [draw, shape=circle, scale=.8] (22) at  (10, 1.8) {};
\node [draw, shape=circle, scale=.8] (33) at  (10, 0) {};
\node [draw, shape=circle, scale=.8] (44) at  (10, -1.8) {};
\node [draw, shape=circle, scale=.8] (5) at  (8, 1.2) {};
\node [draw, shape=circle, scale=.8] (6) at  (7.7, 0.95) {};
\node [draw, shape=circle, scale=.8] (7) at  (7.8, 0) {};
\node [draw, shape=circle, scale=.8] (8) at  (9.2, 0) {};

\node [scale=1.1] at (1.5,-2.3) {\textbf{(a)}};
\node [scale=1.1] at (8.5,-2.3) {\textbf{(b)}};

\draw(1)--(u1)--(2)--(u2)--(3);
\draw[thick,dotted](u1)--(u2);
\draw[thick,dotted](1)--(2)--(3).. controls (2,0) .. (1);\draw[thick,dotted](1)--(4)--(3);\draw[thick, dotted](2)--(4);

\draw(22)--(v1)--(5)--(33)--(8)--(7)--(v2)--(44)--(11);
\draw(v2)--(22)--(11);
\draw(v1)--(6)--(44);\draw(11)--(33);

\end{tikzpicture}
\caption{\small (a)~\cite{broadcast} The graphs $G$ satisfying $\dim_1(G)=2$, where black vertices must be present, a solid edge must be present whenever the two vertices incident to the solid edge are in the graph, but a dotted edge is not necessarily present; (b)~\cite{cdim} A non-planar graph $G$ with $\dim(G)=2$ and $\cdim(G)=3$, where black vertices form a minimum resolving set of $G$.}\label{fig_planarity}
\end{figure}

Now, we consider the relation between distance-$k$ dimension and planarity of graphs. 

\begin{theorem} \begin{itemize}
\item[(a)] For each $k \ge 2$, there is a non-planar connected graph $G$ with $\dim_k(G)=2$. 
\item[(b)] For each $k \ge 1$ and $\beta \ge 3$, there is a non-planar connected graph $G$ with $\dim_k(G) = \beta$. 
\end{itemize}
\end{theorem}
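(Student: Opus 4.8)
The plan is to reduce both statements to facts about ordinary metric dimension by controlling the diameter, and then invoke Observation~\ref{obs_diam} to translate a value of $\dim$ into a value of $\dim_k$. Concretely, for (a) I would find one non-planar graph with $\dim=2$ and diameter exactly $3$, and for (b) I would find, for each $\beta$, a non-planar graph of diameter $2$ with $\dim=\beta$.

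\textbf{Part (a).} I would let $G^{*}$ be the graph obtained from $K_{3,3}$, with parts $\{a_1,a_2,a_3\}$ and $\{b_1,b_2,b_3\}$, by subdividing exactly once each of the three edges of a path on four vertices in $K_{3,3}$, say the edges $a_1b_3$, $a_1b_2$, $a_2b_2$ of the path $b_3a_1b_2a_2$; call the resulting subdivision vertices $q$ (on $a_1b_3$), $p$ (on $a_1b_2$), $r$ (on $a_2b_2$). Since $G^{*}$ is a subdivision of $K_{3,3}$ it has $K_{3,3}$ as a minor, so it is non-planar by Theorem~\ref{minor}. The work then splits into: (i) an eccentricity computation showing every vertex has eccentricity at most $3$ while some pair is at distance $3$ (for instance $d(q,r)=3$), hence $\diam(G^{*})=3$; and (ii) exhibiting a distance resolving set of size $2$ — e.g. $S=\{a_2,q\}$ — and checking that the nine metric codes with respect to $S$ are pairwise distinct. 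Combined with $\dim(G^{*})\ge 2$ (as $G^{*}$ is not a path, by Theorem~\ref{dim_characterization}(a)), this gives $\dim(G^{*})=2$. Finally, since $\diam(G^{*})=3$, Observation~\ref{obs_diam}(b) yields $\dim_k(G^{*})=\dim_{2}(G^{*})=\dim(G^{*})=2$ for every $k\ge 2$, proving (a).

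\textbf{Part (b).} Here I would produce, for each $\beta\ge 3$, a non-planar graph $G_\beta$ with $\diam(G_\beta)=2$ and $\dim(G_\beta)=\beta$; then $\dim_k(G_\beta)=\dim(G_\beta)=\beta$ for all $k\ge 1$ by Observation~\ref{obs_diam}(a). For $\beta\ge 4$ I would take $G_\beta=K_{3,\beta-1}$, which contains $K_{3,3}$ as a subgraph (hence is non-planar), has diameter $2$, and satisfies $\dim(G_\beta)=(\beta-1)+3-2=\beta$ by Theorem~\ref{dim_characterization}(b). The case $\beta=3$ needs a separate small example, since Theorem~\ref{dim_characterization}(b) forces $n=5$ for a diameter-$2$ graph with $\dim=n-2=3$ and no non-planar graph on five vertices has $\dim=3$. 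I would take $G_3=K_{3,3}+e$, where $e$ joins two vertices in one part: it has diameter $2$, contains $K_{3,3}$ (so is non-planar), and has $\dim\ge 3$ because the three vertices of the other part are pairwise twins (forcing at least two of them into any resolving set by Observation~\ref{obs_twin}(a)) while the two endpoints of $e$ are twins (forcing at least one of them in); checking an explicit $3$-set resolves $G_3$ then gives $\dim(G_3)=3$. (The Petersen graph is an equally valid choice for $\beta=3$.)

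\textbf{Main obstacle.} The genuinely load-bearing computation is verifying $\dim(G^{*})=2$ in part (a): one must exhibit the correct $2$-set and check that all metric codes are distinct. A related subtlety is that the three subdivided edges must form a path — if three edges sharing a common endpoint (or otherwise not spanning four vertices) are subdivided, then twin vertices survive and force $\dim\ge 3$, so the path configuration is essential and also ensures the diameter equals $3$ rather than dropping to $2$. By contrast, every step of part (b) is routine once the diameter-$2$ graphs above are written down.
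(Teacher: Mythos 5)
Your proposal is correct, and it follows the same overall strategy as the paper --- control the diameter and transfer a known value of $\dim$ to $\dim_k$ via Observation~\ref{obs_diam} --- but with different witness graphs, verified from scratch rather than quoted. For part (a) the paper simply points to the non-planar graph of Figure~\ref{fig_planarity}(b), taken from the connected metric dimension paper, and asserts that its two black vertices form a minimum distance-$k$ resolving set for every $k\ge 2$; you instead build a subdivision $G^*$ of $K_{3,3}$ (subdividing the three edges of the path $b_3a_1b_2a_2$), and your load-bearing checks do go through: $\diam(G^*)=3$, and the codes of the nine vertices with respect to $S=\{a_2,q\}$, namely $(2,1),(0,2),(2,2),(1,2),(2,3),(1,1),(3,2),(2,0),(1,3)$ in the order $a_1,a_2,a_3,b_1,b_2,b_3,p,q,r$, are pairwise distinct, so $\dim(G^*)=2$ and Observation~\ref{obs_diam}(b) gives $\dim_k(G^*)=2$ for all $k\ge 2$. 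For part (b) the paper uses a single uniform family --- $K_{m+2}$ ($m\ge 3$) with one edge subdivided once, non-planar by Theorem~\ref{minor}, of diameter $2$, with $\dim=m$ quoted from the same reference --- whereas you use $K_{3,\beta-1}$ for $\beta\ge 4$ (with $\dim=\beta$ from Theorem~\ref{dim_characterization}(b)) plus a separate diameter-$2$ example for $\beta=3$ ($K_{3,3}$ with an edge added inside one part, or the Petersen graph; both indeed have metric dimension $3$, the former by your twin argument via Observation~\ref{obs_twin}(a)). What your route buys is self-containedness: it needs only results already stated in the paper instead of importing computations from \cite{cdim}; the cost is the extra case split at $\beta=3$ and the explicit code check in part (a), which the paper's citations avoid.
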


\begin{proof}
For the first part, an example of a non-planar graph $G$ with $\dim_k(G)=2$ is given in Figure~\ref{fig_planarity}(b), where black vertices form a minimum distance-$k$ resolving set of $G$ for each $k \ge 2$. 

For the second part, let $G$ be a graph obtained from $K_{m+2}$ ($m \ge 3$) by subdividing exactly one edge once; then $G$ is non-planar by Theorem~\ref{minor}. It was shown in~\cite{cdim} that $\cdim(G)=\dim(G)=m$. Since $\diam(G)=2$, $\dim_k(G)=\dim(G)=m$ by Observation~\ref{obs_diam}(a).~\hfill
\end{proof}


\section{The distance-$k$ dimension of some classes of graphs}\label{classes}

In this section, we determine $\dim_k(G)$ for some classes of graphs. First, we consider graphs $G$ with $\diam(G) \le 2$. For two graphs $H_1$ and $H_2$, $\diam(H_1+H_2) \le 2$; thus, by Observation~\ref{obs_diam}(a), $\dim(H_1+H_2)=\dim_k(H_1+H_2)$ for any positive integer $k$.

\begin{theorem}\emph{\cite{wheel1, wheel2}}\label{dim_wheel}
For $n \ge 3$, 
\begin{equation*}
\dim(C_n+K_1)=\left\{
\begin{array}{ll}
3 & \mbox{ if } n \in \{3,6\},\\
\lfloor \frac{2n+2}{5}\rfloor & \mbox{ otherwise.} 
\end{array}\right.
\end{equation*}
\end{theorem}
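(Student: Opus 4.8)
The plan is to write $W_n$ for $C_n+K_1$, with hub $h$ and rim cycle $v_0v_1\cdots v_{n-1}v_0$, and to work directly with the distances $d(h,v_i)=1$ and $d(v_i,v_j)=\min\{\text{(cyclic distance of }i,j),2\}$, so that every code entry lies in $\{0,1,2\}$. I would first dispose of the small cases $n\in\{3,4,5,6\}$ by hand: $W_3=K_4$, so $\dim=3$ by Theorem~\ref{dim_characterization}(c); for $n\in\{4,5\}$ exhibit an explicit resolving pair of rim vertices and note $\dim(W_n)\ge 2$ since $W_n$ is not a path (Theorem~\ref{dim_characterization}(a)); for $n=6$ exhibit a $3$-element resolving set and check that no $2$-element set (with or without $h$) resolves $W_6$.

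For the main range $n\ge 7$ the first step is a structural reduction: a minimum resolving set $S$ consists only of rim vertices. Since $h$ is at distance $1$ from every rim vertex, $h\in S$ never separates two rim vertices; and for $n\ge 7$ no two rim vertices alone resolve all rim pairs (this follows from the gap analysis below with $\beta=2$, which forces $n\le 6$). Hence if $h\in S$ then $|S|\ge 4$, and $S\setminus\{h\}$ — containing at least three rim landmarks, which also separate $h$ from every rim vertex because the hub's code is the all-$1$'s vector and a rim vertex has at most two cycle-neighbors — is a strictly smaller resolving set, contradicting minimality. In the same breath one records $\dim(W_n)\ge 3$ for $n\ge 7$.

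The core of the argument is to translate ``$S$ is a rim resolving set'' (with $\beta:=|S|\ge 3$) into a condition on the cyclic gaps $g_1,\dots,g_\beta\ge 0$ between consecutive elements of $S$, where $\sum_t g_t=n-\beta$. Computing the code of each vertex type (hub; a landmark; a gap-endpoint adjacent to exactly one landmark; the lone vertex of a size-$1$ gap, adjacent to two landmarks; a gap-interior vertex adjacent to no landmark), one checks pair by pair that the \emph{only} possible coincidences are (i) two gap-interior vertices, which both have the all-$2$'s code, and (ii) the two cycle-neighbors of a single landmark, which both have a code with a single $1$ at that landmark. This yields exactly two conditions, necessary and sufficient: $(A)$ $\sum_t \max\{g_t-2,0\}\le 1$ (equivalently, all $g_t\le 3$ and at most one $g_t=3$), and $(B)$ no two cyclically consecutive gaps are both $\ge 2$. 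The hypothesis $\beta\ge 3$ is exactly what rules out a size-$1$-gap vertex (two $1$'s) coinciding with the hub (all $1$'s); this is the one place the boundary case $n=6$, where the combinatorial bound is not tight, genuinely has to be excluded. Verifying this characterization carefully, type by type, is the main obstacle — everything else is bookkeeping.

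The final step is the extremal count. Under $(B)$ the indices $t$ with $g_t\ge 2$ form an independent set in $C_\beta$, so there are at most $\lfloor\beta/2\rfloor$ of them, and together with $(A)$ this gives
\[
\sum_t g_t \;\le\; 2\Bigl(\bigl\lfloor\tfrac{\beta}{2}\bigr\rfloor-1\Bigr)+3+\Bigl(\beta-\bigl\lfloor\tfrac{\beta}{2}\bigr\rfloor\Bigr)\;=\;\bigl\lfloor\tfrac{3\beta}{2}\bigr\rfloor+1,
\]
attained by placing the gap pattern $3,1,2,1,2,1,\dots$ around $C_\beta$. Substituting $n-\beta\le\lfloor 3\beta/2\rfloor+1$ and simplifying yields $\beta\ge\lceil(2n-2)/5\rceil=\lfloor(2n+2)/5\rfloor$, the lower bound. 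For the matching upper bound, note that any gap vector satisfying $(A)$ and $(B)$ can be decreased coordinatewise (never creating a new violation) down to any prescribed smaller total, so a gap vector with total exactly $n-\lfloor(2n+2)/5\rfloor\le\lfloor 3\lfloor(2n+2)/5\rfloor/2\rfloor+1$ exists, giving a rim resolving set of size $\lfloor(2n+2)/5\rfloor$. Combining the two bounds with the small-case check completes the proof.
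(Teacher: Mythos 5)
The paper itself gives no proof of this statement---it is quoted from \cite{wheel1,wheel2}---so there is no internal argument to compare against; measured against the standard proof, your proposal is correct and follows essentially the same route: reduce to landmark sets on the rim, characterize resolving sets by gap conditions (all gaps at most $3$, at most one gap equal to $3$, no two neighboring gaps both of size at least $2$), and optimize, which is exactly the gap-analysis technique of \cite{wheel1} that this paper itself adapts in Lemma~\ref{lem_cycle} and Theorem~\ref{kdim_cycle}. Your treatment of the delicate points is sound: the exclusion of the hub from a minimum resolving set for $n\ge 7$, the role of $\beta\ge 3$ in ruling out the coincidence of the hub's all-$1$'s code with a size-$1$-gap vertex (which is precisely what makes $n=6$ exceptional), and the closing arithmetic $\lceil(2n-2)/5\rceil=\lfloor(2n+2)/5\rfloor$ together with the attainability of every gap total up to $\lfloor 3\beta/2\rfloor+1$ all check out.
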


\begin{theorem}\emph{\cite{fan}}\label{dim_fan}
For $n \ge 1$, 
\begin{equation*}
\dim(P_n+K_1)=\left\{
\begin{array}{ll}
1 & \mbox{ if } n=1,\\
2 & \mbox{ if } n \in \{2,3\},\\
3 & \mbox{ if } n=6,\\
\lfloor \frac{2n+2}{5}\rfloor & \mbox{ otherwise.} 
\end{array}\right.
\end{equation*}
\end{theorem}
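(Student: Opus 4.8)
The plan is to translate the problem into a combinatorial statement about landmark placements on a path and then match a counting lower bound with an explicit construction, paralleling the argument for wheels behind Theorem~\ref{dim_wheel}. Write $F_n=P_n+K_1$, with $P_n$ on vertices $v_1,\dots,v_n$ and hub $u$. For $n\ge 2$ we have $\diam(F_n)\le 2$, so every metric-code entry lies in $\{0,1,2\}$ and $S$ is a resolving set of $F_n$ if and only if for all distinct $x,y$ either $\{x,y\}\cap S\ne\emptyset$ or $N(x)\cap S\ne N(y)\cap S$. Since $N(u)$ contains every path vertex, the hub separates no two path vertices, so the essential data of $S$ is the set $S\cap V(P_n)$ of path landmarks, which I regard as a subset of $\{1,\dots,n\}$ (whether $u\in S$ matters only when $|S|\le 2$). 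Assigning to each non-landmark $i$ the \emph{pattern} $\{i-1,i+1\}\cap S$, one checks that $S$ resolves $F_n$ precisely when this map is injective on the non-landmarks and no non-landmark has pattern equal to $S$ itself (the second condition being automatic once $|S|\ge 3$).

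First I would settle the exceptional values $n\in\{1,2,3,6\}$ by hand. Here $F_1=K_2$, so $\dim(F_1)=1$; the graphs $F_2$ and $F_3$ have diameter at most $2$ and are not paths, so $\dim(F_2)=\dim(F_3)=2$ by Theorem~\ref{dim_characterization}; and for $F_6$, a short case analysis with the pattern description shows that no two-element set resolves $F_6$ while some three-element set does, so $\dim(F_6)=3$. In each case the true value strictly exceeds $\lfloor\frac{2n+2}{5}\rfloor$, which is why these $n$ must be excluded.

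For $n\ge 4$ with $n\ne 6$ I would prove the lower bound $\dim(F_n)\ge\lfloor\frac{2n+2}{5}\rfloor$ via a gap analysis. Let $S$ be a minimum resolving set, write $|S|=\dim(F_n)=m$ and $p=|S\cap V(P_n)|\le m$, let the path landmarks be $v_{i_1}<\cdots<v_{i_p}$, and let $L_0,L_1,\dots,L_p$ be the lengths of the $p+1$ maximal runs of consecutive non-landmark path vertices: two boundary runs $L_0,L_p$ and $p-1$ interior runs, with $n=p+\sum_{t=0}^p L_t$. From injectivity of the patterns I would extract: (i) each boundary run has length at most $2$ and each interior run has length at most $3$; (ii) at most one non-landmark has the empty pattern, equivalently at most one run is an interior run of length $3$ or a boundary run of length $2$; and (iii) no landmark produces, on both of its sides, a run yielding a private singleton pattern equal to that landmark — which unwinds to: no two consecutive runs both have length $\ge 2$, with a boundary run already counting as ``wide'' once its length is $\ge 1$. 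These constraints on $(L_0,\dots,L_p)$ force $\sum_t L_t\le\frac{3p+2}{2}$, hence $n\le\frac{5p+2}{2}\le\frac{5m+2}{2}$, so $\dim(F_n)=m\ge\lceil\frac{2n-2}{5}\rceil=\lfloor\frac{2n+2}{5}\rfloor$ (for $n\in\{4,5\}$ this already gives the sharp value $2$). For $n=6$ this crude bound only yields $\dim(F_6)\ge 2$; the stronger value $3$ needs the extra condition that two landmarks of a resolving set are never at distance $2$, which is exactly why $n=6$ was treated separately above.

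For the matching upper bound I would exhibit, for every $n\ge 4$ with $n\ne 6$, a resolving set of size $\lfloor\frac{2n+2}{5}\rfloor$ that avoids $u$, built from a near-periodic landmark pattern whose successive runs alternate $2,1,2,1,\dots$ — equivalently, repeated blocks ``landmark, two non-landmarks, landmark, one non-landmark'' — with the two ends trimmed according to $n\bmod 5$ so that exactly $n$ vertices are used and conditions (i)--(iii) above hold; since the hub's pattern equals the whole landmark set, of size at least $3$ when $n\ge 7$, it collides with no non-landmark pattern, so the set resolves $F_n$. The main obstacle I anticipate is the bookkeeping in the lower bound — isolating condition (iii) with its boundary asymmetry and extracting the sharp inequality $\sum_t L_t\le\frac{3p+2}{2}$ from (i)--(iii) — together with organizing the upper-bound construction into the correct residue classes modulo $5$ so that every admissible $n$ is realized exactly; verifying that $n\in\{1,2,3,6\}$ are genuine exceptions rather than artifacts of the reduction is a small but necessary check.
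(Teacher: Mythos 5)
This statement is not proved in the paper at all: Theorem~\ref{dim_fan} is quoted from \cite{fan} and used as a black box (via Observation~\ref{obs_diam}(a)) to get Corollary~\ref{cor_fan1}, so there is no in-paper argument to compare yours against. Judged on its own, your sketch is sound and is essentially the same run/gap-counting technique that the paper itself uses for cycles and paths (Lemmas~\ref{lem_cycle} and~\ref{lem_path}, Theorems~\ref{kdim_cycle} and~\ref{kdim_path}) and that \cite{wheel1} uses for wheels: the diameter-$2$ reduction to ``$N(x)\cap S$ must be injective on non-landmarks,'' the necessary conditions (i)--(iii) on run lengths, and the resulting inequality $\sum_t L_t\le\frac{3p+2}{2}$ (hence $n\le\frac{5p+2}{2}$ and $\dim(F_n)\ge\lceil\frac{2n-2}{5}\rceil=\lfloor\frac{2n+2}{5}\rfloor$) all check out, and the periodic $2,1,2,1,\dots$ construction with end corrections does realize the bound for $n=4,5$ and $n\ge 7$. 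Two small cautions. First, your parenthetical that the $n=6$ obstruction is ``two landmarks of a resolving set are never at distance $2$'' is false as a general statement (optimal resolving sets of fans routinely contain landmarks at distance $2$, e.g.\ $\{v_1,v_4,v_6\}$ in $F_7$); what actually kills the size-$2$ candidates for $F_6$ is that when only two path landmarks are used and the hub is omitted, the vertex adjacent to both landmarks gets the same code as the hub --- your direct case analysis for $n=6$ covers this, so it is a slip of phrasing rather than a gap. Second, the remaining work you defer is real but routine: deriving $\sum_t L_t\le\frac{3p+2}{2}$ from (i)--(iii) (count wide runs as an independent set among the $p+1$ runs and charge the single ``empty-pattern'' budget once), and checking that the na\"{\i}ve landmark positions $\equiv 1,4\pmod 5$ need an end adjustment in some residue classes (e.g.\ for $n=10$ the set $\{v_1,v_4,v_6,v_9\}$ fails but $\{v_1,v_4,v_6,v_{10}\}$ works), which is exactly the bookkeeping you anticipated.
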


By Observation~\ref{obs_diam}(a) and Theorems~\ref{dim_wheel} and~\ref{dim_fan}, we have the following.

\begin{corollary}\label{cor_wheel1}
For any positive integer $k$ and for $n \ge 3$, 
\begin{equation*}
\dim_k(C_n+K_1)=\left\{
\begin{array}{ll}
3 & \mbox{ if } n \in \{3,6\},\\
\lfloor \frac{2n+2}{5}\rfloor & \mbox{ otherwise.} 
\end{array}\right.
\end{equation*}
\end{corollary}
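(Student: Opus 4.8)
The plan is to derive this corollary directly from the two cited theorems on metric dimension of wheels and fans, combined with Observation~\ref{obs_diam}(a). First I would observe that both $C_n + K_1$ (the wheel $W_n$) and $P_n + K_1$ (the fan $F_n$) are graphs of the form $H_1 + H_2$, hence have diameter at most $2$ — indeed, the apex vertex $K_1$ is adjacent to every other vertex, so every pair of vertices is at distance at most $2$. Actually, for $n \ge 3$ the diameter is exactly $2$, since $C_n$ on $n \ge 3$ vertices is not complete (two non-adjacent rim vertices exist when $n \ge 4$, and when $n = 3$ the graph $C_3 + K_1 = K_4$ has diameter $1$, but that edge case does not affect the formula). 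The key point is that $\diam(C_n + K_1) \le 2$ in all cases $n \ge 3$.

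Given that $\diam(C_n + K_1) \in \{1, 2\}$, Observation~\ref{obs_diam}(a) applies verbatim and yields $\dim_k(C_n + K_1) = \dim(C_n + K_1)$ for every positive integer $k$. Then I would simply substitute the value of $\dim(C_n + K_1)$ from Theorem~\ref{dim_wheel}, which is $3$ when $n \in \{3, 6\}$ and $\lfloor \frac{2n+2}{5} \rfloor$ otherwise. This gives exactly the claimed formula for $\dim_k(C_n + K_1)$. The analogous argument with Theorem~\ref{dim_fan} handles $\dim_k(P_n + K_1)$ if that is also intended, though the stated corollary as written concerns only the wheel.

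There is essentially no obstacle here: the corollary is an immediate consequence of stacking one observation on top of one known theorem, and the only thing to be careful about is confirming the diameter hypothesis of Observation~\ref{obs_diam}(a) is met — which is trivial since $H_1 + H_2$ always has diameter at most $2$, as noted explicitly in the paragraph preceding Theorem~\ref{dim_wheel}. So the proof is just the sentence: ``By Observation~\ref{obs_diam}(a) and Theorem~\ref{dim_wheel},'' followed by the displayed formula. The same pattern (diameter $\le 2$, then invoke Observation~\ref{obs_diam}(a) and the known metric-dimension value) is exactly how the authors prove it, as signalled by the lead-in sentence ``By Observation~\ref{obs_diam}(a) and Theorems~\ref{dim_wheel} and~\ref{dim_fan}, we have the following.''
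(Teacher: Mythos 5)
Your proposal is correct and is exactly the paper's argument: note that $\diam(C_n+K_1)\le 2$ (so Observation~\ref{obs_diam}(a) gives $\dim_k=\dim$) and then quote Theorem~\ref{dim_wheel}, with the $n=3$ case $K_4$ of diameter $1$ also covered by that observation. Nothing further is needed.
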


\begin{corollary}\label{cor_fan1}
For any positive integer $k$ and for $n \ge 1$, 
\begin{equation*}
\dim_k(P_n+K_1)=\left\{
\begin{array}{ll}
1 & \mbox{ if } n=1,\\
2 & \mbox{ if } n \in \{2,3\},\\
3 & \mbox{ if } n=6,\\
\lfloor \frac{2n+2}{5}\rfloor & \mbox{ otherwise.} 
\end{array}\right.
\end{equation*}
\end{corollary}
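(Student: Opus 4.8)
The final statement to prove is Corollary~\ref{cor_fan1}, which gives $\dim_k(P_n + K_1)$ for all positive integers $k$ and all $n \ge 1$.

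The plan is to deduce this corollary essentially immediately from the results already assembled, rather than computing anything directly about fan graphs. The key observation is that $P_n + K_1$ is a join of two graphs, and any join $H_1 + H_2$ has diameter at most $2$: any two vertices in $H_1$ are connected through a vertex of $H_2$, any two vertices in $H_2$ through a vertex of $H_1$, and a vertex of $H_1$ is adjacent to every vertex of $H_2$. Hence $\diam(P_n + K_1) \le 2$ for all $n \ge 1$ (with equality for $n \ge 2$, and diameter $1$ when $n = 1$ since $P_1 + K_1 = K_2$). By Observation~\ref{obs_diam}(a), whenever $\diam(G) \in \{1, 2\}$ we have $\dim_k(G) = \dim(G)$ for every positive integer $k$. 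Therefore $\dim_k(P_n + K_1) = \dim(P_n + K_1)$ for every positive integer $k$ and every $n \ge 1$.

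The remaining step is simply to substitute the known value of $\dim(P_n + K_1)$ from Theorem~\ref{dim_fan}, which gives exactly the piecewise formula in the statement: $1$ if $n = 1$, $2$ if $n \in \{2,3\}$, $3$ if $n = 6$, and $\lfloor \frac{2n+2}{5} \rfloor$ otherwise. There is nothing further to verify, since the corollary's formula is literally the formula of Theorem~\ref{dim_fan} with $\dim$ replaced by $\dim_k$. (An entirely parallel two-line argument yields Corollary~\ref{cor_wheel1} from Theorem~\ref{dim_wheel}, using that $C_n + K_1$ is also a join and hence has diameter at most $2$.)

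Honestly, there is no real obstacle here — the content of the corollary is entirely carried by Theorem~\ref{dim_fan} and Observation~\ref{obs_diam}(a), which are cited results. The only thing one must be slightly careful about is the degenerate case $n = 1$: there $P_1 + K_1 = K_2$ has diameter $1$, so one should invoke the $d = 1$ branch of Observation~\ref{obs_diam}(a) rather than the $d = 2$ branch; either way the conclusion $\dim_k = \dim$ holds, and $\dim(K_2) = 1$ matches the stated value. So the proof is just: observe $\diam(P_n+K_1)\le 2$ because it is a join; apply Observation~\ref{obs_diam}(a); quote Theorem~\ref{dim_fan}.
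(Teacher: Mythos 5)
Your proposal is correct and matches the paper's own derivation: the paper also notes that $\diam(H_1+H_2)\le 2$ for any join, applies Observation~\ref{obs_diam}(a), and then quotes Theorem~\ref{dim_fan}. Your extra care with the $n=1$ case ($P_1+K_1=K_2$, diameter $1$) is harmless and also covered by the same observation.
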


The metric dimension of complete multi-partite graphs was determined in~\cite{kpartite}.

\begin{theorem}\emph{\cite{kpartite}}\label{adim_kpartite}
For $m \ge 2$, let $G=K_{a_1, a_2, \ldots, a_m}$ be a complete $m$-partite graph of order $n=\sum_{i=1}^{m}a_i$. Let $s$ be the number of partite sets of $G$ consisting of exactly one element. Then 
\begin{equation*}
\dim(G)=\left\{
\begin{array}{ll}
n-m & \mbox{ if } s=0,\\
n-m+s-1 & \mbox{ if } s \neq 0.
\end{array}\right.
\end{equation*}
\end{theorem}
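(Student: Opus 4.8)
The plan is to bound $\dim(G)$ from below and above by matching quantities, using the twin structure of a complete multipartite graph. First I would record the distance function: for distinct $x,y\in V(G)$, $d(x,y)=1$ if $x$ and $y$ lie in different partite sets and $d(x,y)=2$ if they lie in the same partite set. I would then observe that $V(G)$ decomposes into pairwise disjoint families of mutually twin vertices: each partite set $V_i$ with $a_i\ge 2$ is a family of mutual twins, and the union $W$ of all $s$ singleton partite sets is also a family of mutual twins, since any two vertices lying in distinct singleton parts are adjacent to all other vertices and hence satisfy $N(u)-\{w\}=N(w)-\{u\}$. These families together cover $V(G)$.

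For the lower bound I would invoke Observation~\ref{obs_twin}(a): in a family of $t$ mutually twin vertices, any resolving set omits at most one of them, hence contains at least $t-1$ of them. Summing over the disjoint families above, any resolving set $S$ satisfies $|S|\ge \sum_{i:\,a_i\ge 2}(a_i-1)+\max\{s-1,0\}$. Since $\sum_{i:\,a_i\ge 2}(a_i-1)=\sum_{i=1}^m(a_i-1)=n-m$, this gives $\dim(G)\ge n-m$ when $s=0$ and $\dim(G)\ge n-m+s-1$ when $s\ge 1$.

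For the upper bound I would exhibit an explicit resolving set. Fix a representative $v_i\in V_i$ for each $i$ and let $S_0=\bigcup_{i=1}^m(V_i\setminus\{v_i\})$. If $s=0$, take $S=S_0$; if $s\ge 1$, fix one singleton vertex $w$ and take $S=S_0\cup(\{v_i:|V_i|=1\}\setminus\{w\})$. Then $|S|=n-m$ or $n-m+s-1$ as required, so it suffices to check that $S$ resolves $G$. Given distinct $x,y$: if either belongs to $S$, the coordinate of that landmark distinguishes them (value $0$ versus a positive value); otherwise $x$ and $y$ are both representatives and at most one of them is a singleton representative, so some $V_i\setminus\{v_i\}\subseteq S$ is nonempty with, say, $x\in V_i$, and a landmark $u\in V_i\setminus\{v_i\}$ gives $d(x,u)=2\ne 1=d(y,u)$; a symmetric argument handles the remaining subcase.

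The step I expect to be the main obstacle is the lower bound when $s\ge 1$: one has to recognize that vertices in distinct singleton parts, although mutually \emph{adjacent}, are still twins in the sense of the paper, and therefore force an extra $s-1$ landmarks via Observation~\ref{obs_twin}(a); and one must combine the per-family bounds only over pairwise disjoint twin families to avoid miscounting. The verification that the explicit set is resolving is then routine given the three possible distance values $0,1,2$. The formula can be sanity-checked on $G=K_n$ (all parts singletons, giving $\dim=n-1$) and $G=K_{1,t}$ (giving $\dim=t-1$).
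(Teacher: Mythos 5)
Your proposal is correct. Note, however, that the paper does not prove this statement at all: Theorem~\ref{adim_kpartite} is quoted from the reference \cite{kpartite} and used as a black box (only its corollary for $\dim_k$ via Observation~\ref{obs_diam}(a) is drawn in the text), so there is no internal proof to compare against. Your argument is the standard twin-counting derivation and it checks out: the distance function takes only the values $1$ (different parts) and $2$ (same part), each part of size at least $2$ is a class of mutual non-adjacent twins, the union of the $s$ singleton parts is a class of mutual adjacent twins, and these classes are pairwise disjoint, so summing the ``at most one omitted vertex per twin class'' bound from Observation~\ref{obs_twin}(a) gives exactly $n-m$ when $s=0$ and $n-m+s-1$ when $s\ge 1$ (the case $s=1$ correctly contributing nothing extra). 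Your explicit set of that size resolves $G$ because any two unchosen vertices lie in distinct parts and at most one of them is the leftover singleton, so a co-member of a size-$\ge 2$ part distinguishes them via the values $2$ versus $1$; the degenerate case $G=K_n$ (all parts singletons) is harmless since then only one vertex is unchosen. This is essentially the argument in the cited literature, just phrased through the paper's twin observation.
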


As an immediate consequence of Observation~\ref{obs_diam}(a) and Theorem~\ref{adim_kpartite}, we have the following.

\begin{corollary}\label{kdim_kpartite}
For $m \ge 2$, let $G=K_{a_1, a_2, \ldots, a_m}$ be a complete $m$-partite graph of order $n=\sum_{i=1}^{m}a_i$. Let $s$ be the number of partite sets of $G$ consisting of exactly one element. Then, for any positive integer $k$,  
\begin{equation*}
\dim_k(G)=\left\{
\begin{array}{ll}
n-m & \mbox{ if } s=0,\\
n-m+s-1 & \mbox{ if } s \neq 0.
\end{array}\right.
\end{equation*}
\end{corollary}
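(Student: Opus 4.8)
The statement to prove is Corollary~\ref{kdim_kpartite}: for $m \ge 2$ and $G = K_{a_1,\dots,a_m}$ with $s$ singleton partite sets, $\dim_k(G) = n-m$ if $s=0$ and $n-m+s-1$ if $s \neq 0$, for every positive integer $k$.

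The plan is to deduce this immediately from two facts already available in the excerpt. First, any complete multipartite graph $K_{a_1,\dots,a_m}$ with $m \ge 2$ has diameter at most $2$: two vertices in different partite sets are adjacent, and two vertices in the same partite set have a common neighbor in any other partite set (which exists since $m \ge 2$). In fact the diameter is exactly $2$ unless the graph is complete (i.e.\ all $a_i = 1$), in which case it is $1$; in either case $\diam(G) \in \{1,2\}$. Second, by Observation~\ref{obs_diam}(a), whenever $\diam(G) \in \{1,2\}$ we have $\dim_k(G) = \dim(G)$ for every positive integer $k$. Combining these with the known value of $\dim(G)$ from Theorem~\ref{adim_kpartite} yields the claimed formula verbatim.

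So the proof is essentially one line: invoke Observation~\ref{obs_diam}(a) using $\diam(G) \le 2$, then substitute the value of $\dim(G)$ given by Theorem~\ref{adim_kpartite}. The only step requiring a sentence of justification is the diameter bound, and even that is routine — it follows from $m \ge 2$ as described above. Concretely I would write: ``Since $m \ge 2$, we have $\diam(G) \le 2$ (any two vertices in distinct partite sets are adjacent, and any two vertices in the same partite set share a neighbor in another partite set). By Observation~\ref{obs_diam}(a), $\dim_k(G) = \dim(G)$ for every positive integer $k$, and the result follows from Theorem~\ref{adim_kpartite}.''

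There is no real obstacle here; this is a corollary in the literal sense, and the excerpt even announces it as ``an immediate consequence of Observation~\ref{obs_diam}(a) and Theorem~\ref{adim_kpartite}.'' If one wanted to be maximally careful, the single point to double-check is the edge case where every partite set is a singleton: then $G = K_n$, $\diam(G) = 1$, Observation~\ref{obs_diam}(a) still applies, and $s = m = n$, so the formula $n - m + s - 1 = n - 1$ correctly reproduces $\dim(K_n) = n-1$. With that consistency check noted, the proof is complete.
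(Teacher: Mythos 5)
Your proposal is correct and matches the paper exactly: the paper derives this corollary as an immediate consequence of Observation~\ref{obs_diam}(a) (since $\diam(G)\le 2$ for a complete multipartite graph with $m\ge 2$) together with Theorem~\ref{adim_kpartite}. Your additional check of the all-singleton case $G=K_n$ is a harmless extra verification and does not change the argument.
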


Now, we recall the metric dimension of the Petersen graph.

\begin{theorem}\emph{\cite{petersen}}\label{dim_petersen}
For the Petersen graph $\mathcal{P}$, $\dim(\mathcal{P})=3$.
\end{theorem}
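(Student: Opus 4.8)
The plan is to establish the two inequalities $\dim(\mathcal{P}) \ge 3$ and $\dim(\mathcal{P}) \le 3$ separately, exploiting the fact that $\mathcal{P}$ is a $3$-regular graph on $10$ vertices with $\diam(\mathcal{P})=2$ in which adjacent vertices have no common neighbor and non-adjacent vertices have exactly one. It is convenient to use the Kneser model, in which the vertices of $\mathcal{P}$ are the ten $2$-subsets of $\{1,2,3,4,5\}$ and two vertices are adjacent exactly when they are disjoint; then $d(A,B)=1$ when $A\cap B=\emptyset$ and $d(A,B)=2$ when $|A\cap B|=1$.

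For the lower bound, I would argue by a pigeonhole count that exploits $\diam(\mathcal{P})=2$. If $S=\{u,v\}$ were a resolving set of size $2$, then each of the eight vertices $w\notin S$ satisfies $d(w,u),d(w,v)\in\{1,2\}$, so its metric code $r_S(w)=(d(w,u),d(w,v))$ lies in the four-element set $\{1,2\}^2$. Since $8>4$, two distinct vertices outside $S$ would receive identical codes, contradicting that $S$ resolves $\mathcal{P}$. Hence $\dim(\mathcal{P})\ge 3$.

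For the upper bound, I would exhibit an explicit resolving set of size $3$. Because $\diam(\mathcal{P})=2$, the code of any vertex with respect to $S=\{a,b,c\}$ is determined purely by adjacency: the relevant coordinate is $0$, $1$, or $2$ according as the vertex equals, is adjacent to, or is non-adjacent to the corresponding element of $S$. Thus two vertices outside $S$ collide precisely when they have the same neighbors inside $S$, and the task reduces to choosing $a,b,c$ so that the seven vertices of $V(\mathcal{P})\setminus S$ have pairwise distinct adjacency patterns in $\{0,1\}^3$. A short double count guides the choice: if $e$ denotes the number of edges inside $S$, then the number of edges from $S$ to the outside is $9-2e$, and for the seven patterns to be distinct (hence to realize seven of the eight subsets of $S$) the omitted pattern must have size $3+2e$, forcing $e=0$ and the omitted pattern to be $(1,1,1)$. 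This singles out the candidates: three pairwise non-adjacent vertices with no common neighbor. Taking $a=\{1,2\}$, $b=\{1,3\}$, $c=\{1,4\}$ (all containing the element $1$, hence pairwise at distance $2$), a direct check confirms that the seven remaining vertices realize every adjacency pattern except $(1,1,1)$, so their codes are distinct; the three vertices of $S$ have codes with a $0$ in distinct coordinates, so they are distinct from each other and from the rest. Therefore $S$ resolves $\mathcal{P}$ and $\dim(\mathcal{P})\le 3$.

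The lower bound is essentially immediate once $\diam(\mathcal{P})=2$ is in hand, so the main work lies in the upper bound. The only genuine obstacle there is that not every $3$-subset resolves $\mathcal{P}$: by the counting identity above, any triple containing an edge, or any triple possessing a common neighbor, forces a code collision, so one must select the correct symmetry class of triples. The explicit set $\{\{1,2\},\{1,3\},\{1,4\}\}$ avoids both defects, and the finite verification of its ten codes completes the argument.
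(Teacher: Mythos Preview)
Your argument is correct. The pigeonhole lower bound is clean and the Kneser model makes the upper bound transparent; the double count $9-2e=12-m$ is a nice touch that explains \emph{why} the resolving triple must be an independent set with no common neighbor, rather than merely verifying one example.

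Note, however, that the paper does not supply its own proof of this theorem: it is quoted as a known result from \cite{petersen} and immediately combined with Observation~\ref{obs_diam}(a) to obtain the corollary $\dim_k(\mathcal{P})=3$. So there is no in-paper proof to compare against; your write-up stands as a complete, self-contained justification that the paper simply outsources to the literature.
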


Since $\diam(\mathcal{P})=2$, Observation~\ref{obs_diam}(a) and Theorem~\ref{dim_petersen} imply the following

\begin{corollary}
For the Petersen graph $\mathcal{P}$ and for any positive integer $k$, $\dim_k(\mathcal{P})=3$.
\end{corollary}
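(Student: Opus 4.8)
The final statement to prove is the corollary that for the Petersen graph $\mathcal{P}$ and any positive integer $k$, $\dim_k(\mathcal{P})=3$. The plan is extremely short because all the heavy lifting has already been done in the results stated earlier in the excerpt. I would simply invoke Observation~\ref{obs_diam}(a), which asserts that if a connected graph $G$ has diameter $d\in\{1,2\}$, then $\dim_k(G)=\dim(G)$ for every positive integer $k$, together with Theorem~\ref{dim_petersen}, which gives $\dim(\mathcal{P})=3$. The only thing to verify independently is the diameter hypothesis: the Petersen graph is vertex-transitive, $3$-regular on $10$ vertices, has girth $5$, and any two nonadjacent vertices have a common neighbor, so $\diam(\mathcal{P})=2$. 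This is a well-known fact and the excerpt explicitly states ``Since $\diam(\mathcal{P})=2$'' just before the corollary, so in the write-up I would treat it as given or recall it in one sentence.

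Putting it together: by Observation~\ref{obs_diam}(a) applied with $d=\diam(\mathcal{P})=2$, we get $\dim_k(\mathcal{P})=\dim(\mathcal{P})$ for every positive integer $k$; by Theorem~\ref{dim_petersen}, $\dim(\mathcal{P})=3$; hence $\dim_k(\mathcal{P})=3$ for all $k$. There is no genuine obstacle here — this is a one-line consequence corollary, exactly parallel in structure to Corollaries~\ref{cor_wheel1}, \ref{cor_fan1}, and~\ref{kdim_kpartite}, each of which packages a known metric-dimension formula through Observation~\ref{obs_diam}(a) for graphs of small diameter. If anything, the ``hard part'' is purely bookkeeping: making sure the diameter-$2$ claim for $\mathcal{P}$ is stated (or cited) so that the hypothesis of Observation~\ref{obs_diam}(a) is visibly satisfied. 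A proof sketch such as the following would suffice:

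\begin{proof}
Since the Petersen graph $\mathcal{P}$ is connected with $\diam(\mathcal{P})=2$, Observation~\ref{obs_diam}(a) gives $\dim_k(\mathcal{P})=\dim(\mathcal{P})$ for every positive integer $k$. By Theorem~\ref{dim_petersen}, $\dim(\mathcal{P})=3$, and hence $\dim_k(\mathcal{P})=3$ for every positive integer $k$.
\end{proof}

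In short, I would not attempt anything more elaborate: the result is an immediate corollary, and the appropriate proof is a two-sentence deduction citing Observation~\ref{obs_diam}(a) and Theorem~\ref{dim_petersen}, after noting (or recalling the standard fact) that $\diam(\mathcal{P})=2$.
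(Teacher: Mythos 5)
Your proposal is correct and matches the paper's argument exactly: the paper derives the corollary from the fact that $\diam(\mathcal{P})=2$, Observation~\ref{obs_diam}(a), and Theorem~\ref{dim_petersen}, precisely as you do. Nothing further is needed.
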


Next, we determine the distance-$k$ dimension of cycles. We recall the following results.

\begin{proposition}\emph{\cite{unicyclic}}\label{dim_cycle}
For $n \ge 3$, $\dim(C_n)=2$.
\end{proposition}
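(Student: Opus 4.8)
The plan is to prove the two inequalities $\dim(C_n) \ge 2$ and $\dim(C_n) \le 2$ separately. For the lower bound, the quickest route is to invoke Theorem~\ref{dim_characterization}(a), which states that $\dim(G) = 1$ if and only if $G = P_n$. Since $C_n$ is not a path (every vertex of $C_n$ has degree $2$, whereas a path of order $\ge 2$ has two vertices of degree $1$), we conclude $\dim(C_n) \neq 1$; and since $C_n$ is connected of order $n \ge 3$, we have $\dim(C_n) \ge 1$. Hence $\dim(C_n) \ge 2$. Alternatively, one can argue directly that no single vertex resolves $C_n$: the reflection of the cycle fixing any chosen vertex $u$ is a nontrivial automorphism, so it sends some vertex $x$ to a distinct vertex $x'$ with $d(u, x) = d(u, x')$, whence $\{u\}$ fails to resolve $\{x, x'\}$.

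For the upper bound, I would exhibit an explicit resolving set of size $2$. Label the vertices $v_0, v_1, \ldots, v_{n-1}$ cyclically and take $S = \{v_0, v_1\}$. Using the standard cycle distance formula $d(v_i, v_j) = \min\{|i-j|, n - |i-j|\}$, the key observation is that for distinct indices $a, b$ one has $d(v_a, v_0) = d(v_b, v_0)$ if and only if $a + b \equiv 0 \pmod n$, and $d(v_a, v_1) = d(v_b, v_1)$ if and only if $a + b \equiv 2 \pmod n$. If some pair of distinct vertices $v_a, v_b$ were not resolved by $S$, both congruences would hold simultaneously, forcing $0 \equiv 2 \pmod n$, i.e.\ $n \mid 2$; this is impossible for $n \ge 3$. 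Hence every pair of distinct vertices is resolved by $S$, so $\dim(C_n) \le 2$. Combining the two bounds yields $\dim(C_n) = 2$ for all $n \ge 3$.

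The only step requiring genuine care is the distance-collision characterization in the upper bound: I would verify that $d(v_a, v_0) = d(v_b, v_0)$ reduces to the set equality $\{a \bmod n,\, (n-a) \bmod n\} = \{b \bmod n,\, (n-b) \bmod n\}$, so that for $a \neq b$ it is equivalent to $b \equiv -a \pmod n$; the analogous statement for the landmark $v_1$ follows by shifting indices by $1$. Everything else is a short symmetry observation or a direct appeal to Theorem~\ref{dim_characterization}(a). The main (very mild) obstacle is simply handling the boundary behavior of the $\min$ in the distance formula cleanly — in particular the antipodal vertex when $n$ is even — but the set-equality reformulation sidesteps any delicate case analysis, since it encodes the reflective symmetry about each landmark uniformly.
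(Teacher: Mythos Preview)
The paper does not include a proof of this proposition; it is stated as a cited result from~\cite{unicyclic} and used as a known fact. Your proposal is a complete and correct proof: the lower bound via Theorem~\ref{dim_characterization}(a) is immediate, and your upper-bound argument with $S=\{v_0,v_1\}$ is clean and fully rigorous---the congruence characterization of distance collisions from each landmark is exactly right, and the resulting contradiction $n\mid 2$ is airtight for $n\ge 3$. So there is nothing to compare against here, but your argument stands on its own.
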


\begin{proposition}\emph{\cite{adim}}\label{adim_c}
For $n \ge 4$, $\dim_1(C_n)=\lfloor \frac{2n+2}{5}\rfloor$.
\end{proposition}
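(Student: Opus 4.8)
The plan is to prove both inequalities through a ``gap decomposition'' of a landmark set on the cycle. Fix $C_n$ with vertex set $\{0,1,\dots,n-1\}$ in cyclic order, and recall that for vertices $v,z$ of $C_n$ we have $d_1(v,z)=0$ if $v=z$, $d_1(v,z)=1$ if $vz\in E(C_n)$, and $d_1(v,z)=2$ otherwise. Consequently the distance-$1$ code $r_{1,S}(v)$ is completely determined by which elements of $S$ (at most two, since $v$ has only two neighbors) are equal or adjacent to $v$. Three facts follow at once: every vertex of $S$ is resolved by $S$ (it is the unique vertex with a $0$ in the corresponding coordinate); two distinct vertices outside $S$ receive the same code if and only if they have exactly the same neighbors in $S$; and at most one vertex may receive the all-$2$'s code, namely a vertex at distance $\geq 2$ from all of $S$.

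First I would establish a combinatorial description of the distance-$1$ resolving sets. Let $S$ be such a set with $|S|=m$, list its members cyclically, and let $g_1,\dots,g_m$ be the gap sizes (numbers of consecutive non-landmarks between successive landmarks), so $\sum_j g_j=n-m$. Reading off, for $m\geq 3$, the codes of the vertices inside a single gap, one checks: a gap of size $0$ or $1$ creates no all-$2$'s vertex (a unit gap yields one vertex adjacent to both of its bounding landmarks); a gap of size $2$ yields two vertices, each adjacent to exactly one bounding landmark; a gap of size $3$ yields one all-$2$'s vertex together with two ``single-landmark'' vertices; and a gap of size $\geq 4$ yields at least two all-$2$'s vertices. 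Since at most one all-$2$'s vertex is allowed, every $g_j\leq 3$ and at most one $g_j$ equals $3$; moreover, if two cyclically consecutive gaps both had size $\geq 2$, the last vertex of the first and the first vertex of the second would both be adjacent to exactly the common bounding landmark, a collision, so no two cyclically consecutive gaps both have size $\geq 2$. Conversely, a short verification (using that for $m\geq 3$ the consecutive landmark pairs $\{s_j,s_{j+1}\}$ are pairwise distinct) shows that any gap pattern obeying these three conditions arises from a distance-$1$ resolving set. The cases $m\leq 2$, which matter only for $n\leq 6$ since $n\geq 4$, I would treat by hand; here one must note the extra subtlety that, when $m=2$, two unit gaps both yield the code $(1,1)$ and hence collide.

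Next I would optimize. Writing $A=\{j:g_j\geq 2\}$, the last condition says $A$ is an independent set in the cyclic order on $\{1,\dots,m\}$, so $|A|\leq\lfloor m/2\rfloor$, while the first two conditions give $\sum_{j\in A}g_j\leq 2|A|+1$ and $\sum_{j\notin A}g_j\leq m-|A|$. Adding these, $n-m=\sum_j g_j\leq m+\lfloor m/2\rfloor+1=\lfloor 3m/2\rfloor+1$, that is, $n\leq\lfloor 5m/2\rfloor+1$ (this also holds, by the direct check, for $m\leq 2$). An elementary case analysis on $n$ modulo $5$ then shows that the smallest integer $m\geq 1$ satisfying $n\leq\lfloor 5m/2\rfloor+1$ is exactly $\lfloor\frac{2n+2}{5}\rfloor$ — where $n\geq 4$ guarantees this value is at least $2$ — giving the lower bound $\dim_1(C_n)\geq\lfloor\frac{2n+2}{5}\rfloor$.

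For the matching upper bound, put $m=\lfloor\frac{2n+2}{5}\rfloor$; from $n\geq 4$ one gets $m\leq n/2$, hence $0\leq n-2m\leq\lfloor m/2\rfloor+1$. When $n\geq 7$ (so $m\geq 3$) I would build the landmark set by taking all $m$ gaps equal to $1$ and then enlarging gaps to reach the required sum $n-m$: fix an independent set of $\lfloor m/2\rfloor$ gaps in the cyclic order, raise $\min\{n-2m,\lfloor m/2\rfloor\}$ of them from $1$ to $2$, and, if the sum is still one short, raise one of those further to $3$; the inequality $n-2m\leq\lfloor m/2\rfloor+1$ makes this possible, and the resulting gap pattern obeys the three gap conditions, so the corresponding $S$ is a distance-$1$ resolving set of size $m$. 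For $n\in\{4,5,6\}$ (where $m=2$) one exhibits a suitable $2$-set directly (for instance two adjacent vertices when $n=4$). I expect the structural step to be the main obstacle: pinning down exactly which codes occur in each gap and proving that the three gap conditions are sufficient as well as necessary, with the small-$m$ anomaly ($m=2$, where a gap is cyclically adjacent to itself and two unit gaps coincide in code) handled carefully so that no collision type is overlooked. The optimization and the arithmetic identifying the extremal $m$ with $\lfloor\frac{2n+2}{5}\rfloor$ are then routine; as a sanity check one can compare the formula with the known value of $\dim(C_n+K_1)$ from Theorem~\ref{dim_wheel}, since rim-to-rim distances in the wheel coincide with $d_1$ in $C_n$, the only discrepancy (the exceptional case $n=6$ for the wheel) being caused by the hub vertex.
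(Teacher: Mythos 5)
Your proof is correct and follows essentially the same route as the paper: the paper does not prove this cited proposition directly but notes it is the $k=1$ case of Theorem~\ref{kdim_cycle}, whose proof rests on exactly your gap analysis --- Lemma~\ref{lem_cycle} specialized to $k=1$ gives your three gap conditions (gaps of size at most $3$, at most one gap of size $3$, no two neighboring large gaps), and the parity-based counting and explicit landmark placements in Theorem~\ref{kdim_cycle}(b) correspond to your optimization over the set $A$ and your construction for the upper bound. Your extra step of proving the gap conditions sufficient plays the role of the paper's explicit Cases 1--3, so the arguments match in substance.
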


Following~\cite{wheel1}, let $M$ be a set of at least two vertices of $C_n$, let $u_i$ and $u_j$ be distinct vertices of $M$, and let $P$ and $P'$ denote the two distinct $u_i-u_j$ paths determined by $C_n$. If either $P$ or $P'$, say $P$, contains only two vertices of $M$ (namely, $u_i$ and $u_j$), then we refer to $u_i$ and $u_j$ as \emph{neighboring vertices} of $M$ and the set of vertices of $P$ that belong to $C_n -\{u_i,u_j\}$ as the \emph{gap} of $M$ (determined by $u_i$ and $u_j$). The two gaps of $M$ determined by a vertex of $M$ and its two neighboring vertices of $M$ are called \emph{neighboring gaps}. Note that, $M$ has $r$ gaps if $|M|=r$, where some of the gaps may be empty.

\begin{lemma}\label{lem_cycle}
For a positive integer $k$, let $M_k$ be a minimum distance-$k$ resolving set of $C_n$ for $n \ge 2k+3$. Then
\begin{itemize}
\item[(a)] Every gap of $M_k$ contains at most $2k+1$ vertices. Moreover, at most one gap of $M_k$ contains $2k+1$ vertices.
\item[(b)] If a gap of $M_k$ contains at least $k+1$ vertices, then any neighboring gaps contain at most $k$ vertices.
\end{itemize}
\end{lemma}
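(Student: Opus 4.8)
The plan is to analyze distance-$k$ codes of vertices of $C_n$ that lie inside gaps of $M_k$, using the fact that a vertex deep inside a long gap is ``far'' (distance $\ge k+1$) from every landmark on one side, and that two such vertices symmetric about the gap's midpoint become indistinguishable. First I would fix notation: label $M_k = \{u_{i_1}, \dots, u_{i_r}\}$ in cyclic order, and for a gap $I$ of size $g$ bounded by neighboring vertices $u_p, u_q \in M_k$, write the internal vertices as $x_1, x_2, \dots, x_g$ along $I$ from $u_p$ to $u_q$. For any landmark $z \in M_k$, the shortest $x_j$--$z$ path either exits the gap through $u_p$ or through $u_q$; so $d(x_j, z) = \min\{j + d(u_p, z),\, (g+1-j) + d(u_q, z)\}$, where the two distances on the right are measured in $C_n$ from the gap's endpoints. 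This is the computational backbone for both parts.

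For part (a), suppose for contradiction that some gap $I$ has $g \ge 2k+2$ internal vertices. Consider the two vertices $x_{k+1}$ and $x_{k+2}$ (counting from one end $u_p$); since $g \ge 2k+2$, both lie in the gap and each is at distance $\ge k+1$ from $u_p$ on the ``near'' side only after accounting — more carefully, I would pick $x_{k+1}$ and $x_{g-k}$, the two vertices exactly $k+1$ steps in from the two ends. Because $g \ge 2k+2$ we have $k+1 \le g-k$, so these are well-defined (possibly equal only if $g = 2k+1$, which is excluded). Each landmark $z$ reaches $x_{k+1}$ via a path of length $\ge k+1$ through $u_q$ (the far end) since the near route through $u_p$ already has length $\ge k+1$; hence $d_k(x_{k+1}, z) = k+1$ for every $z \in M_k$ — and symmetrically $d_k(x_{g-k}, z) = k+1$ for every $z \in M_k$. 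Thus $r_{k,M_k}(x_{k+1}) = r_{k,M_k}(x_{g-k}) = (k+1, \dots, k+1)$, and since $x_{k+1} \ne x_{g-k}$, this contradicts that $M_k$ resolves them. For the ``at most one gap of size $2k+1$'' claim: if two distinct gaps each had exactly $2k+1$ internal vertices, I would take the ``middle'' vertex $x_{k+1}$ of each; each such middle vertex is at distance exactly $k+1$ from both bounding landmarks of its gap and at distance $\ge k+1$ from all others, so again each has all-$(k+1)$ code, and the two middle vertices collide — contradiction. I should double-check the edge effect that $n \ge 2k+3$ guarantees $r \ge 2$ (so $M_k$ genuinely has well-defined gaps with two endpoints), which follows since a single vertex cannot distance-$k$ resolve a cycle this large.

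For part (b), suppose a gap $I$ has $g \ge k+1$ internal vertices, bounded by $u_p$ and $u_q$, and suppose toward contradiction that a neighboring gap $I'$ — say the one on the $u_q$ side, bounded by $u_q$ and the next landmark $u_{q'}$ — has $g' \ge k+1$ internal vertices. The idea is to find a vertex $y \in I$ and a vertex $y' \in I'$ with the same distance-$k$ code. Take $y$ to be the vertex of $I$ at distance $k+1$ from $u_q$ measured inside the gap going away from $u_q$ (well-defined since $g \ge k+1$; if $g = k+1$ then $y$ is adjacent to $u_p$), and take $y'$ to be the vertex of $I'$ at distance $k+1$ from $u_q$ likewise. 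Then $d(y, u_q) = d(y', u_q)$, and for every other landmark $z$, the shortest path from $y$ (resp. $y'$) to $z$ that does not pass through $u_q$ has length $\ge k+1$ on the side entering the gap, so $d_k(y, z) = \min\{k+1,\, (k+1) + d_k(u_q,z)\} \wedge (\text{something} \ge k+1) = k+1$, independent of which gap we are in; hence $r_{k,M_k}(y) = r_{k,M_k}(y')$, a contradiction. I expect the main obstacle to be handling the routing carefully when the cycle is ``barely'' large enough, i.e.\ when $r = 2$ and the two gaps partition almost all of $C_n$ — there one must be sure the ``far'' route of length $\ge k+1$ genuinely exists and that $y \ne y'$ (they lie in different gaps, hence are automatically distinct). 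A clean way to package all of this is a single lemma: \emph{any internal gap vertex at distance $\ge k+1$ from both endpoints of its gap has distance-$k$ code equal to $(k+1,\dots,k+1)$}, proved once via the $\min$ formula, after which (a) and (b) are immediate counting consequences.
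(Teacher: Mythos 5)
Your part (a) is fine and is essentially the paper's own argument: in a gap with at least $2k+2$ internal vertices, the two vertices at depth $k+1$ from the two ends have all-$(k+1)$ distance-$k$ codes, and the middle vertex of any $(2k+1)$-gap has an all-$(k+1)$ code, so two such gaps would collide. The genuine problem is in part (b). The pair you choose, $y\in I$ and $y'\in I'$ at distance $k+1$ from the shared landmark $u_q$, need not have equal codes: your claim that $d_k(y,z)=k+1$ for every landmark $z\neq u_q$ requires $y$ to also be at distance at least $k+1$ from the \emph{other} endpoint $u_p$ of its own gap, i.e.\ it requires $g\ge 2k+1$, while the hypothesis only gives $g\ge k+1$ (and by part (a) at most one gap can even reach size $2k+1$). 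If, say, $g=k+1$, then $y$ is adjacent to $u_p$ (as you yourself note), so $d_k(y,u_p)=1$, whereas $d_k(y',u_p)$ is in general $k+1$; the landmark $u_p$ then separates $y$ from $y'$ and no contradiction arises. Concretely, for $k=1$ with consecutive vertices $u_p,x_1,x_2,u_q,x_1',x_2',u_{q'}$ and $u_p,u_q,u_{q'}\in M_1$, your choice is $y=x_1$, $y'=x_2'$, and $r_{1,M_1}(y)$ has a $1$ in the $u_p$-coordinate while $r_{1,M_1}(y')$ has a $2$ there. For the same reason, your closing ``packaging lemma'' (all-$(k+1)$ code for a gap vertex at distance $\ge k+1$ from \emph{both} endpoints of its gap) is true and does give (a), but the vertices $y,y'$ of your (b) do not satisfy its hypothesis, so (b) is not a consequence of it.

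The repair is exactly the paper's choice: take the two vertices \emph{adjacent} to the shared landmark $u_q$, one in each of the two gaps. Since each gap has at least $k+1$ internal vertices, each of these two vertices is at distance $1$ from $u_q$ and at distance at least $k+1$ from every other landmark (leaving through its own gap it must travel past at least $k+1$ non-landmark vertices; leaving past $u_q$ it must cross the entire neighboring gap, giving length at least $k+3$ to the next landmark). Hence both receive the code with entry $1$ at $u_q$ and $k+1$ everywhere else, contradicting that $M_k$ is a distance-$k$ resolving set. In the paper's notation this is the observation that the hypothesis produces $2k+3$ consecutive vertices whose only landmark is the central vertex $u_{j+k+1}$, and then $r_{k,M_k}(u_{j+k})=r_{k,M_k}(u_{j+k+2})$.
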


\begin{proof}
For a positive integer $k$, let $M_k$ be a minimum distance-$k$ resolving set of $C_n$ for $n \ge 2k+3$.

(a) If there is a gap of $M_k$ containing $2k+2$ consecutive vertices $u_{j}, u_{j+1}, \ldots, u_{j+2k+1}$ of $C_n$, where $0 \le j \le n-1$ and the subscript is taken modulo $n$, then $r_{k,M_k}(u_{j+k})=r_{k, M_k}(u_{j+k+1})$. If there exist distinct two gaps $\{u_{p}, u_{p+1}, \ldots, u_{p+2k}\}$ and $\{u_q, u_{q+1}, \ldots, u_{q+2k}\}$ of $M_k$, then $r_{k, M_k}(u_{p+k})=r_{k, M_k}(u_{q+k})$.

(b) Suppose a gap of $M_k$ contains at least $k+1$ vertices, and one of its neighboring gap contains more than $k$ vertices. Then there exist $2k+3$ consecutive vertices $u_j, u_{j+1}, \ldots, u_{j+2k+2}$ of $C_n$ such that $M_k \cap \{u_j, u_{j+1}, \ldots, u_{j+2k+2}\}=\{u_{j+k+1}\}$ and $r_{k, M_k}(u_{j+k})=r_{k, M_k}(u_{j+k+2})$.~\hfill
\end{proof}

\begin{theorem}\label{kdim_cycle}
Let $n\ge 3$ and let $k$ be any positive integer. 
\begin{itemize}
\item[(a)] If $n \le 3k+3$, then $\dim_k(C_n)=2$.
\item[(b)] If $n \ge 3k+4$, then
\begin{equation*}
\dim_{k}(C_n)=\left\{
\begin{array}{ll}
\lfloor\frac{2n+3k-1}{3k+2}\rfloor & \mbox{ if } n \equiv 0,1,\ldots, k+2 \pmod{(3k+2)},\\ 
\lfloor\frac{2n+4k-1}{3k+2}\rfloor & \mbox{ if } n \equiv k+3,\ldots,  \lceil \frac{3k+5}{2}\rceil-1 \pmod{(3k+2)},\\ 
\lfloor\frac{2n+3k-1}{3k+2}\rfloor & \mbox{ if } n \equiv \lceil \frac{3k+5}{2}\rceil,\ldots, 3k+1 \pmod{(3k+2)}.
\end{array}\right.
\end{equation*}
\end{itemize}
\end{theorem}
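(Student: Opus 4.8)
\textbf{Proof proposal for Theorem~\ref{kdim_cycle}.}

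The plan is to build on Lemma~\ref{lem_cycle}, which already constrains the structure of a minimum distance-$k$ resolving set $M_k$ of $C_n$: every gap has size at most $2k+1$, at most one gap has size exactly $2k+1$, and a gap of size $\ge k+1$ forces both neighboring gaps to have size $\le k$. First I would establish that these necessary conditions are also \emph{sufficient} in the following sense: if $M$ is a set of vertices of $C_n$ whose gaps all satisfy the three conditions of Lemma~\ref{lem_cycle}(a)--(b), then $M$ is a distance-$k$ resolving set. This is the key combinatorial lemma. To prove it, take two distinct vertices $x,y$ not resolved by $M$; examine the "profile" of distances $d_k(x,v)$, $d_k(y,v)$ over $v\in M$. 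If $x$ (and hence $y$, by the unresolved assumption) is within distance $k$ of some element of $M$, a careful local analysis around the nearest landmark(s) — using that the two adjacent gaps cannot both be large — produces a contradiction, essentially reproducing the three bad configurations exhibited in the proof of Lemma~\ref{lem_cycle}. If both $x$ and $y$ are at distance $\ge k+1$ from all of $M$, then $x$ and $y$ lie in (possibly the same) gap of size $\ge 2k+1$; since at most one such gap exists and it has size exactly $2k+1$, a direct check shows the unique pair of "deep" vertices in that gap is still resolved by the landmarks bounding it.

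With the structural equivalence in hand, part~(a) follows quickly: when $n\le 3k+3$ I would exhibit an explicit $2$-element resolving set by placing the two landmarks so the two resulting gaps have sizes that satisfy Lemma~\ref{lem_cycle} — e.g. sizes $\lceil (n-2)/2\rceil$ and $\lfloor(n-2)/2\rfloor$, both of which are $\le 2k+1$ when $n\le 4k+3$, with the "at least one gap $\le k$" condition checked separately in the narrow range — and combine with $\dim_k(C_n)\ge\dim(C_n)=2$ from Proposition~\ref{dim_cycle} and Observation~\ref{obs_bounds}(a). (The precise threshold $3k+3$ rather than $4k+3$ comes from condition (b): with only two gaps they are neighboring gaps of each other, so if one has size $\ge k+1$ the other must have size $\le k$, forcing $n-2\le (2k+1)+k$.)

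For part~(b), the problem reduces to a pure optimization: minimize $|M|=r$ subject to partitioning $n-r$ gap-vertices into $r$ cyclically arranged blocks, each of size $\le 2k+1$, with at most one block of size $2k+1$, and with no two cyclically adjacent blocks both of size $\ge k+1$. I would solve this by a greedy/averaging argument: to use as few landmarks as possible one wants blocks as large as possible, and the adjacency constraint means the best repeating pattern alternates a "big" block of size $2k+1$ (allowed only once) or $k+1$ with a "small" block of size $k$ — roughly, consecutive pairs of gaps contribute at most $(2k+1)+k=3k+1$ "new" vertices beyond two landmarks, i.e. $3k+2$ vertices per two landmarks beyond the one exceptional $+k$ boost from the single size-$(2k+1)$ gap. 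This yields $r\approx \frac{2n}{3k+2}$, and carefully tracking the exceptional gap and the residue of $n$ modulo $3k+2$ produces the three-case floor formula, with the breakpoints at $k+2$, $\lceil\frac{3k+5}{2}\rceil-1$, and $3k+1$ arising from when an extra gap of size between $k+1$ and $2k+1$ can or cannot be "absorbed" without an additional landmark. Both the lower bound (any valid configuration needs at least this many landmarks, by the averaging argument applied to consecutive gap pairs) and the upper bound (an explicit periodic placement achieving it) must be given for each residue class.

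The main obstacle I expect is the bookkeeping in part~(b): translating the greedy intuition into a clean lower bound requires handling the cyclic adjacency constraint rigorously (a linear chain and a cycle differ by a boundary term), and the case analysis by residue class modulo $3k+2$ is delicate precisely at the boundaries $n\equiv k+2$ and $n\equiv\lceil\frac{3k+5}{2}\rceil-1$, where rounding in the floor function interacts with whether the single permitted size-$(2k+1)$ gap is "worth" using. I would organize this by first proving the sufficiency lemma, then doing part~(a), then formulating the optimization problem precisely as a lemma about cyclic sequences, and finally grinding through the three residue ranges, checking each against small cases (and against Proposition~\ref{adim_c}, the $k=1$ specialization, which should match $\lfloor\frac{2n+2}{5}\rfloor$).
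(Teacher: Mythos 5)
Your overall route coincides with the paper's: use Lemma~\ref{lem_cycle} to constrain the gap structure, obtain the lower bound by a counting/parity argument over consecutive gaps, and obtain the upper bound from explicit periodic placements, split by residue modulo $3k+2$. The one genuinely new ingredient you add, a converse to Lemma~\ref{lem_cycle} (the three gap conditions are also sufficient for a set to be distance-$k$ resolving), is plausible and would let you certify constructions by checking gap sizes alone, but you only sketch it; the paper instead verifies its explicit sets directly, so this converse is an additional proof obligation for you, not something you may assume.

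Two concrete problems in the details. First, your part (a) construction fails as stated: for $2k+4 \le n \le 3k+3$ the balanced placement yields two gaps each of size at least $k+1$, and then the two neighbors of either landmark share the distance-$k$ code $(1,k+1)$, so the set is not resolving --- your own parenthetical (one gap must have size at most $k$) contradicts the balanced choice. The correct two-landmark placement is the unbalanced one with gaps of sizes $2k+1$ and $n-2k-3 \le k$, as in the paper. Second, your bookkeeping for (b) mis-states the extremal pattern: generic ``big'' gaps have size $2k$ (only one gap may have size $2k+1$), so an adjacent pair of gaps contributes at most $3k$ vertices; the exceptional gap contributes $+1$, not $+k$; and the $+k$ is the odd-parity term, since an odd number of landmarks forces one extra gap of size at most $k$. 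That parity dichotomy is precisely what settles the middle residue class $k+3 \le j \le \lceil\frac{3k+5}{2}\rceil-1$: a set of even size $2\ell$ forces $n \le (3k+2)\ell+1$, a set of odd size $2\ell+1$ forces $n \le (3k+2)\ell+k+2$, and in this range an odd set of the generic size $2x+1$ is infeasible, pushing the answer up to $2x+2=\lfloor\frac{2n+4k-1}{3k+2}\rfloor$. Your averaging sketch gestures at a ``boundary term'' but does not pin this mechanism down, and without it the three-case formula does not emerge.
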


\begin{proof}
Let $C_n$ be given by $u_0,u_1, \ldots, u_{n-1}, u_0$ for $n \ge 3$, and let $k$ be a positive integer. 

(a) Let $n \le 3k+3$. Since $\{u_0, u_{\alpha}\}$, where $\alpha=\min\{2k+2, n-1\}$, forms a distance-$k$ resolving set of $C_n$, $\dim_k(C_n) \le 2$. By Corollary~\ref{kdim_characterization}(a), $\dim_k(C_n) \ge 2$. Thus, $\dim_k(C_n)=2$ for $n \le 3k+3$.

(b) Let $n \ge 3k+4$; then $\dim_k(C_n)\ge3$. Since $k$ is a positive integer, we must have $1 \le k \le \lfloor\frac{n}{2}\rfloor-2$. 

First, we show that $\dim_k(C_n) \ge \lfloor\frac{2n+3k-1}{3k+2}\rfloor$; moreover, we show that $\dim_k(C_n) \ge \lfloor\frac{2n+4k-1}{3k+2}\rfloor$ if $n \equiv k+3,\ldots,  \lceil \frac{3k+5}{2}\rceil-1 \pmod{(3k+2)}$. Let $S_k$ be a minimum distance-$k$ resolving set of $C_n$. If $|S_k|=2\ell$ for some positive integer $\ell$, then at most $\ell$ gaps contain more than $k$ vertices by Lemma~\ref{lem_cycle}(b), and those $\ell$ gaps contain at most $2k$ vertices except possibly one gap containing $2k+1$ vertices by Lemma~\ref{lem_cycle}(a); thus, the number of vertices belonging to the gaps of $S_k$ is at most $3k\ell+1$, and hence $n-2\ell \le 3k\ell+1$, which implies $|S_k|=2\ell \ge \lceil \frac{2n-2}{3k+2}\rceil=\lfloor \frac{2n+3k-1}{3k+2}\rfloor$. If $|S_k|=2\ell+1$ for some positive integer $\ell$, then at most $\ell$ gaps contain more than $k$ vertices by Lemma~\ref{lem_cycle}(b), and those $\ell$ gaps contain at most $2k$ vertices except possibly one gap containing $2k+1$ vertices by Lemma~\ref{lem_cycle}(a); thus, the number of vertices belonging to the gaps of $S_k$ is at most $3k\ell+k+1$, and hence $n-(2\ell+1) \le 3k\ell+k+1$, which implies $|S_k|=2\ell+1 \ge \lceil\frac{2n+k-2}{3k+2}\rceil=\lfloor \frac{2n+4k-1}{3k+2}\rfloor \ge \lfloor \frac{2n+3k-1}{3k+2}\rfloor$. 

Now, suppose $n=(3k+2)x+j$, where $x \ge 1$ and $k+3 \le j \le \lceil \frac{3k+5}{2}\rceil-1$; notice $k \ge 2$. Then $|S_k| = 2x+2$. To see why $|S_k| \le 2x+2$, for every path $P$ consisting of $3k+2$ vertices on $C_n$, we put exactly two vertices of $P$ in $S_k$ including one end vertex, say $v$, of $P$ and another vertex of $P$ at distance $2k+1$ from $v$ in $S_k$. Since $n\le (3k+2)x+\lceil \frac{3k+5}{2}\rceil-1 \le (3k+2)x+3k+2$, $|S_k| \le 2x+2$. To see why $|S_k| \ge 2x+2$, first observe that $|S_k|\ge 2x+1$ follows from the lower bounds that we proved in the last paragraph. However if $|S_k|=2x+1$, then we proved that $|S_k| \ge \lceil\frac{2n+k-2}{3k+2}\rceil \ge 2x+2$, giving a contradiction. Since $|S_k|=2x+2$, we have $|S_k|=\lfloor \frac{2n+4k-1}{3k+2}\rfloor$ in this case.

Now we show that $\dim_k(C_n) \le \lfloor\frac{2n+3k-1}{3k+2}\rfloor$ if $n \equiv 0,1,\ldots, k+2 \pmod{(3k+2)}$ or $n \equiv \lceil \frac{3k+5}{2}\rceil,\ldots, 3k+1 \pmod{(3k+2)}$. 

Case 1: $n=(3k+2)x+j$, where $x \ge 1$ and $0 \le j \le 1$. Note that $\lfloor\frac{2n+3k-1}{3k+2}\rfloor=2x$. Let $S_k=\{u_{0}, u_{2k+2}\} \cup (\cup_{i=1}^{x-1}\{u_{(3k+2)i+1}, u_{(3k+2)i+2k+2}\})$. Then $S_k$ is a distance-$k$ resolving set of $C_n$ with $|S_k|=2x$. So, $\dim_k(C_n)\le |S_k|=2x=\lfloor\frac{2n+3k-1}{3k+2}\rfloor$.

Case 2: $n=(3k+2)x+j$, where $x \ge 1$ and $2 \le j \le k+2$. Note that $\lfloor\frac{2n+3k-1}{3k+2}\rfloor=2x+1$. Let $S_k=\{u_0, u_{2k+2}\} \cup (\cup_{i=1}^{x-1}\{u_{(3k+2)i+1}, u_{(3k+2)i+2k+2}\}) \cup \{u_{(3k+2)x+1}\}$. Since $S_k$ is a distance-$k$ resolving set of $C_n$ with $|S_k|=2x+1$, $\dim_k(C_n)\le |S_k|=2x+1=\lfloor\frac{2n+3k-1}{3k+2}\rfloor$.

Case 3: $n=(3k+2)x+j$, where $x \ge 1$ and $\lceil\frac{3k+5}{2}\rceil \le j \le 3k+1$. Note that $\lfloor\frac{2n+3k-1}{3k+2}\rfloor=2x+2$. Let $S_k=(\cup_{i=0}^{x-1}\{u_{(3k+2)i}, u_{(3k+2)i+2k+1}\}) \cup \{u_{(3k+2)x}, u_{\alpha}\}$, where $\alpha=\min\{n-1, (3k+2)x+2k+1\}$. Then $S_k$ is a distance-$k$ resolving set of $C_n$ with $|S_k|=2x+2$. So, $\dim_k(C_n)\le |S_k|=2x+2=\lfloor\frac{2n+3k-1}{3k+2}\rfloor$.
~\hfill
\end{proof}

\begin{remark}
Note that, for $n\ge 4$, Proposition~\ref{adim_c} is an immediate corollary of Theorem~\ref{kdim_cycle} when $k=1$.
\end{remark}

Next, we determine the distance-$k$ dimension of paths. We recall the following result.

\begin{proposition}\emph{\cite{adim}}\label{adim_p}
For $n \ge 4$, $\dim_1(P_n)=\lfloor \frac{2n+2}{5}\rfloor$.
\end{proposition}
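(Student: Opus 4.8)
The plan is to determine $\dim_k(P_n)$ by the same two-sided approach used for cycles in Theorem~\ref{kdim_cycle}: first establish a lower bound via a ``gap'' analysis analogous to Lemma~\ref{lem_cycle}, then match it with explicit distance-$k$ resolving sets. First I would fix the path $P_n$ as $u_1, u_2, \ldots, u_n$ and, given a minimum distance-$k$ resolving set $S_k$, analyze the structure of the maximal runs of consecutive vertices not in $S_k$. The key local observations are the path analogues of Lemma~\ref{lem_cycle}: an interior gap (one bounded on both sides by elements of $S_k$) can contain at most $2k+1$ vertices, since $2k+2$ consecutive non-landmarks $u_j, \ldots, u_{j+2k+1}$ give $r_{k,S_k}(u_{j+k}) = r_{k,S_k}(u_{j+k+1})$; and at most one such interior gap can have $2k+1$ vertices, while the two ``end'' segments (before the first landmark and after the last landmark) behave slightly differently — an end segment can be a bit longer because a vertex past distance $k+1$ from the nearest landmark is only constrained on one side. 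The neighboring-gap restriction (if a gap has $\ge k+1$ vertices, its neighbors have $\le k$) carries over verbatim from the cycle argument.

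Next I would translate these combinatorial constraints into a counting inequality. If $|S_k| = \beta$, then $S_k$ cuts $P_n$ into $\beta - 1$ interior gaps plus $2$ end pieces. Using the neighboring-gap lemma, at most roughly half the interior gaps can exceed $k$ vertices, each of those contributing at most $2k$ (with one exceptional gap contributing $2k+1$), the remaining interior gaps contributing at most $k$ each, and each of the two end pieces contributing at most $k+1$ (it is not worth making an end piece longer because a resolving vertex can always be placed at the very end). Summing over all pieces and setting the total equal to $n - \beta$ yields a lower bound of the form $\beta \ge \lceil (2n + c)/(3k+2) \rceil$ for an appropriate small constant $c$ depending on $n \bmod (3k+2)$, which one rewrites as a floor expression. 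Because the two end pieces give a small additive saving compared to the cyclic case, the resulting formula for $\dim_k(P_n)$ should look like $\lfloor \frac{2n+r}{3k+2} \rfloor$ for various residues $r$, consistent with Proposition~\ref{adim_p} specializing to $k=1$ (where $3k+2 = 5$ and $\dim_1(P_n) = \lfloor \frac{2n+2}{5} \rfloor$). I expect there will be small cases ($n \le 3k+2$ or so) where $\dim_k(P_n) = 1$ or $2$ handled separately via Corollary~\ref{kdim_characterization}(a) and Observation~\ref{obs_diam}(b).

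For the upper bound I would exhibit explicit resolving sets mirroring Cases 1--3 of the proof of Theorem~\ref{kdim_cycle}: write $n = (3k+2)x + j$, place landmark pairs at distance $2k+1$ apart inside successive blocks of $3k+2$ vertices, starting flush with $u_1$ (exploiting that the path end needs no ``wrap-around'' landmark), and add one extra landmark near the far end when the residue $j$ forces it. Verifying that such a set is distance-$k$ resolving is the routine part: within each block the two landmarks distinguish all vertices just as on the cycle, and adjacent blocks are distinguished because the codes are monotone along the path. Finally I would note the promised strengthening of Theorem~\ref{upper_diam} for paths, namely that for $P_n$ the bound $n - \min\{d, k+1\}$ is far from tight and the true value is the much smaller $\Theta(n/k)$ expression just derived.

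The main obstacle I anticipate is the bookkeeping at the two ends of the path and the resulting case split by residue class. On a cycle every gap is ``interior,'' so the gap count equals $|S_k|$ and the arithmetic is uniform; on a path the two end segments have an asymmetric constraint (bounded by a landmark on only one side), and getting the constant in the numerator exactly right for each residue — and checking that the extremal configuration is actually achievable rather than just consistent with the inequalities — is where the argument needs care. This is precisely the same delicate residue analysis that appears in the cycle proof, so I would model the path proof on it closely, adjusting the boundary contributions from $2k$ (full interior gap) to $k+1$ (end segment) where appropriate and propagating that change through the floor/ceiling manipulations.
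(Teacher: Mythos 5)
Your overall route is the one the paper itself takes: prove the general formula for $\dim_k(P_n)$ by a gap analysis modelled on the cycle case and then read off the $k=1$ statement, exactly as the paper does via Lemma~\ref{lem_path}, Theorem~\ref{kdim_path}, and the remark that Proposition~\ref{adim_p} is the $k=1$ specialization. (The only stylistic difference is in the upper bound: the paper transfers a cycle solution by deleting the middle edge of a $(2k+1)$-gap, giving $\dim_k(P_n)\le\dim_k(C_n)$, while you build resolving sets on the path directly; either works.) The trouble is in your lower-bound bookkeeping at the two ends, which you correctly flag as the delicate part but then get partly wrong. An end segment cannot be ``a bit longer'' than an interior gap: it can contain at most $k+1$ vertices, because if it contained $k+2$ vertices then its two outermost vertices would both receive the all-$(k+1)$ code. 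Your later use of ``at most $k+1$'' is the right bound, but the justification ``it is not worth making an end piece longer'' is construction-style reasoning; for the lower bound you need this as a constraint satisfied by \emph{every} distance-$k$ resolving set, proved by exhibiting two vertices with equal codes.

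More importantly, the constraint list you propose to carry over ``verbatim'' from the cycle is not sufficient on a path. Two path-specific facts from Lemma~\ref{lem_path} are missing, and they are exactly what makes the count close: (i) if the initial or terminal gap is nonempty at all, then its neighboring gap has at most $k$ vertices (your rule only fires when a gap has at least $k+1$ vertices); and (ii) the union of the two end gaps has at most $2k+1$ vertices, and one cannot simultaneously have an interior gap with $2k+1$ vertices and a union gap with $2k+1$ vertices. Without (i), your constraints admit, for $k=1$, the configuration on $P_7$ with $u_0$ free, $u_1$ a landmark, $u_2,u_3,u_4$ free, $u_5$ a landmark, $u_6$ free; but there $r_{1,S}(u_0)=(1,2)=r_{1,S}(u_2)$, and indeed $\dim_1(P_7)=\lfloor 16/5\rfloor=3$, so no two-element set works. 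In general, counting with only your stated constraints is too generous by roughly $k+2$ vertices per parity class of $|S_k|$, so the resulting lower bound misses the exact formula for several residues (already at $k=1$ it would not exclude $\beta=2$ for $n=7,8,9$). Once the end constraints of Lemma~\ref{lem_path} are incorporated, your counting closes and your plan coincides with the paper's proof.
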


Let $P_n$ be an $n$-path given by $u_0, u_1, \ldots, u_{n-1}$, where $n \ge 4$. Similar to the case for $C_n$, we define gaps and neighboring gaps of a vertex subset $M$ of $P_n$ analogously, where $|M| \ge 2$. If $d(u_0, M)=x$, then the set $\{u_0, u_1, \ldots, u_{x-1}\}$ is called the \emph{initial gap} of $M$; similarly, if $d(u_{n-1}, M)=y$, then the set $\{u_{n-1}, u_{n-2}, \ldots, u_{n-y}\}$ is called the \emph{terminal gap} of $M$. The union of the initial gap and the terminal gap of $M$ is called the \emph{union gap} of $M$. If $u_0\in M$ ($u_{n-1}\in M$, respectively), then the initial gap (terminal gap, respectively) is empty. The following lemma is analogous to Lemma~\ref{lem_cycle}, after adjusting for paths.

\begin{lemma}\label{lem_path}
For a positive integer $k$, let $M_k$ be a minimum distance-$k$ resolving set of $P_n$ for $n \ge k+3$. Then
\begin{itemize}
\item[(a)] Every gap of $M_k$ contains at most $2k+1$ vertices, the initial gap of $M_k$ contains at most $k+1$ vertices, and the terminal gap of $M_k$ contains at most $k+1$ vertices. Moreover, at most one gap of $M_k$ contains $2k+1$ vertices and the union gap of $M_k$ contains at most $2k+1$ vertices, but not both.
\item[(b)] If a gap of $M_k$ contains at least $k+1$ vertices, then any neighboring gaps contain at most $k$ vertices. If the initial gap or the terminal gap of $M_k$ contains at least one vertex, then its neighboring gap contains at most $k$ vertices.
\end{itemize}
\end{lemma}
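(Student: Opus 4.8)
The plan is to mimic the proof of Lemma~\ref{lem_cycle}, producing the two unresolved pairs explicitly whenever a gap (or the initial/terminal/union gap) is too large. The key difference from the cycle case is that vertices in an initial or terminal gap have only one "side" from which a resolving vertex can distinguish them, which is why these gaps are allowed to be roughly half as long.

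For part (a), I would first treat an ordinary (internal) gap: if some gap contains $2k+2$ consecutive vertices $u_j,\dots,u_{j+2k+1}$ with resolving vertices flanking it on both sides, then as in Lemma~\ref{lem_cycle}(a) we have $r_{k,M_k}(u_{j+k})=r_{k,M_k}(u_{j+k+1})$, since both of these vertices are at distance $\ge k+1$ from every vertex of $M_k$, contradicting that $M_k$ resolves $P_n$. Next, for the initial gap $\{u_0,\dots,u_{x-1}\}$: if $x\ge k+2$, then $u_0$ and $u_1$ are both at distance $\ge k+1$ from every vertex of $M_k$ (the nearest member of $M_k$ lies at index $\ge x\ge k+2$), so $r_{k,M_k}(u_0)=r_{k,M_k}(u_1)$, a contradiction; the terminal gap is symmetric. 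For the "at most one long gap" statement: if two internal gaps each contained $2k+1$ vertices, say $\{u_p,\dots,u_{p+2k}\}$ and $\{u_q,\dots,u_{q+2k}\}$, then $r_{k,M_k}(u_{p+k})=r_{k,M_k}(u_{q+k})$ (both the all-$(k+1)$ vector). The same collision argument shows that one cannot simultaneously have an internal gap of size $2k+1$ and a union gap of total size $2k+1$: if the initial gap has $a$ vertices and the terminal gap has $b$ vertices with $a+b=2k+1$, then $u_{a-1}$ (if $a\ge k+1$, take instead the vertex at index $a-1$; more carefully, pick the vertex at distance exactly $k+1$ into the initial gap if $a\ge k+1$, otherwise argue via the terminal side) realizes the all-$(k+1)$ code, and so does $u_{p+k}$ from the internal gap, giving a collision. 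I would write this last sub-case by observing that a union gap of size $\ge 2k+1$ forces at least one of the two end gaps to have $\ge k+1$ vertices (since if both had $\le k$ then the union would have $\le 2k$), hence contains a vertex with the all-$(k+1)$ code, which then collides with $u_{p+k}$.

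For part (b), the argument is again the $2k+3$-window collision from Lemma~\ref{lem_cycle}(b): if an internal gap has $\ge k+1$ vertices and a neighboring gap has $\ge k+1$ vertices, the single member of $M_k$ separating them sits in a block of $2k+3$ consecutive vertices $u_j,\dots,u_{j+2k+2}$ with $M_k\cap\{u_j,\dots,u_{j+2k+2}\}=\{u_{j+k+1}\}$, and then $r_{k,M_k}(u_{j+k})=r_{k,M_k}(u_{j+k+2})$ since both are at distance $k+1$ from $u_{j+k+1}$ and distance $\ge k+1$ from all other members of $M_k$. For the initial/terminal statement: if the initial gap is nonempty, let $v$ be the first member of $M_k$ (at index $x\ge 1$) and suppose the gap immediately following $v$ has $\ge k+1$ vertices; then $u_{x-1}$ and $u_{x+1}$ are both at distance $1$ from $v$? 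No — more precisely, $u_{x-1}$ is at distance $1$ from $v$ while $u_{x+1}$ is at distance $1$ from $v$ as well, so instead one compares $u_{x-1}$ with a suitable vertex inside the following gap: the vertex $u_{x+1}$ has $d_k(u_{x+1},v)=1=d_k(u_{x-1},v)$, and if $x-1\ge 1$ and the following gap has $\ge k+1$ vertices, then every other member of $M_k$ is at distance $\ge k+1$ from both $u_{x-1}$ and $u_{x+1}$ (the next member is at least $x+1+(k+1)$ away), forcing $r_{k,M_k}(u_{x-1})=r_{k,M_k}(u_{x+1})$, a contradiction; the terminal case is symmetric.

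The main obstacle I expect is bookkeeping in part (a) for the "at most one gap of $2k+1$ vertices and union gap of at most $2k+1$ vertices, but not both" clause: one has to carefully verify that a sufficiently long end gap actually produces a vertex whose distance-$k$ code is the all-$(k+1)$ vector (this needs the end gap to have at least $k+1$ vertices, which is exactly what a union gap of size $\ge 2k+1$ guarantees for at least one end), and then check this collides with the interior long gap rather than overlapping it degenerately when $n$ is close to the lower bound $k+3$. The boundary indices ($x-1\ge 1$ versus $x-1 = 0$, i.e.\ $v=u_1$) also need a quick separate sanity check, but these are routine once the window-collision template is in place.
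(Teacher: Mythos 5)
Your proposal is correct and takes essentially the same approach as the paper's proof: in each over-long configuration you exhibit the same colliding pairs of distance-$k$ codes (the two middle vertices of a gap with $2k+2$ vertices, the vertices $u_0$ and $u_1$ for an end gap with $k+2$ vertices, two all-$(k+1)$-code vertices when there are two gaps of $2k+1$ vertices or a gap of $2k+1$ vertices together with a union gap of $2k+1$ vertices, and the two vertices flanking the unique landmark of the $(2k+3)$- or $(k+3)$-vertex window in part (b)). Two small fixes for the write-up: in part (b) the vertices $u_{j+k}$ and $u_{j+k+2}$ are at distance $1$ from $u_{j+k+1}$, not $k+1$ (what matters is only that these distances coincide while every other vertex of $M_k$ is at distance at least $k+1$ from both), and the clause that the union gap has at most $2k+1$ vertices needs the one-line observation that if both the initial and terminal gaps had $k+1$ vertices, then $u_0$ and $u_{n-1}$ would both have the all-$(k+1)$ code.
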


\begin{proof}
Let $P_n$ be given by $u_0, u_1, \ldots, u_{n-1}$. 

(a) If there is a gap of $M_k$ containing $2k+2$ consecutive vertices $u_j, u_{j+1}, \ldots, u_{j+2k+1}$ of $P_n$, where $ 1 \le j \le n-2k-2$, then $r_{k, M_k}(u_{j+k})=r_{k, M_k}(u_{j+k+1})$. If the initial gap or the terminal gap of $M_k$, say the former without loss of generality, contains $k+2$ consecutive vertices $u_0, u_1, \ldots, u_{k+1}$, then $r_{k, M_k}(u_0)=r_{k, M_k}(u_1)$. If there exist two distinct gaps $u_p, u_{p+1}, \ldots, u_{p+2k}$ and $u_q, u_{q+1}, \ldots, u_{q+2k}$ of $M_k$, then $r_{k, M_k}(u_{p+k})=r_{k, M_k}(u_{q+k})$. If there exists a gap of $M_k$ containing $2k+1$ consecutive vertices, say $u_r, u_{r+1}, \ldots, u_{r+2k}$, and the union gap of $M_k$ containing $2k+1$ vertices, say $(\cup_{i=0}^{k}\{u_i\})\cup (\cup_{j=1}^{k}\{u_{n-j}\})$, then $r_{k, M_k}(u_{r+k})=r_{k, M_k}(u_{0})$.

(b) If a gap of $M_k$ contains at least $k+1$ vertices and one of its neighboring gap contains more than $k$ vertices, then there exist $2k+3$ consecutive vertices $u_j, u_{j+1}, \ldots, u_{j+2k+2}$ of $P_n$ such that $M_k \cap \{u_j, u_{j+1}, \ldots, u_{j+2k+2}\}=\{u_{j+k+1}\}$ and $r_{k, M_k}(u_{j+k})=r_{k, M_k}(u_{j+k+2})$. If the initial gap or the terminal gap of $M_k$, say the former, contains at least one vertex and its neighboring gap contains more than $k$ vertices, then there exist $k+3$ consecutive vertices $u_0, u_{1}, \ldots, u_{k+2}$ of $P_n$ such that $M_k \cap \{u_0, u_{1}, \ldots, u_{k+2}\}=\{u_1\}$ and $r_{k, M_k}(u_{0})=r_{k, M_k}(u_{2})$.~\hfill
\end{proof}

\begin{theorem}\label{kdim_path}
Let $n \ge 2$ and let $k$ be any positive integer.
\begin{itemize}
\item[(a)] If $n \le k+2$, then $\dim_k(P_n)=1$.
\item[(b)] If $k+3 \le n \le 3k+3$, then $\dim_k(P_n)=2$.
\item[(c)] If $n \ge 3k+4$, then
\begin{equation*}
\dim_k(P_n)=\left\{
\begin{array}{ll}
\lfloor\frac{2n+3k-1}{3k+2}\rfloor & \mbox{ if } n \equiv 0,1,\ldots, k+2 \pmod{(3k+2)},\\ 
\lfloor\frac{2n+4k-1}{3k+2}\rfloor & \mbox{ if } n \equiv k+3,\ldots,  \lceil \frac{3k+5}{2}\rceil-1 \pmod{(3k+2)},\\ 
\lfloor\frac{2n+3k-1}{3k+2}\rfloor & \mbox{ if }  n \equiv \lceil \frac{3k+5}{2}\rceil,\ldots, 3k+1 \pmod{(3k+2)}.
\end{array}\right.
\end{equation*}
\end{itemize}
\end{theorem}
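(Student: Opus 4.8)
The plan is to follow the same template as the proof of Theorem~\ref{kdim_cycle}, adapting every step from cycles to paths using Lemma~\ref{lem_path} in place of Lemma~\ref{lem_cycle}. Part (a) is immediate: if $n \le k+2$ then $\diam(P_n) = n-1 \le k+1$, so $d_k = d$ on $P_n$, and a single endpoint resolves the path exactly as it does for metric dimension; combined with Corollary~\ref{kdim_characterization}(a) this gives $\dim_k(P_n) = 1$. For part (b), when $k+3 \le n \le 3k+3$, the set $\{u_0, u_\alpha\}$ with $\alpha = \min\{2k+2, n-1\}$ is a distance-$k$ resolving set (the argument is the same counting of gap sizes as in Theorem~\ref{kdim_cycle}(a), now with an initial gap, a middle gap, and a terminal gap whose sizes are all within the Lemma~\ref{lem_path}(a) budget), and $\dim_k(P_n) \ge 2$ by Corollary~\ref{kdim_characterization}(a) since $P_n \notin \cup_{i=2}^{k+2}\{P_i\}$ for $n \ge k+3$; so $\dim_k(P_n) = 2$.

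The substance is part (c), and here I would mirror the cycle proof closely. For the lower bound, let $S_k$ be a minimum distance-$k$ resolving set of $P_n$ with $n \ge 3k+4$. Write $|S_k|$ as either $2\ell$ or $2\ell+1$. By Lemma~\ref{lem_path}(b), at most $\ell$ of the internal gaps can contain more than $k$ vertices, and by Lemma~\ref{lem_path}(a) each such large gap has at most $2k$ vertices with at most one exception of size $2k+1$; the remaining internal gaps contribute at most $k$ vertices each, and the union gap (initial plus terminal) contributes at most $2k+1$ vertices — but crucially Lemma~\ref{lem_path}(a) says we cannot simultaneously have a $(2k+1)$-gap and a $(2k+1)$-union-gap, so the total "excess" past the baseline of $k$ per region is bounded by $k$ overall. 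Running the same arithmetic as in the cycle case (with the number of gap-regions being $|S_k|+1$ for a path rather than $|S_k|$, but with the union gap playing the role of the single extra slack) yields $n - |S_k| \le 3k\ell + 1$ in the even case and $n - |S_k| \le 3k\ell + k + 1$ in the odd case, giving $|S_k| \ge \lfloor\frac{2n+3k-1}{3k+2}\rfloor$ in general and the stronger $|S_k| \ge \lfloor\frac{2n+4k-1}{3k+2}\rfloor$ in the range $n \equiv k+3, \ldots, \lceil\frac{3k+5}{2}\rceil - 1 \pmod{3k+2}$, exactly as before. For the upper bound, for each residue class I would exhibit an explicit distance-$k$ resolving set built from alternating pairs $\{u_{(3k+2)i+a}, u_{(3k+2)i+a+2k+1}\}$ along the path — the same three-case construction (Cases 1, 2, 3) as in Theorem~\ref{kdim_cycle}(b), with the first and last blocks truncated to respect the initial/terminal gap constraints of Lemma~\ref{lem_path}(a), and with a single extra vertex appended when the residue forces an odd cardinality.

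The main obstacle I anticipate is bookkeeping rather than conceptual: in the cycle case there are $|M_k|$ gaps and exactly one allowed "oversized" ($2k+1$) gap, whereas in the path case there are $|M_k|+1$ gap-regions, the two end regions obey the looser bound $k+1$ rather than $2k+1$ individually, and the "at most one of {a $(2k+1)$-gap, a $(2k+1)$-union-gap}" clause couples the interior and the boundary. Getting the floor/ceiling identities to land on precisely $\lfloor\frac{2n+3k-1}{3k+2}\rfloor$ versus $\lfloor\frac{2n+4k-1}{3k+2}\rfloor$ across all residues — and verifying that the truncated end-blocks in the explicit constructions really do resolve (i.e. no two vertices in an end region share a distance-$k$ code with respect to the chosen landmarks) — will require the same careful residue-by-residue check that appears in the cycle proof. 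Since $\diam(P_n) = n - 1$, one should also confirm consistency with Observation~\ref{obs_diam}(b) and with Proposition~\ref{adim_p} at $k = 1$, which serves as a useful sanity check on the formulas.
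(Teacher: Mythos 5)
Your part (b) witness is incorrect: the set $\{u_0,u_{\alpha}\}$ with $\alpha=\min\{2k+2,n-1\}$ fails to be a distance-$k$ resolving set of $P_n$ whenever $2k+4\le n\le 3k+3$ (a nonempty range for every $k\ge 1$), because $u_{2k+1}$ and $u_{2k+3}$ are both at distance at least $k+1$ from $u_0$ and at distance $1$ from $u_{2k+2}$, hence both get the code $(k+1,1)$. The underlying slip is treating the gap-size conditions of Lemma~\ref{lem_path}(a) as sufficient, when they are only necessary. The paper instead uses $\{u_k,u_{\min\{2k+1,n-1\}}\}$, which does resolve; the statement of (b) is of course still true, but your argument for it does not establish it.

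In part (c) the lower bound as you set it up does not deliver the inequalities you assert. For $|S_k|=2\ell$, counting "at most $\ell$ big internal gaps, each at most $2k$ with one possible exception of size $2k+1$, the remaining internal gaps at most $k$, and the union gap at most $2k+1$ subject to the not-both clause" gives only $n-2\ell\le 2k\ell+(\ell-1)k+(2k+1)=3k\ell+k+1$, not $3k\ell+1$; the odd case is similarly off. The extra $k$ cannot be waved away as bookkeeping: to reach $3k\ell+1$ you must invoke the second clause of Lemma~\ref{lem_path}(b) — if the union gap is large then both end gaps are nonempty, so the two internal gaps adjacent to them contain at most $k$ vertices, which cuts the number of big internal gaps to $\ell-1$ and yields $2k(\ell-1)+k\ell+(2k+1)=3k\ell+1$ (and the analogous count $3k\ell+k+1$ for $|S_k|=2\ell+1$, which is what produces the larger value in the residue class $k+3,\ldots,\lceil\frac{3k+5}{2}\rceil-1$). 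You name this interior/boundary coupling as an anticipated obstacle, but it is exactly the missing step, so the lower bound is a genuine gap rather than routine verification. For the upper bound your plan (re-running the three cycle constructions with truncated end blocks) could be made to work but would require a fresh residue-by-residue verification; the paper sidesteps this entirely by observing that a minimum distance-$k$ resolving set of $C_n$ with a gap of $2k+1$ vertices remains a distance-$k$ resolving set of the path obtained by deleting the middle edge of that gap, so $\dim_k(P_n)\le\dim_k(C_n)$, and together with the lower bound $\dim_k(P_n)\ge\dim_k(C_n)$ everything follows from Theorem~\ref{kdim_cycle}.
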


\begin{proof}
Let $n \ge 2$ and let $k$ be a positive integer. 

(a) If $n\le k+2$, then $\dim_k(P_n)=1$ by Corollary~\ref{kdim_characterization}(a).

(b) Suppose $k+3 \le n \le 3k+3$. If $P_n$ is given by $u_0, u_1, \ldots, u_{n-1}$, then $\{u_k,u_{\alpha}\}$, where $\alpha=\min\{2k+1, n-1\}$, forms a distance-$k$ resolving set of $P_n$; thus $\dim_k(P_n) \le 2$. By Corollary~\ref{kdim_characterization}(a), $\dim_k(P_n)=2$.

(c) Let $n \ge 3k+4$. First, we show that  $\dim_k(P_n) \le \lfloor\frac{2n+3k-1}{3k+2}\rfloor$ if $n \equiv 0,1,\ldots, k+2 \pmod{(3k+2)}$ or $n \equiv \lceil \frac{3k+5}{2}\rceil,\ldots, 3k+1 \pmod{(3k+2)}$, and $\dim_k(P_n) \le \lfloor\frac{2n+4k-1}{3k+2}\rfloor$ if $n \equiv k+3,\ldots,  \lceil \frac{3k+5}{2}\rceil-1 \pmod{(3k+2)}$. By Lemmas~\ref{lem_path} and \ref{lem_cycle}, for $n \ge 3k+4$ every distance-$k$ resolving set of $P_n$ and $C_n$, respectively, has cardinality at least three. Moreover, there exists a minimum distance-$k$ resolving set $S$ of $C_n=P_n+e$ with a gap containing $2k+1$ vertices $u_{j}, u_{j+1} \ldots, u_{j+2k}$ in $C_n$, where $0 \le j \le n-1$ and the subscript is taken modulo $n$. If $e=u_{j+k}u_{j+k+1}$, then $S$ forms a distance-$k$ resolving set of $P_n$; thus $\dim_k(P_n) \le \dim_k(C_n)$ and the desired upper bounds follow from Theorem~\ref{kdim_cycle}. 

Second, we show that $\dim_k(P_n) \ge \lfloor\frac{2n+3k-1}{3k+2}\rfloor$; moreover, we show that $\dim_k(P_n) \ge \lfloor\frac{2n+4k-1}{3k+2}\rfloor$ if $n \equiv k+3,\ldots,  \lceil \frac{3k+5}{2}\rceil-1 \pmod{(3k+2)}$. Let $S_k$ be a minimum distance-$k$ resolving set of $P_n$ such that the union gap of $S_k$ contains $2k+1$ vertices. If $|S_k|=2 \ell$ for some positive integer $\ell$, then at most $\ell-1$ gaps contain more than $k$ vertices by Lemma~\ref{lem_path}(b) and those $\ell-1$ gaps contain at most $2k$ vertices by Lemma~\ref{lem_path}(a); thus, the number of vertices belonging to the gaps of $S_k$ or the union gap of $S_k$ is at most $2k(\ell-1)+k\ell+ (2k+1)=3k\ell+1$, and hence $n-2\ell \le 3k\ell+1$, which implies $|S_k|=2\ell \ge \dim_k(C_n)$. If $|S_k|=2\ell+1$ for some positive integer $\ell$, then at most $\ell-1$ gaps contain more than $k$ vertices by Lemma~\ref{lem_path}(b) and those $\ell-1$ gaps contain at most $2k$ vertices by Lemma~\ref{lem_path}(a); thus, the number of vertices belonging to the gaps of $S_k$ or the union gap of $S_k$ is at most $2k(\ell-1)+k(\ell+1)+(2k+1)=3k\ell+k+1$, and hence $n-(2\ell+1) \le 3k\ell+k+1$, which implies $|S_k|=2\ell +1 \ge \dim_k(C_n)$. In each case, $\dim_k(P_n) \ge \dim_k(C_n)$, and thus the desired lower bounds follow from Theorem~\ref{kdim_cycle}.~\hfill
\end{proof}

\begin{remark}
Note that, for $n\ge 4$, Proposition~\ref{adim_p} is an immediate corollary of Theorem~\ref{kdim_path} when $k=1$.
\end{remark}

Next, we use Theorem~\ref{kdim_path} to strengthen the bound in Theorem \ref{upper_diam}.

\begin{corollary}\label{upper_diam2}
If $G$ is a connected graph of order $n \ge 2$ and diameter $d \ge 1$, then $\dim_k(G) \le n - (d+1 - \lfloor\frac{2(d+1)+4k-1}{3k+2}\rfloor)$ for all $k \ge 1$.
\end{corollary}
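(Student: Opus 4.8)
The plan is to mimic the proof of Theorem~\ref{upper_diam}, but instead of simply deleting the $\min\{d,k+1\}$ interior vertices of a diametral path one at a time, I would delete a near-optimal distance-$k$ resolving configuration on that path, as given by Theorem~\ref{kdim_path}. First I would fix vertices $u,v \in V(G)$ with $d(u,v)=d$ and a shortest $u$--$v$ path $P$ on the $d+1$ vertices $v_0=u, v_1, \ldots, v_d=v$. The key observation is that for any two vertices $v_i, v_j$ on this path, $d(v_i,v_j)=|i-j|$ in $G$ (it is a shortest path and $d(u,v)=d$ is the diameter, so no shortcuts exist among these vertices); hence distances within $P$, and therefore $d_k$-values within $P$, agree with those computed inside the abstract path graph $P_{d+1}$.

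Next I would take a minimum distance-$k$ resolving set $W$ of the path $P_{d+1}$ on the vertex set $\{v_0,\ldots,v_d\}$, so by Theorem~\ref{kdim_path} we have $|W| = \dim_k(P_{d+1}) \le \lfloor\frac{2(d+1)+4k-1}{3k+2}\rfloor$ (this bound holds uniformly across the three congruence cases, since $\lfloor\frac{2n+4k-1}{3k+2}\rfloor \ge \lfloor\frac{2n+3k-1}{3k+2}\rfloor$; one should also quickly check the small cases $d+1 \le k+2$ and $k+3 \le d+1 \le 3k+3$ against the same expression, which is routine). Set $S = (V(G)\setminus V(P)) \cup W = V(G) \setminus (V(P)\setminus W)$. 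Then $|S| = n - (d+1-|W|) \ge n - \big(d+1 - \lfloor\frac{2(d+1)+4k-1}{3k+2}\rfloor\big)$, so it suffices to show $S$ is a distance-$k$ resolving set of $G$; once that is done, $\dim_k(G) \le |S|$ gives the claimed inequality.

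To verify that $S$ distance-$k$ resolves $G$, take distinct $x,y \in V(G)$; I need some $z \in S$ with $d_k(x,z)\ne d_k(y,z)$. If at least one of $x,y$ lies outside $V(P)\setminus W$, then that vertex is itself in $S$ and resolves the pair (using $d_k(w,w)=0$). So assume both $x,y \in V(P)\setminus W$, say $x = v_a$ and $y = v_b$ with $a\ne b$. Because $W$ distance-$k$ resolves $P_{d+1}$, there is some $v_c \in W$ with $d_k^{P_{d+1}}(v_a,v_c) \ne d_k^{P_{d+1}}(v_b,v_c)$; and by the first paragraph's observation, $d_k^{P_{d+1}}(v_a,v_c) = d_k^G(v_a,v_c)$ and likewise for $b$, since all of $v_a,v_b,v_c$ lie on the shortest diametral path $P$. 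Hence $v_c \in W \subseteq S$ resolves the pair $x,y$ in $G$, completing the argument.

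The main obstacle — really the only place care is needed — is the claim that $d_k$-values along the chosen path $P$ are computed correctly inside $P_{d+1}$, i.e.\ that $G$ introduces no shortcuts between vertices of $P$. This follows from $d(u,v) = d = \diam(G)$: if $d(v_i,v_j) < |i-j|$ for some $i<j$, concatenating $v_0\cdots v_i$, a shortest $v_i$--$v_j$ path, and $v_j\cdots v_d$ would give a $u$--$v$ walk shorter than $d$, contradicting $d(u,v)=d$. One should also double-check the degenerate cases where $\dim_k(P_{d+1})=1$ or $2$ (small $d$) and where the congruence of $d+1$ modulo $3k+2$ lands in the middle case, but in every case the stated expression $\lfloor\frac{2(d+1)+4k-1}{3k+2}\rfloor$ is an upper bound for $\dim_k(P_{d+1})$, so the corollary follows.
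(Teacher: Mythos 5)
Your proposal is correct and follows essentially the same route as the paper: take a diametral geodesic, note that it is isometric (so $d_k$-values among its vertices coincide with those in $P_{d+1}$), apply Theorem~\ref{kdim_path} to get a distance-$k$ resolving set $W$ of the path of size at most $\lfloor\frac{2(d+1)+4k-1}{3k+2}\rfloor$, and verify that $(V(G)\setminus V(P))\cup W$ distance-$k$ resolves $G$. Your extra checks (no shortcuts along the geodesic, the small-order cases of Theorem~\ref{kdim_path}, and the uniform upper bound across the congruence cases) are just the details the paper leaves implicit.
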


\begin{proof}
Suppose that $u$ and $v$ are vertices in $G$ at distance $d$, and let $u = v_0, v_1, \ldots, v_d = v$ be a path of order $d+1$ with endpoints $u$ and $v$. If $X = \left\{v_0, \ldots, v_d\right\}$ and $H$ is the subgraph of $G$ restricted to $X$, then $H$ is a copy of $P_{d+1}$, so $H$ has a distance-$k$ resolving set $S$ of size at most $ \lfloor\frac{2(d+1)+4k-1}{3k+2}\rfloor$ by Theorem~\ref{kdim_path}. Then $(V(G) - X) \cup S$ is a distance-$k$ resolving set for $G$, which implies the corollary.
\end{proof}


\section{The effect of vertex or edge deletion on the distance-$k$ dimension of graphs}\label{s:delete}

Let $v$ and $e$, respectively, denote a vertex and an edge of a connected graph $G$ such that both $G-v$ and $G-e$ are connected graphs. First, we consider the effect of vertex deletion on distance-$k$ dimension of graphs. We recall the following results on the effect of vertex deletion on metric dimension and distance-$1$ dimension.

\begin{proposition}
\begin{itemize}
\item[(a)] \emph{\cite{wheel1}} $\dim(G)-\dim(G-v)$ can be arbitrarily large; 
\item[(b)] \emph{\cite{joc}} $\dim(G-v)-\dim(G)$ can be arbitrarily large.
\end{itemize}
\end{proposition}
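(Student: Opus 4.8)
The statement asks, in each part, for an infinite family of pairs $(G,v)$ realizing arbitrarily large values of the indicated difference; part~(a) follows instantly from results already recalled in the paper, while part~(b) is the real content.

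For part~(a) I would take $G=W_n=C_n+K_1$ for $n\ge 7$ and let $v$ be the hub vertex. Then $G-v=C_n$ is connected with $\dim(C_n)=2$ by Proposition~\ref{dim_cycle}, whereas $\dim(W_n)=\lfloor\tfrac{2n+2}{5}\rfloor$ by Theorem~\ref{dim_wheel}, so $\dim(G)-\dim(G-v)=\lfloor\tfrac{2n+2}{5}\rfloor-2\to\infty$. Equivalently one may use the fan $G=F_n=P_n+K_1$ with $v$ the hub, so that $\dim(G-v)=\dim(P_n)=1$ while $\dim(F_n)=\lfloor\tfrac{2n+2}{5}\rfloor$ for $n\ge 7$ by Theorem~\ref{dim_fan}. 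This is a one-line argument and I would not elaborate it further.

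For part~(b) I would, following \cite{joc}, exhibit connected graphs $G=G_n$ with a non-cut vertex $v_n$ (so $G_n-v_n$ is connected) for which $\dim(G_n-v_n)$ grows linearly in $n$ but $\dim(G_n)$ stays bounded. The key design constraint is that the large dimension of $G_n-v_n$ must \emph{not} be caused by a single large twin class: if $G_n-v_n$ had $t$ mutually twin vertices, then in $G_n$ the vertex $v_n$ is adjacent to at most some of them and so splits that set into at most two twin classes of total size $t$, whence $\dim(G_n)\ge t-2$ by Observation~\ref{obs_twin}(a) and the gap would be bounded. So one instead makes $G_n-v_n$ carry a rigid metric skeleton whose many leaves (or many disjoint twin pairs, via Observation~\ref{obs_twin}(a)) force $\dim(G_n-v_n)=\Omega(n)$ — for instance a spider with $n$ long pairwise-isometric legs, or a caterpillar with two pendant leaves at each of $n$ spine vertices — and then attaches $v_n$ to this skeleton so as to break the symmetry of the $n$ branches, together with a bounded amount of auxiliary structure, so that a bounded landmark set containing $v_n$ resolves $G_n$.

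The obstacle I expect to dominate is the upper bound $\dim(G_n)=O(1)$ in part~(b): attaching $v_n$ to the skeleton necessarily introduces shortcuts that compress the metric, and such compression tends to make deep vertices on different branches share a distance vector, which can by itself keep $\dim(G_n)$ of order $n$. The crux is therefore to place $v_n$ and the auxiliary edges so that, even after this compression, a bounded set of landmarks — with $v_n$ detecting which branch a vertex lies on and one or two further vertices detecting its depth — still separates every pair; establishing this requires a careful analysis of the distance vectors in $G_n$. Given such a construction, combining the $O(1)$ upper bound for $\dim(G_n)$ with the $\Omega(n)$ lower bound for $\dim(G_n-v_n)$ (immediate from the leaf/twin count) yields the statement.
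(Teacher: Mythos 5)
Part (a) of your proposal is correct and complete: the wheel (or fan) with the hub deleted gives $\dim(G)=\lfloor\frac{2n+2}{5}\rfloor$ versus $\dim(G-v)=2$ (or $1$), exactly via Theorem~\ref{dim_wheel}, Theorem~\ref{dim_fan} and Proposition~\ref{dim_cycle}, and this is the standard example behind the citation to \cite{wheel1}; the paper itself states this proposition as recalled background and gives no proof of its own.

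Part (b), however, has a genuine gap: you never exhibit a concrete family $(G_n,v_n)$, and the step you yourself flag as the crux --- verifying that a bounded landmark set resolves $G_n$ after attaching $v_n$ --- is left entirely open, so your argument is conditional ("given such a construction"). Moreover you over-constrain the problem by demanding $\dim(G_n)=O(1)$; the statement only needs the \emph{difference} to grow, and your worry about metric compression (deep vertices on different branches sharing codes after shortcuts through $v_n$) is a real obstruction to the $O(1)$ bound for the spider/caterpillar attachments you suggest. The construction the paper relies on (Figure~\ref{fig_vdeletion}, taken from \cite{broadcast}, in the same spirit as \cite{joc}) instead accumulates many \emph{small} twin classes: take $a$ triangles $\{x_i,y_i,z_i\}$ all joined to one common vertex, and let $v$ be adjacent to exactly one vertex of each triangle. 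In $G-v$ each triangle is a twin class of size $3$, so Observation~\ref{obs_twin}(a) forces two landmarks per triangle and $\dim(G-v)=2a$, while in $G$ each triangle still contains only a twin pair and $\dim(G)=a+1$; hence $\dim(G-v)-\dim(G)=a-1\to\infty$ even though $\dim(G)$ is unbounded. Note that your "design constraint" argument only excludes a \emph{single} large twin class (where the per-class loss is at most $2$); it does not exclude --- and in fact should have pointed you toward --- many classes of size $2$ or $3$ whose per-class gaps of $1$ accumulate, which is exactly how the cited proof works and which avoids the delicate distance-vector analysis your sketch defers.
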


\begin{proposition}\emph{\cite{broadcast}}\label{adim_vdel}
\begin{itemize}
\item[(a)] For any graph $G$, $\dim_1(G) \le \dim_1(G-v)+1$, where the bound is sharp.
\item[(b)] The value of $\dim_1(G-v)-\dim_1(G)$ can be arbitrarily large, as $G$ varies (see Figure~\ref{fig_vdeletion}).
\end{itemize}
\end{proposition}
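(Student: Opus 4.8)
The statement splits into two independent parts, and I would treat them separately.

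\textbf{Part (a).} The plan is the direct approach: start from a minimum adjacency (distance-$1$) resolving set $S$ of $G-v$ and show that $S\cup\{v\}$ is an adjacency resolving set of $G$. The one observation to record first is that for any two distinct vertices $x,y\in V(G)\setminus\{v\}$ we have $d_{1,G}(x,y)=d_{1,G-v}(x,y)$: adjacency among vertices other than $v$ is unaffected by deleting $v$, and a distinct non-adjacent pair has $d_1=2$ in both graphs regardless of connectivity. Hence every pair inside $V(G)\setminus\{v\}$ that $S$ resolves in $G-v$ is still resolved by $S$ in $G$, and every pair involving $v$ is resolved by $v$ itself, since $d_1(v,v)=0$ while $d_1(z,v)\ge 1$ for $z\ne v$. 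This yields $\dim_1(G)\le |S|+1=\dim_1(G-v)+1$. For sharpness I would take $G=K_n$ with $v$ any vertex: by Theorem~\ref{adj_characterization}(b), $\dim_1(K_n)=n-1$ and $\dim_1(K_{n-1})=n-2$, so equality holds.

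\textbf{Part (b).} The plan is to exhibit, for each integer $m\ge 1$, a connected graph $G_m$ with a vertex $v$ such that $G_m-v$ is connected, $\dim_1(G_m)\le m+1$, and $\dim_1(G_m-v)\ge 2^m-1$. The construction: take $m$ vertices $s_1,\dots,s_m$; for each nonzero string $b\in\{0,1\}^m$ take two further vertices $x_b,y_b$, each made adjacent exactly to $\{s_i:b_i=1\}$ and to nothing else among the $s$'s, $x$'s, and $y$'s; finally add $v$ adjacent exactly to $\{x_b:b\ne 0\}$. In $G_m-v$ the vertices $x_b$ and $y_b$ have the same open neighborhood $\{s_i:b_i=1\}$, so they are twins, and by Observation~\ref{obs_twin}(b) every adjacency resolving set of $G_m-v$ meets each of the $2^m-1$ pairwise disjoint pairs $\{x_b,y_b\}$; hence $\dim_1(G_m-v)\ge 2^m-1$. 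On the other hand $\{s_1,\dots,s_m,v\}$ is an adjacency resolving set of $G_m$: with respect to this ordered set, $x_b$ gets code $(\widetilde b,1)$ and $y_b$ gets code $(\widetilde b,2)$, where $\widetilde b\in\{1,2\}^m$ records $b$; each $s_i$ is the unique vertex with a $0$ in coordinate $i$; and $v$ is the unique vertex with a $0$ in the last coordinate. All these codes are pairwise distinct, so $\dim_1(G_m)\le m+1$, and therefore $\dim_1(G_m-v)-\dim_1(G_m)\ge 2^m-m-2\to\infty$. I would also note $G_m$ is connected because every $x_b,y_b$ is joined to some $s_i$, the $s_i$ share the common neighbor $x_{(1,\dots,1)}$, and $v\sim x_{(1,\dots,1)}$; deleting $v$ leaves this intact.

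\textbf{Main obstacle.} Part (a) is routine once the equality $d_{1,G}=d_{1,G-v}$ on $V(G)\setminus\{v\}$ is isolated. The real work is in part (b): since deleting $v$ changes adjacency only by removing the edges at $v$, the presence of $v$ can help resolution only by (i) providing one extra landmark and (ii) separating vertices that become twins once $v$ is removed. So the construction must force a single vertex $v$ to split exponentially many twin pairs while keeping $G_m$ cheaply resolvable with $m+1$ landmarks; the delicate bookkeeping is verifying that the $m+1$ codes above are genuinely pairwise distinct and that both $G_m$ and $G_m-v$ are connected (the latter being the hypothesis maintained throughout this section).
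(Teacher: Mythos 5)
Your proposal is correct, but it is worth noting that the paper does not reprove this statement: it quotes it from \cite{broadcast} and, for part (b), points to the family in Figure~\ref{fig_vdeletion}, where the hub-plus-triangles graphs have $\diam(G)=\diam(G-v)=2$, so $\dim_1$ coincides with $\dim$ by Observation~\ref{obs_diam}(a), and the cited values $\dim(G)=a+1$, $\dim(G-v)=2a$ (each triangle becomes a set of mutual twins once $v$ is gone, forcing two landmarks per triangle via Observation~\ref{obs_twin}) give a gap of $a-1$. Your part (a) is essentially the standard argument one expects in \cite{broadcast}: the observation that $d_1$ restricted to $V(G)\setminus\{v\}$ is unchanged by deleting $v$ is exactly the right reduction, and $K_n$ versus $K_{n-1}$ (Theorem~\ref{adj_characterization}(b)) correctly witnesses sharpness. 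Your part (b) takes a genuinely different construction: instead of a linear gap $2a-(a+1)$, you split $2^m-1$ disjoint twin pairs $\{x_b,y_b\}$ with a single vertex $v$ while resolving $G_m$ with the $m+1$ landmarks $s_1,\dots,s_m,v$, producing an exponential gap $2^m-m-2$. The verification is sound: the pairs $\{x_b,y_b\}$ are pairwise disjoint twins in $G_m-v$, so Observation~\ref{obs_twin}(b) gives the lower bound; the codes $(\widetilde b,1)$, $(\widetilde b,2)$, the codes of the $s_i$ (zero in coordinate $i$) and of $v$ (zero in the last coordinate) are pairwise distinct, so the upper bound holds; and both $G_m$ and $G_m-v$ are connected as you check. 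The paper's route buys uniformity across all $k$ (the same diameter-two family works for every $\dim_k$ at once), while your route buys a self-contained proof and a quantitatively stronger, exponential separation for the $k=1$ case.
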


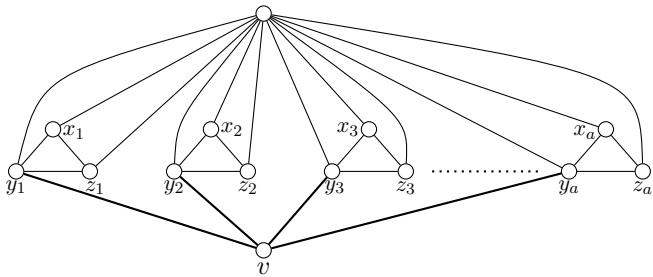
\begin{figure}[ht]
\centering
\begin{tikzpicture}[scale=.7, transform shape]

\node [draw, shape=circle, scale=.8] (0) at  (0, 3) {};
\node [draw, shape=circle, scale=.8] (1) at  (-4.7, 0) {};
\node [draw, shape=circle, scale=.8] (2) at  (-4, 0.8) {};
\node [draw, shape=circle, scale=.8] (3) at  (-3.3, 0) {};
\node [draw, shape=circle, scale=.8] (4) at  (-1.7, 0) {};
\node [draw, shape=circle, scale=.8] (5) at  (-1, 0.8) {};
\node [draw, shape=circle, scale=.8] (6) at  (-0.3, 0) {};
\node [draw, shape=circle, scale=.8] (7) at  (1.3, 0) {};
\node [draw, shape=circle, scale=.8] (8) at  (2, 0.8) {};
\node [draw, shape=circle, scale=.8] (9) at  (2.7, 0) {};
\node [draw, shape=circle, scale=.8] (10) at  (5.8, 0) {};
\node [draw, shape=circle, scale=.8] (11) at  (6.5, 0.8) {};
\node [draw, shape=circle, scale=.8] (12) at  (7.2, 0) {};
\node [draw, shape=circle, scale=.8] (v) at  (0, -1.5) {};

\node [scale=1.3] at (0,-1.85) {$v$};
\node [scale=1.1] at (-3.6,0.75) {$x_1$};
\node [scale=1.1] at (-4.7,-0.3) {$y_1$};
\node [scale=1.1] at (-3.2,-0.31) {$z_1$};
\node [scale=1.1] at (-0.6,0.8) {$x_2$};
\node [scale=1.1] at (-1.7,-0.3) {$y_2$};
\node [scale=1.1] at (-0.3,-0.31) {$z_2$};
\node [scale=1.1] at (1.6,0.8) {$x_3$};
\node [scale=1.1] at (1.35,-0.3) {$y_3$};
\node [scale=1.1] at (2.7,-0.31) {$z_3$};
\node [scale=1.1] at (6.1,0.75) {$x_a$};
\node [scale=1.1] at (5.8,-0.3) {$y_a$};
\node [scale=1.1] at (7.2,-0.31) {$z_a$};

\draw[thick,dotted] (3.2,0)--(5.3,0);

\draw(1)--(2)--(3)--(1);\draw(4)--(5)--(6)--(4);\draw(7)--(8)--(9)--(7);\draw(10)--(11)--(12)--(10);\draw(2)--(0);\draw(5)--(0);\draw(8)--(0);\draw(11)--(0);\draw(6)--(0);\draw(7)--(0);\draw(3)--(0);\draw(10)--(0);
\draw(1) .. controls(-4.25,1.85) .. (0);\draw(4) .. controls(-1.65,1) .. (0);\draw(9) .. controls(2.75,1) .. (0);\draw(12) .. controls(7.15,1.85) .. (0);

\draw[thick](v)--(1);\draw[thick](v)--(4);\draw[thick](v)--(7);\draw[thick](v)--(10);

\end{tikzpicture}
\caption{\small \cite{broadcast} Graphs $G$ such that $\dim(G-v) - \dim(G)=\dim_1(G-v)-\dim_1(G)$ can be arbitrarily large, where $a \ge 2$.}\label{fig_vdeletion}
\end{figure}

For graphs $G$ in Figure~\ref{fig_vdeletion}, note that $\diam(G)=\diam(G-v)=2$, where $a \ge2$. It was shown in~\cite{broadcast} that $\dim(G)=a+1$ and $\dim(G-v)=2a$. By Observation~\ref{obs_diam}(a), for any positive integer $k$, we have $\dim_k(G)=\dim(G)=a+1$ and $\dim_k(G-v)=\dim(G-v)=2a$, which implies the following

\begin{corollary}
Let $k$ be any positive integer. The value of $\dim_k(G-v)-\dim_k(G)$ can be arbitrarily large, as $G$ varies.
\end{corollary}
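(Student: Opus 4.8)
The plan is to leverage the family of graphs $G$ depicted in Figure~\ref{fig_vdeletion} together with the diameter-$2$ rigidity provided by Observation~\ref{obs_diam}(a). First I would recall from~\cite{broadcast} the two facts already quoted in the paragraph preceding the statement: for each $a \ge 2$, the graph $G$ in Figure~\ref{fig_vdeletion} satisfies $\dim(G) = a+1$ and $\dim(G-v) = 2a$, and moreover both $G$ and $G-v$ have diameter $2$. These are taken as given, so no reproving of the metric-dimension computations is needed.

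Next I would invoke Observation~\ref{obs_diam}(a), which asserts that for any connected graph of diameter at most $2$ we have $\dim_k = \dim$ for every positive integer $k$. Applying this to $G$ gives $\dim_k(G) = \dim(G) = a+1$, and applying it to $G-v$ gives $\dim_k(G-v) = \dim(G-v) = 2a$. Subtracting, $\dim_k(G-v) - \dim_k(G) = 2a - (a+1) = a-1$. Since $a \ge 2$ is arbitrary, letting $a \to \infty$ shows that $\dim_k(G-v) - \dim_k(G)$ is unbounded as $G$ varies, for every fixed positive integer $k$. This establishes the corollary.

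There is essentially no obstacle here: the only subtlety worth a sentence is checking that the hypotheses of Observation~\ref{obs_diam}(a) genuinely apply to both graphs, i.e., that $G$ and $G-v$ are each connected of diameter exactly $2$ (or at most $2$), which is immediate from the construction in Figure~\ref{fig_vdeletion} and is already noted in the excerpt. One could also remark that the same construction simultaneously witnesses unboundedness of $\dim(G-v) - \dim(G)$, consistent with the $k \to \infty$ behavior via Observation~\ref{obs_diam}(b), but this is not required for the statement.
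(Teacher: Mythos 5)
Your argument is exactly the paper's: it uses the graphs of Figure~\ref{fig_vdeletion}, the facts $\dim(G)=a+1$ and $\dim(G-v)=2a$ from~\cite{broadcast}, and Observation~\ref{obs_diam}(a) applied to both diameter-$2$ graphs to conclude $\dim_k(G-v)-\dim_k(G)=a-1\to\infty$. The proposal is correct and takes essentially the same route as the paper.
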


In contrast to the case for distance-$1$ dimension (see Proposition~\ref{adim_vdel}(a)), we show that $\dim_k(G)-\dim_k(G-v)$ can be arbitrarily large for $k \ge 2$.

\begin{proposition}
For any positive integer $k \ge 2$, $\dim_k(G)-\dim_k(G-v)$ can be arbitrarily large.
\end{proposition}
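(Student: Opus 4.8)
The plan is to exhibit, for each positive integer $k \ge 2$ and each target value $N$, a connected graph $G$ with a vertex $v$ such that $G-v$ is connected, $\dim_k(G-v)$ is small (bounded independently of $N$), but $\dim_k(G)$ is large (at least $N$). The natural source of such examples is a graph built from many twin-rich ``pendant gadgets'' attached to a central vertex $v$ whose removal collapses the gadgets into a short path or other low-dimension structure. Concretely, I would take $v$ to be a cut-like vertex joined to the endpoints of many disjoint paths (or small cliques), arranged so that in $G-v$ the components become paths of length roughly $3k+3$ or less, each contributing a bounded amount to the distance-$k$ dimension by Theorem~\ref{kdim_path}, while in $G$ the presence of $v$ forces $\diam(G) = 2$ and creates many twin classes, so that Observation~\ref{obs_twin}(b) (or Corollary~\ref{kdim_kpartite}) drives $\dim_k(G)$ up linearly in the number of gadgets.

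First I would fix the construction: let $G$ be obtained from $m$ disjoint copies $H_1, \dots, H_m$ of a fixed small graph (for instance $\overline{K_t}$ with $t \ge 2$, or a short path) together with a new vertex $v$ joined to every vertex of every $H_i$. Then $\diam(G) = 2$, so by Observation~\ref{obs_diam}(a) we have $\dim_k(G) = \dim(G)$ for every $k$, and by Theorem~\ref{dim_characterization}(b) or Theorem~\ref{adim_kpartite} this equals roughly $m(t-1)$, which exceeds $N$ once $m$ is large. Meanwhile $G - v$ is the disjoint union of the $H_i$; but since the proposition concerns connected graphs, I would instead attach the $H_i$ to $v$ through paths of controlled length so that $G-v$ is a single tree or a path of length $O(k)$ in a way that keeps $\dim_k(G-v)$ bounded. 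A clean choice: make $G - v$ a path $P_n$ with $n \le 3k+3$ (so $\dim_k(G-v) = 2$ by Theorem~\ref{kdim_path}(b)) while $G$ itself has enough symmetric structure under the extra vertex $v$ to force large distance-$k$ dimension — e.g. take $G - v$ to be a path on $3k+3$ vertices, and let $v$ be adjacent to a carefully chosen set of vertices of that path (or take several such paths and a common apex, sacrificing a little in the bound on $\dim_k(G-v)$).

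The key steps, in order, would be: (1) specify the graph $G$ and the vertex $v$ explicitly as a function of $m$ (the parameter we push to infinity) and $k$; (2) verify $G$ and $G-v$ are connected; (3) bound $\dim_k(G-v)$ from above by a constant, using Theorem~\ref{kdim_path} or Corollary~\ref{kdim_kpartite} together with the controlled diameter of $G-v$; (4) bound $\dim_k(G)$ from below by $\Omega(m)$, using either that many pairs of vertices of $G$ are twins (so any distance-$k$ resolving set must contain all but one vertex from each twin class, by Observation~\ref{obs_twin}(b)) or that $\diam(G) \le 2$ forces $\dim_k(G) = \dim(G)$ and then invoking the known metric-dimension lower bound; (5) conclude that $\dim_k(G) - \dim_k(G-v) \ge N$ for $m$ sufficiently large.

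The main obstacle I anticipate is reconciling two competing requirements: to keep $\dim_k(G-v)$ small we want $G-v$ to be a very simple graph (a short path, a graph of diameter $2$ with few twins, etc.), but to make $\dim_k(G)$ large we want $G$ to have many twins or a structure forcing high dimension — and these are hard to satisfy simultaneously since $G$ and $G-v$ differ by only one vertex. The resolution is to exploit that a single added vertex $v$ of high degree can dramatically shrink the diameter (from large down to $2$), which is exactly the regime where $\dim_k = \dim$ is forced by Observation~\ref{obs_diam}(a); so removing $v$ can simultaneously destroy the twin structure and leave behind a long but ``thin'' graph (a path) whose distance-$k$ dimension is controlled by Theorem~\ref{kdim_path}. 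Getting the precise incidences between $v$ and $G-v$ right — so that the twin classes in $G$ genuinely have size $\ge 2$ and $G-v$ genuinely has small $\dim_k$ — is the delicate bookkeeping, but it should go through for a construction analogous to the one already used in the excerpt for the $\dim_k(G-v) - \dim_k(G)$ direction (Figure~\ref{fig_vdeletion}), now with the roles of $G$ and $G-v$ reversed.
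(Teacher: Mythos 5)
Your overall instinct (a high-degree apex $v$ that collapses the diameter to $2$, so that $\dim_k(G)=\dim(G)$ by Observation~\ref{obs_diam}(a), while $G-v$ is a ``thin'' graph) points in the right direction, but the plan as written has two genuine flaws. First, steps (3) and (4) are jointly impossible for fixed $k$: you ask for $\dim_k(G-v)$ bounded by a constant while $\dim_k(G)\ge N$. By the sharp order bound of Section~\ref{s:general} (see also Corollary~\ref{cor_bound1}), a graph whose distance-$k$ dimension is at most a constant $C$ has order bounded by a function of $C$ and $k$ alone; hence $G$ would have bounded order and $\dim_k(G)\le |V(G)|-1$ would be bounded as well, so the difference could not be made arbitrarily large. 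Your concrete choice makes this vivid: if $G-v=P_n$ with $n\le 3k+3$, then $|V(G)|\le 3k+4$ and the difference is at most about $3k$, independent of any parameter you push to infinity. Second, the twin mechanism cannot separate $G$ from $G-v$: if $u,w\ne v$ are twins of $G$, then $N_G(u)-\{w\}=N_G(w)-\{u\}$ implies they are still twins of $G-v$, so Observation~\ref{obs_twin}(b) forces the very same vertices into every distance-$k$ resolving set of $G-v$. Deleting a vertex can only create new twins among the remaining vertices, never destroy existing ones, so any twin-counting lower bound you establish for $\dim_k(G)$ transfers verbatim to $\dim_k(G-v)$ and yields no gap. (Your $K_{1,mt}$-style example also leaves $G-v$ disconnected, which the setup of Section~\ref{s:delete} excludes, and repairing it with attachment paths destroys exactly the twin structure you need.)

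The paper's proof avoids both problems by letting the two dimensions tend to infinity at \emph{different rates} rather than keeping one bounded: take $G=C_{5(3k+2)x}+K_1$ with $v$ the hub. Since $\diam(G)=2$, Corollary~\ref{cor_wheel1} gives $\dim_k(G)=\lfloor\frac{2n+2}{5}\rfloor=2(3k+2)x$ where $n=5(3k+2)x$, while $G-v=C_n$ has $\dim_k(G-v)=10x$ by Theorem~\ref{kdim_cycle}(b); the difference $6(k-1)x\to\infty$ precisely because $\frac{2}{5}>\frac{2}{3k+2}$ when $k\ge 2$. So the correct repair of your approach is not to make $G-v$ a short path or to manufacture twins, but to choose $G-v$ to be a long cycle (or path), whose distance-$k$ dimension grows only like $\frac{2n}{3k+2}$, and to use the known wheel/fan dimension $\approx\frac{2n}{5}$ (via the diameter-$2$ collapse) as the larger quantity.
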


\begin{proof}
Let $k \ge 2$ and $x \ge 1$ be integers. Let $G=C_{5(3k+2)x}+K_1$ with the vertex $v$ in the $K_1$. Then $\dim_k(G)=\lfloor\frac{10(3k+2)x+2}{5}\rfloor=2(3k+2)x=6kx+4x$ by Corollary~\ref{cor_wheel1}, and $\dim_k(G-v)=\lfloor\frac{10(3k+2)x+3k-1}{3k+2}\rfloor=10x$ by Theorem~\ref{kdim_cycle}(b). So, $\dim_k(G)-\dim_k(G-v)=6kx+4x-10x=6(k-1)x \rightarrow \infty$ as $x \rightarrow \infty$ for $k \ge 2$.~\hfill 
\end{proof}

Next, we consider the effect of edge deletion on distance-$k$ dimension of graphs. Throughout the section, let $d_{H, k}(v_1, v_2)$ denote $d_k(v_1, v_2)$ in a graph $H$. We recall the following results on the effect of edge deletion on metric dimension and distance-$1$ dimension.

\begin{theorem}\emph{\cite{joc}}\label{dim_edge_deletion}
\begin{itemize}
\item[(a)] For any graph $G$ and any edge $e \in E(G)$, $\dim(G-e) \le \dim(G)+2$. 
\item[(b)] The value of $\dim(G) - \dim(G - e)$ can be arbitrarily large. 
\end{itemize}
\end{theorem}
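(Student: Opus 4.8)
The plan is to handle the two parts separately: prove the upper bound (a) directly by augmenting an optimal resolving set, and reduce (b) to the statement that adding a single edge can raise the metric dimension by an arbitrary amount, realized by an explicit family.

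For part (a), I would take a minimum resolving set $S$ of $G$ and show that $S \cup \{u,v\}$ resolves $G-e$, where $e=uv$; this immediately gives $\dim(G-e) \le |S|+2 = \dim(G)+2$. The key tool is a distance identity relating $d_G$ and $d_{G-e}$: for all $a,b \in V(G)$,
\[
d_G(a,b)=\min\{\,d_{G-e}(a,b),\ d_{G-e}(a,u)+1+d_{G-e}(v,b),\ d_{G-e}(a,v)+1+d_{G-e}(u,b)\,\}.
\]
The ``$\le$'' direction holds because each right-hand expression is the length of a walk in $G$; for ``$\ge$'' I would take a shortest $a$--$b$ path $P$ in $G$ and split into the case where $P$ avoids $e$ (so $|P| \ge d_{G-e}(a,b)$) and the case where $P$ crosses $e$ exactly once, as $u\to v$ or $v\to u$, in which case both sub-paths avoid $e$ (a shortest path uses each edge at most once), yielding the detour terms.

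To finish (a), suppose toward a contradiction that distinct $x,y$ are not resolved by $S\cup\{u,v\}$ in $G-e$. Then $d_{G-e}(x,w)=d_{G-e}(y,w)$ for every $w\in S$, and also $d_{G-e}(x,u)=d_{G-e}(y,u)$ and $d_{G-e}(x,v)=d_{G-e}(y,v)$. Feeding these equalities into the identity above with $a\in\{x,y\}$ and $b=w$, and noting that $d_{G-e}(u,w)$ and $d_{G-e}(v,w)$ do not depend on $x,y$, the two minima coincide, so $d_G(x,w)=d_G(y,w)$ for all $w\in S$, contradicting that $S$ resolves $G$. Hence $S\cup\{u,v\}$ resolves $G-e$. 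The only delicate point is the crossing case of the identity, namely checking that the two sub-paths avoid $e$, which is routine.

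For part (b), the plan is to reduce to the claim that one edge can raise $\dim$ arbitrarily much: if for each $m$ I can produce a connected graph $G'_m$ and a non-edge $uv$ with $\dim(G'_m+uv)-\dim(G'_m)\ge m$, then $G=G'_m+uv$ and $e=uv$ give the result, with $G-e=G'_m$ connected. The mechanism I would exploit is a \emph{critical shortcut}: choose $e=uv$ so that $d_G(u,v)=1$ while $d_{G-e}(u,v)$ is large. When $u,v$ are adjacent, the triangle inequality forces $|d_G(x,u)-d_G(x,v)|\le 1$ for every $x$, so the coordinate pair $(d(\cdot,u),d(\cdot,v))$ is confined to a thin band and loses almost all resolving power; deleting $e$ removes this constraint and lets the coordinates spread over a genuinely two-dimensional range. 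Concretely I would arrange $G-e$ to have bounded metric dimension (e.g.\ a caterpillar, a cycle-with-pendants, or a grid-like graph, so that $\dim(G-e)=O(1)$ via an explicit resolving set), with a distinguished far-apart pair $u,v$ whose shortcut collapses the distances of $\Omega(m)$ otherwise-separated vertices.

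The hard part will be the \emph{robust} lower bound $\dim(G)=\Omega(m)$: it is not enough that the natural pair $\{u,v\}$ fails to resolve $G$; every landmark set of size $o(m)$ must fail. I would establish this by exhibiting in $G$ a family of $\Omega(m)$ vertices together with a packing argument showing that each single landmark can separate only boundedly many of the induced unresolvable pairs under the band collapse, forcing $\Omega(m)$ landmarks. One design warning guides the construction: twin-based lower bounds are useless here, because deleting $e$ changes only the neighborhoods of $u$ and $v$ and hence the twin-class lower bound by at most $2$, so the gap must come from a genuinely metric distance-collapse argument rather than from adjacency twins. The two verifications, the $O(1)$ upper bound for $\dim(G-e)$ and the $\Omega(m)$ lower bound for $\dim(G)$, together with engineering a single gadget that meets both, are the crux of the argument.
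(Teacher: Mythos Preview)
This theorem is cited in the paper without proof; the only relevant comparison point is that the paper reproduces the argument for part~(a) (from \cite{joc}) when proving the distance-$k$ analogue in Proposition~\ref{kdim_edge_deletion}.

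For part~(a), your argument is correct and takes a slightly different route from the one the paper records. Both show that $S\cup\{u,v\}$ resolves $G-e$ when $S$ is a minimum resolving set of $G$ and $e=uv$. The paper's version proceeds by case analysis: given $x,y$ resolved by some $z\in S$ in $G$ but unresolved by $z$ in $G-e$, it splits on whether exactly one or both of $d_G(x,z),d_G(y,z)$ differ from their $G-e$ counterparts, and in each case follows a geodesic through $e$ to conclude that $u$ (or $w$) separates $x$ and $y$ in $G-e$. Your distance identity $d_G(a,b)=\min\{d_{G-e}(a,b),\,d_{G-e}(a,u)+1+d_{G-e}(v,b),\,d_{G-e}(a,v)+1+d_{G-e}(u,b)\}$ wraps this up in one stroke and avoids the case split; it is cleaner for ordinary metric dimension. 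The paper's geodesic-tracing argument, on the other hand, adapts directly to the truncated metric $d_k$, where your three-term minimum formula is no longer literally valid, which is why the paper presents that version.

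For part~(b), there is a genuine gap. You correctly identify the mechanism (a shortcut edge that forces $|d_G(x,u)-d_G(x,v)|\le 1$ and thereby collapses the resolving power of the pair $u,v$) and the warning that twin-based lower bounds cannot produce the gap. But you never exhibit a concrete family $(G_m,e_m)$, nor do you carry out either verification: the $O(1)$ upper bound for $\dim(G_m-e_m)$ or, more importantly, the $\Omega(m)$ lower bound for $\dim(G_m)$ against \emph{all} small landmark sets. The ``packing argument'' you allude to is exactly the missing idea, and without a specific graph it cannot be checked. Since the paper only cites this result, there is no construction here to compare against; but as a proof proposal, part~(b) remains a strategy outline rather than a proof.
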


\begin{theorem}\emph{\cite{broadcast}}\label{adim_edge_deletion}
For any graph $G$ and any edge $e \in E(G)$,
$\dim_1(G)-1 \le \dim_1(G-e) \le \dim_1(G)+1.$
\end{theorem}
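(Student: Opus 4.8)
The plan is to prove the two-sided bound $\dim_1(G)-1 \le \dim_1(G-e) \le \dim_1(G)+1$ by exhibiting how a distance-$1$ resolving set for one graph can be converted into a distance-$1$ resolving set for the other, at the cost of adding at most one vertex. Since distance-$1$ resolving sets see only adjacency information (each landmark $z$ reports whether $v=z$, whether $v \sim z$, or neither), deleting a single edge $e = xy$ changes the distance-$1$ metric code $r_{1,S}(v)$ of a vertex $v$ only in the coordinates corresponding to landmarks among $\{x, y\}$, and only for $v \in \{x, y\}$. So the effect of removing $e$ on codes is extremely localized, which is exactly what makes a $\pm 1$ swing possible.

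For the upper bound $\dim_1(G-e) \le \dim_1(G)+1$: let $S$ be a minimum distance-$1$ resolving set of $G$, and set $S' = S \cup \{x\}$ (if $x \notin S$; if $x \in S$ already take $S' = S \cup \{y\}$, and if both are in $S$ then $S'=S$ works directly). I claim $S'$ is a distance-$1$ resolving set of $G-e$. Take distinct $u, v \in V(G-e)$. If neither $u$ nor $v$ lies in $\{x,y\}$, then for every landmark $z \notin \{x,y\}$ the adjacency status of $u$ and $v$ to $z$ is unchanged, and some such $z \in S$ separated them in $G$ unless the only separating landmark was $x$ or $y$ — but if $x$ separated $u$ from $v$ in $G$ via adjacency not involving the edge $e$, it still does; the only subtlety is a landmark $z \in \{x,y\}\cap S$ that separated $u,v$ purely because $z \sim u$ in $G$ while $z\not\sim v$, with that adjacency being the deleted edge — impossible, since $u,v \notin \{x,y\}$. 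The genuinely new cases are when $u$ or $v$ equals $x$ or $y$: here the extra landmark $x \in S'$ reports $r_{1,S'}(x)$ having a $0$ in its own coordinate, distinguishing $x$ from every other vertex; and pairs like $\{y, w\}$ for $w \ne x$ are still separated because $x$'s coordinate is $0$ only at $x$. One checks the remaining short list of pairs directly (the pair $\{x,y\}$ is separated by the landmark $x$).

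For the lower bound $\dim_1(G-e) \ge \dim_1(G)-1$, equivalently $\dim_1(G) \le \dim_1(G-e)+1$, the argument is symmetric: let $T$ be a minimum distance-$1$ resolving set of $G-e$ and set $T' = T \cup \{x\}$ (adjusting as above if $x$ or $y$ is already in $T$). Adding back the edge $e=xy$ only affects the codes of $x$ and $y$, and only in coordinates at landmarks in $\{x,y\}$; again the potential collisions created are a short finite list of pair-types involving $x$ or $y$, each killed by the presence of $x$ as a landmark in $T'$ (its own coordinate is $0$ exactly at $x$). I would organize this half as: for any pair $u,v$ with $\{u,v\}\cap\{x,y\}=\emptyset$, a separating landmark in $T$ still works in $G$; for pairs touching $\{x,y\}$, argue case by case that $x\in T'$ or some unaffected landmark in $T$ separates them.

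The main obstacle — really the only place care is needed — is the bookkeeping for pairs $u,v$ where at least one of them is in $\{x,y\}$ and where the separating landmark in the old graph happened to be $x$ or $y$ itself: one must verify that adding $x$ (or $y$) as a guaranteed landmark, which always distinguishes that vertex from everything else via its self-coordinate $0$, repairs all such collisions, and that no collision is created among pairs of vertices both outside $\{x,y\}$. A clean way to handle this uniformly is to note that if a landmark set $W$ with $x\in W$ fails to resolve some pair in $G-e$ but $W\setminus\{x\}$ does resolve it in $G$ (or vice versa), then that pair must consist of $x$ together with some vertex adjacent to $x$ through the edge $e$ — i.e. the pair is $\{x,y\}$ itself — and this single pair is separated by $x\in W$. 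I would state this as a small lemma-free observation inside the proof and then the two inequalities drop out. I do not expect any heavy computation; the whole proof is a careful case analysis on how an adjacency code coordinate can change when one edge is added or removed.
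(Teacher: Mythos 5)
Your proposal is correct. Note that the paper itself gives no proof of this statement -- it is quoted from \cite{broadcast} -- so there is nothing in-paper to compare against; your argument (codes $d_1(v,z)\in\{0,1,2\}$ change only for the vertex--landmark pairs $(x,y)$ and $(y,x)$ when $e=xy$ is deleted or restored, so adding one of $x,y$ as a landmark, whose self-coordinate $0$ resolves every pair containing it, repairs all possible collisions) is the natural one and it goes through in both directions with your selection rule. Two small wording slips worth fixing, though neither is a gap: for a pair $\{y,w\}$ with $w\notin\{x,y\}$ the separation survives because the old separating landmark's coordinates for $y$ and $w$ are unchanged (not because ``$x$'s coordinate is $0$ only at $x$'', which is irrelevant to that pair); and in your closing observation the offending pair need not be $\{x,y\}$ itself -- the argument only shows it must contain the added landmark vertex -- but that is exactly what is needed, since the self-coordinate of that landmark then resolves the pair.
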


The proof for Theorem~\ref{dim_edge_deletion}(a) in~\cite{joc}, adjusted for the case of distance-$k$ dimension, provides the following result. We include its proof to be self-contained.

\begin{proposition}\label{kdim_edge_deletion}
Let $k \ge 3$ be any integer. For any graph $G$ and any edge $e\in E(G)$, $\dim_k(G-e) \le \dim_k(G)+2$.
\end{proposition}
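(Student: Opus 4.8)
The plan is to mimic the metric-dimension argument of \cite{joc}. Let $e = xy \in E(G)$ and let $W$ be a minimum distance-$k$ resolving set of $G$, so $|W| = \dim_k(G)$. I claim $S := W \cup \{x, y\}$ is a distance-$k$ resolving set of $G - e$, which immediately gives $\dim_k(G-e) \le |W| + 2 = \dim_k(G) + 2$. Since $S$ trivially resolves any pair of which at least one member lies in $S$, the only pairs in question are distinct $u, v \in V(G-e) - S$ with $r_{k,S}^{G-e}(u) = r_{k,S}^{G-e}(v)$; I must derive a contradiction from this.

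First I would record the basic comparison between distances in $G$ and in $G-e$: for all vertices $p, q$, $d_G(p,q) \le d_{G-e}(p,q)$, and every $p$--$q$ path in $G-e$ is a path in $G$, so a $p$--$q$ path in $G-e$ that is shortest in $G$ shows the two distances agree unless every shortest $p$--$q$ path in $G$ uses the edge $e$. The key step is the standard observation: if $d_G(p,q) \ne d_{G-e}(p,q)$, then every shortest $p$--$q$ path in $G$ contains $e$, hence $d_G(p,q) = d_G(p,x) + 1 + d_G(y,q)$ (or with $x,y$ swapped), and moreover $d_{G-e}(p,x) = d_G(p,x)$ and $d_{G-e}(q,y) = d_G(q,y)$ because a shortest $p$--$x$ path in $G$ that used $e$ could be shortened. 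Now suppose $u,v \notin S$ have the same distance-$k$ code in $G-e$ with respect to $S$. In particular $d_{G-e,k}(u,x) = d_{G-e,k}(v,x)$ and $d_{G-e,k}(u,y) = d_{G-e,k}(v,y)$, and since $x,y \in S$ these values equal the $G$-distances capped at $k+1$ (by the remark just made, $d_{G-e}(\cdot, x) = d_G(\cdot,x)$ and similarly for $y$ whenever these are the relevant quantities — care is needed here because the equality $d_{G-e}(p,x)=d_G(p,x)$ was argued only when $d_G(p,q)\ne d_{G-e}(p,q)$, so I would instead argue directly: a shortest $u$--$x$ path in $G$ using $e$ would pass through $y$, making it non-shortest, so $d_G(u,x) = d_{G-e}(u,x)$ outright, and likewise for $v,x$ and for the $y$-distances).

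With that in hand, for every landmark $w \in W$ I want to show $d_{G,k}(u,w) = d_{G,k}(v,w)$, contradicting that $W$ resolves $u,v$ in $G$. Fix $w \in W$. If $d_G(u,w) = d_{G-e}(u,w)$ and $d_G(v,w) = d_{G-e}(v,w)$, then the capped values agree because the $G-e$-capped values agree by hypothesis. If $d_G(u,w) \ne d_{G-e}(u,w)$, then $d_G(u,w) = \min\{ d_G(u,x) + 1 + d_G(w,y),\ d_G(u,y) + 1 + d_G(w,x) \}$; substituting the equalities $d_G(u,x) = d_{G-e}(u,x) = d_{G-e}(v,x)$ (capped appropriately) and similarly for $y$, and using the same expansion for $v$ if $d_G(v,w)\ne d_{G-e}(v,w)$, one gets $d_G(u,w) = d_G(v,w)$; the remaining mixed subcase (one of $u,v$ has equal $G$ and $G-e$ distance to $w$, the other does not) is handled by noting $d_G(v,w) \le d_{G-e}(v,w) = d_{G-e}(u,w)$ and $d_G(v,w) \ge$ the "through-$e$" quantity, pinning it down. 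Here is where the hypothesis $k \ge 3$ enters: the through-$e$ expansion $d_G(u,x) + 1 + d_G(y,w)$ can only help recover the true distance when it does not already exceed $k+1$; when all the relevant sub-distances are $\le k$ the cap behaves linearly enough for the substitutions to go through, and $k \ge 3$ is what guarantees the three summands $d(u,x), 1, d(y,w)$ individually stay in the uncapped range in the cases that matter. I would isolate this cap bookkeeping as the single technical lemma to check carefully.

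\textbf{Main obstacle.} The genuine difficulty is not the graph-theoretic idea — that is exactly \cite{joc} — but the interaction of the truncation $d_k = \min\{d, k+1\}$ with the identity $d_G(p,q) = d_G(p,x) + 1 + d_G(y,q)$. When some of these quantities are at or beyond the cap $k+1$, the equation $d_{G,k}(p,q) = d_{G,k}(p,x) + 1 + d_{G,k}(y,q)$ is simply false, so I cannot blindly translate the uncapped argument. The right move is to case on whether $d_G(u,w) \le k+1$: if it is, all the intermediate distances on a shortest path are $\le k+1$ and the substitutions are legitimate; if $d_G(u,w) = k+1$ exactly (it cannot exceed $k+1$ when capped), I show $d_G(v,w) \ge k+1$ too, so both caps read $k+1$ and agree. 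Making this dichotomy airtight, and verifying that $k \ge 3$ (rather than $k \ge 2$) is exactly what is needed — recall the companion result $\dim_2(G-e) \le \dim_2(G) + 1$ is proved separately, suggesting the $k=2$ case genuinely behaves differently — is the crux of the write-up.
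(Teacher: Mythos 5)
Your skeleton is the paper's: take a minimum distance-$k$ resolving set $W$ of $G$, add both endpoints of $e$, and show the enlarged set resolves $G-e$ (the paper argues the equivalent statement in the other direction, taking a pair separated in $G$ by some $z\in W$ but not separated by $z$ in $G-e$, and showing an endpoint of $e$ separates the pair in $G-e$). But as written your sketch has two genuine gaps. First, the step you substitute for the conditional distance-preservation fact is false: a shortest $u$--$x$ path in $G$ that uses $e=xy$ simply ends with the edge $yx$ and can perfectly well be shortest, so $d_G(u,x)=d_{G-e}(u,x)$ does \emph{not} hold ``outright.'' In the $5$-cycle with vertices $u,y,x,b,a$ in cyclic order and $e=xy$, we have $d_G(u,x)=2$ but $d_{G-e}(u,x)=3$. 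The correct statement is exactly the conditional one you set aside: if every $u$--$w$ geodesic in $G$ uses $e$ with $d_G(u,w)=d_G(u,x)+1+d_G(y,w)$, then some $u$--$x$ geodesic avoids $e$ (otherwise $d_G(u,x)=d_G(u,y)+1$ forces $d_G(u,w)\ge d_G(u,w)+2$), hence $d_{G-e}(u,x)=d_G(u,x)$; this is what the paper's Case 1 uses, and you are not entitled to convert the hypothesis $d_{G-e,k}(u,x)=d_{G-e,k}(v,x)$ into an equality of $G$-distances at the endpoints in general. Second, the part you yourself flag as the crux---the cap bookkeeping, in particular the mixed subcase where only one of $d_G(u,w)$, $d_G(v,w)$ changes upon deleting $e$---is left at the level of ``pinning it down,'' which is where the actual content lies. (That subcase in fact resolves either trivially, because both capped values read $k+1$, or by a \emph{contradiction} with the assumed equality of the $G-e$ codes at the endpoint of $e$ nearer to $u$; this mirrors the paper's Case 1 conclusion that this endpoint separates the pair in $G-e$. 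It does not resolve by deducing $d_G(u,w)=d_G(v,w)$ at the landmark $w$.)

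Your diagnosis of the hypothesis $k\ge 3$ is also a misreading: nothing in the argument needs $k\ge 3$, and the paper's remark immediately after the proposition states that the proposition and its proof hold for $k=1$ and $k=2$ as well. The hypothesis appears only because for $k\in\{1,2\}$ the stronger bound $\dim_k(G-e)\le\dim_k(G)+1$ holds (proved separately), while the $+2$ bound is sharp only for $k\ge 3$. So organizing the write-up around ``verifying that $k\ge3$ is exactly what is needed'' would have you hunting for a necessity that is not there; the case analysis on whether the relevant distances are at most $k$ or at least $k+1$ suffices uniformly in $k$.
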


\begin{proof}
Let $S$ be a minimum distance-$k$ resolving set for $G$, and let $e=uw$. We show that $S\cup\{u,w\}$ is a distance-$k$ resolving set for $G-e$. Let $x$ and $y$ be distinct vertices in $V(G-e)=V(G)$ such that, for some $z\in S$, $d_{G, k}(x,z) \neq d_{G, k}(y,z)$ and $d_{G-e,k}(x,z)=d_{G-e,k}(y,z)$. We consider two cases.

\emph{Case 1: $d_{G,k}(x,z)=d_{G-e,k}(x,z)$ or $d_{G,k}(y,z)=d_{G-e,k}(y,z)$, but not both.} Suppose $d_{G,k}(y,z)=d_{G-e,k}(y,z)$. Then $d_{G,k}(y,z)=d_{G-e,k}(y,z)=d_{G-e,k}(x,z)>d_{G,k}(x,z)$, $d_{G, k}(x,z) \le k$, and the edge $e$ must lie on every $x-z$ geodesic in $G$. So, up to transposing the labels $u$ and $w$, we have $d_{G,k}(x,u)+d_{G,k}(u,w)+d_{G,k}(w,z)=d_{G,k}(x,z)$. Notice that $d_{G,k}(x,u)=d_{G-e,k}(x,u)$ since there is an $x-u$ geodesic in $G$ that does not use the edge $e$. Since $d_{G,k}(x,u)+d_{G,k}(u,z)=d_{G,k}(x,z)<d_{G,k}(y,z) \le d_{G,k}(y,u)+d_{G,k}(u,z)$, we must have $d_{G,k}(x,u)<d_{G,k}(y,u)$. Then $d_{G-e,k}(x,u)=d_{G,k}(x,u)<d_{G,k}(y,u)\le d_{G-e,k}(y,u)$ and $d_{G-e,k}(x,u) \le k-1$.

\emph{Case 2: $d_{G,k}(x,z) \neq d_{G-e,k}(x,z)$ and $d_{G,k}(y,z) \neq d_{G-e,k}(y,z)$.} In this case, the edge $e$ must lie on every $x-z$ geodesic and on every $y-z$ geodesic in $G$. Moreover, we must have either $d_{G,k}(x,z) <d_{G,k}(y,z) \le k$ or $d_{G,k}(y,z)<d_{G,k}(x,z) \le k$. Notice that if a geodesic from some vertex $a$ to another vertex $c$ traverses the edge $e$ in the order $u,w$ (as apposed to $w,u$), then a geodesic containing $e$ from any vertex $b$ to $c$ must also traverse $e$ in the order $u,w$. Suppose that $u$ is traversed before $w$ by an $x-z$ geodesic  and a $y-z$ geodesic (directed towards $z$) in $G$. Then an $x-u$ geodesic and a $y-u$ geodesic, neither containing the edge $e$, are obtained by removing a $u-z$ geodesic in $G$ from the $x-z$ geodesic and $y-z$ geodesic respectively. Thus, $d_{G-e,k}(x,u) \neq d_{G-e,k}(y,u)$.~\hfill
\end{proof}

\begin{remark}\label{kdim_edge_deletion12}
Note that Proposition~\ref{kdim_edge_deletion} and its proof hold when $k=1$ or $k=2$. For $k\in\{1,2\}$, we obtain the stronger result that $\dim_k(G-e) \le \dim_k(G)+1$. For $k = 1$ this follows from Theorem~\ref{adim_edge_deletion}. To see why it is true for $k = 2$, let $S$ be a minimum distance-$2$ resolving set of $G$, let $e=uw$, and let $x$ and $y$ be distinct vertices in $V(G-e)=V(G)$ such that $d_{G, 2}(x,z) \neq d_{G, 2}(y,z)$ and $d_{G-e,2}(x,z)=d_{G-e,2}(y,z)$ for some $z\in S$; further, suppose that the edge $e$ lies on every $x-z$ geodesic in $G$ and $d_{G,2}(x,u) < d_{G,2}(x,w)$.

First, we consider Case 1. Then $0<d_{G,2}(x,z) \le 2$; notice that $x\neq z$ since the edge $e$ lies on every $x-z$ geodesic in G. If $d_{G,2}(x,z)=1$, then $e=uw=xz$; if $d_{G,2}(x,z)=2$, then $x=u$ or $xu\in E(G)$. In each case, $S\cup\{u\}$ forms a distance-$2$ resolving set for $G-e$. Next, we consider Case 2. Then $d_{G,2} (x,z) < d_{G,2} (y,z) \le 2$ or $d_{G,2}(y,z)<d_{G,2}(x,z)\le 2$, say the former; then $e=uw=xz$ and $x$ lies on every $y-z$ geodesic in $G$. So $S \cup \{u\}$ forms a distance-$2$ resolving set for $G-e$. Therefore, $\dim_2(G-e) \le \dim_2(G)+1$. For graphs $G$ satisfying $\dim_2(G-e) = \dim_2(G)+1$, see Figure~\ref{fig_kdim_edge_old}, where $a,b,c\ge2$; one can easily check that $R=(\cup_{i=1}^{a-1}\{x_i\})\cup(\cup_{i=1}^{b-1}\{y_i\})\cup(\cup_{i=1}^{c-1}\{z_i\})$ forms a minimum distance-$2$ resolving set of $G-e$ with $|R|=a+b+c-3$ and that $R'=R-\{z_1\}$ forms a minimum distance-$2$ resolving set for $G$ with $|R'|=a+b+c-4$.
\end{remark}

\begin{remark}
The bound in Proposition~\ref{kdim_edge_deletion} is sharp. For any integer $k \ge 3$, let $G$ be the graph in Figure~\ref{fig_kdim_edge_old} and let $e=x_1z_1$. Let $L_1=\cup_{i=1}^{a}\{x_i\}$, $L_2=\cup_{i=1}^{b}\{y_i\}$ and $L_3=\cup_{i=1}^{c}\{z_i\}$, where $a,c \ge 3$ and $b\ge 2$.

First, we show that $\dim_k(G-e)=a+b+c-3$. Note that any two vertices in $L_i$ are twin vertices in $G-e$, where $i\in\{1,2,3\}$. So, for any distance-$k$ resolving set $S$ of $G-e$, we have $|S \cap L_1| \ge a-1$, $|S \cap L_2| \ge b-1$, and $|S \cap L_3| \ge c-1$ by Observation~\ref{obs_twin}(b); thus, $\dim_k(G-e) \ge a+b+c-3$. On the other hand, $(L_1 \cup L_2 \cup L_3)-\{x_1, y_1,z_1\}$ forms a distance-$k$ resolving set of $G-e$, and hence $\dim_k(G-e) \le a+b+c-3$. Thus, $\dim_k(G-e)=a+b+c-3$.

Second, we show that $\dim_k(G)=a+b+c-5$. For any distance-$k$ resolving set $S'$ of $G$, we have $|S' \cap (L_1-\{x_1\})| \ge a-2$, $|S' \cap L_2| \ge b-1$, and $|S' \cap (L_3-\{z_1\})| \ge c-2$ by Observation~\ref{obs_twin}(b); thus, $\dim_k(G) \ge a+b+c-5$. Since $(L_1\cup L_2\cup L_3)-\{x_1, x_2, y_1, z_1, z_2\}$ forms a distance-$k$ resolving set of $G$, $\dim_k(G) \le a+b+c-5$. So, $\dim_k(G)=a+b+c-5$.

Therefore, $\dim_k(G-e)=\dim_k(G)+2$ for $k \ge 3$.
\end{remark}

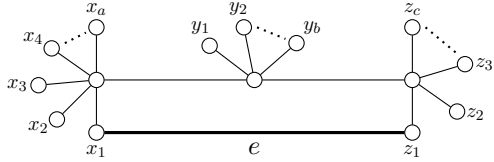
\begin{figure}[ht]
\centering
\begin{tikzpicture}[scale=.7, transform shape]

\node [draw, shape=circle, scale=.8] (0) at  (0, 0) {};
\node [draw, shape=circle, scale=.8] (1) at  (3, 0) {};
\node [draw, shape=circle, scale=.8] (2) at  (6, 0) {};

\node [draw, shape=circle, scale=.8] (01) at  (0, 1) {};
\node [draw, shape=circle, scale=.8] (02) at  (-0.85, 0.6) {};
\node [draw, shape=circle, scale=.8] (03) at  (-1.1, -0.1) {};
\node [draw, shape=circle, scale=.8] (04) at  (-0.75, -0.75) {};
\node [draw, shape=circle, scale=.8] (05) at  (0, -1) {};

\node [draw, shape=circle, scale=.8] (11) at  (2.15, 0.65) {};
\node [draw, shape=circle, scale=.8] (12) at  (2.8, 1) {};
\node [draw, shape=circle, scale=.8] (13) at  (3.8, 0.7) {};

\node [draw, shape=circle, scale=.8] (21) at  (6, 1) {};
\node [draw, shape=circle, scale=.8] (22) at  (7, 0.3) {};
\node [draw, shape=circle, scale=.8] (23) at  (6.85, -0.6) {};
\node [draw, shape=circle, scale=.8] (24) at  (6, -1) {};

\node [scale=1] at (0,1.35) {$x_a$};
\node [scale=1] at (-1.2,0.8) {$x_4$};
\node [scale=1] at (-1.5,-0.1) {$x_3$};
\node [scale=1] at (-1.1,-0.9) {$x_2$};
\node [scale=1] at (0,-1.35) {$x_1$};
\node [scale=1] at (4.1,1) {$y_b$};
\node [scale=1] at (2.7,1.35) {$y_2$};
\node [scale=1] at (1.98,1) {$y_1$};
\node [scale=1] at (6,1.35) {$z_c$};
\node [scale=1] at (7.37,0.3) {$z_3$};
\node [scale=1] at (7.2,-0.65) {$z_2$};
\node [scale=1] at (6,-1.35) {$z_1$};

\node [scale=1.3] at (3,-1.3) {$e$};

\draw(05)--(0)--(1)--(2)--(23);\draw(11)--(1)--(12);\draw(21)--(2)--(22);\draw(01)--(0);\draw(02)--(0)--(03);\draw[very thick](05)--(24);\draw(0)--(04);\draw(1)--(13);\draw(2)--(24);
\draw[thick, dotted] (-0.6,0.7)--(-0.2,0.9);\draw[thick, dotted] (3.05,1)--(3.59,0.78);\draw[thick, dotted] (6.25,1)--(6.85,0.5);

\end{tikzpicture}
\caption{\small Graphs $G$ with $\dim_2(G-e)=\dim_2(G)+1$ and $\dim_k(G-e)=\dim_k(G)+2$ for $k \ge 3$.}\label{fig_kdim_edge_old}
\end{figure}

In contrast to Theorem~\ref{adim_edge_deletion}, we show that $\dim_k(G)-\dim_k(G-e)$ can be arbitrarily large for any integer $k \ge 2$.

\begin{theorem}
For any integer $k\ge 2$, the value of $\dim_k(G)-\dim_k(G-e)$ can be arbitrarily large.
\end{theorem}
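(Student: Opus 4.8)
The plan is to produce, for each fixed integer $k\ge 2$ and each target $N$, a connected graph $G$ and an edge $e\in E(G)$ with $G-e$ connected such that $\dim_k(G)$ grows with an auxiliary parameter while $\dim_k(G-e)$ stays bounded (or grows much more slowly), so that $\dim_k(G)-\dim_k(G-e)>N$. Note first that the hypothesis $k\ge 2$ is essential: by Theorem~\ref{adim_edge_deletion} one has $\dim_1(G)-\dim_1(G-e)\le 1$ for every $G$ and $e$, so any witnessing family must behave genuinely differently at $k=1$. In fact this forces at least one of $G$, $G-e$ to have diameter at least $3$: if both had diameter $\le 2$, then by Observation~\ref{obs_diam}(a) we would have $\dim_k=\dim=\dim_1$ on both graphs for every $k$, and then $\dim_1(G)-\dim_1(G-e)=\dim(G)-\dim(G-e)$ could not exceed $1$.

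The cleanest route I would try first is to keep both diameters small, namely $\diam(G)\le k+1$ and $\diam(G-e)\le k+1$, so that Observation~\ref{obs_diam}(b) gives $\dim_k(G)=\dim(G)$ and $\dim_k(G-e)=\dim(G-e)$; this reduces the claim to the statement that $\dim(G)-\dim(G-e)$ can be arbitrarily large, which is Theorem~\ref{dim_edge_deletion}(b). The work in this route is to check that a witnessing family for Theorem~\ref{dim_edge_deletion}(b) can be chosen respecting these diameter constraints (by the previous paragraph, necessarily with $\diam(G)$ or $\diam(G-e)$ equal to $3$), after which $k$ fixed and $\ge 2$ does the rest. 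Failing that, I would build a direct construction with a cycle/wheel flavour, using the computations already in the paper: $\dim_k$ of wheels and fans is pinned down by Corollaries~\ref{cor_wheel1} and~\ref{cor_fan1}, and of cycles and paths by Theorems~\ref{kdim_cycle} and~\ref{kdim_path}; these, with Observation~\ref{obs_twin} when twin classes appear, are exactly the tools used for the preceding proposition on $\dim_k(G)-\dim_k(G-v)$, and I expect the edge version to come from an analogous but more delicate choice in which $e$ sits in a substructure whose presence forces many distance-$k$ codes to coincide (keeping $\dim_k(G)$ large) and whose removal spreads the relevant distances out (so that a small distance-$k$ resolving set of $G-e$ exists). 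One caveat to respect while designing the family: a single edge deletion alters the neighbourhoods of only its two endpoints, so it can destroy at most two twin classes and hence can drop the twin-based lower bound on $\dim_k$ by at most two; the gap must therefore come from genuinely metric (non-twin) behaviour, which is why cycle/wheel ingredients rather than mere blow-ups into twins are needed.

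The main obstacle is the lower bound $\dim_k(G)\ge(\text{large})$: one must show that no small vertex set resolves $G$, i.e., for every candidate set $S$ of the relevant size exhibit distinct $x,y$ with $r_{k,S}(x)=r_{k,S}(y)$. For cycle/wheel-type ingredients this is carried out as in the proofs of Theorem~\ref{kdim_cycle} and Corollary~\ref{cor_wheel1} — gap arguments in the spirit of Lemma~\ref{lem_cycle}, respectively diameter-$2$ arguments via Observation~\ref{obs_diam}(a); for the small-diameter reduction route the lower bound is inherited directly from Theorem~\ref{dim_edge_deletion}(b). By contrast the companion upper bound $\dim_k(G-e)\le(\text{small})$ should be routine: one writes down an explicit small distance-$k$ resolving set of $G-e$ and checks that all its codes are distinct. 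Throughout one also verifies that $G$ and $G-e$ remain connected. Letting the auxiliary parameter tend to infinity then yields $\dim_k(G)-\dim_k(G-e)\to\infty$ for every fixed $k\ge 2$.
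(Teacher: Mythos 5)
Your submission is a strategy outline rather than a proof: neither of your two routes is actually carried out, and the heart of the statement --- exhibiting an explicit family $(G,e)$ and proving both a large lower bound on $\dim_k(G)$ and a small upper bound on $\dim_k(G-e)$ --- is missing. Route 1 reduces the claim to Theorem~\ref{dim_edge_deletion}(b) \emph{plus} the extra hypothesis that the witnessing family can be chosen with $\diam(G)\le 3$ and $\diam(G-e)\le 3$ (so that Observation~\ref{obs_diam}(b) applies for every $k\ge 2$); you explicitly flag this diameter condition as ``the work in this route'' but never verify it, and it is not part of the cited theorem, so as written the reduction is incomplete. Route 2 (``cycle/wheel flavour, analogous but more delicate'') names tools but specifies no graph, no edge, and no bounds, so there is nothing to check. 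By contrast, the paper's proof is a concrete construction (Figure~\ref{fig_kdim_edge}): $a$ twin pairs $x_i,y_i$ with common neighbourhood $\{z_i,z_i'\}$, extra vertices $t_i$ and $v$, and $e$ a hub--$v$ edge; Observation~\ref{obs_twin}(b) forces one vertex per twin pair into any distance-$k$ resolving set, in $G$ one shows $R_k\{z_i,z_i'\}=\{z_i,z_i',t_i\}$ so that $\dim_k(G)=2a$, while in $G-e$ the single vertex $v$ distinguishes every pair $z_i,z_i'$ (distances $2$ versus $\ge k+1\ge 3$), giving $\dim_k(G-e)=a+1$ and a gap of $a-1$.

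Two of your structural remarks are sound and in fact consistent with the paper's construction: both graphs cannot have diameter at most $2$ (else Observation~\ref{obs_diam}(a) and Theorem~\ref{adim_edge_deletion} cap the gap at $1$), and a single edge deletion can lower the purely twin-based lower bound by at most $2$, so the gap must come from genuinely metric behaviour --- in the paper it comes from the non-twin pairs $z_i,z_i'$ whose resolving sets $\{z_i,z_i',t_i\}$ are destroyed as obstructions once $e$ is removed. But correct constraints on what a construction must look like do not substitute for producing one; without either verifying the diameter-restricted version of Theorem~\ref{dim_edge_deletion}(b) or giving and analysing an explicit family, the statement remains unproved.
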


\begin{proof}
Let $G$ be the graph in Figure~\ref{fig_kdim_edge}. For each $i\in\{1,2,\ldots, a\}$, $N_G(x_i)=N_G(y_i)=\{z_i, z'_i\}=N_{G-e}(x_i)=N_{G-e}(y_i)$. Let $k \ge 2$ and $a\ge 2$ be any integers. Let $S$ be any minimum distance-$k$ resolving set for $G-e$, and let $S'$ be any distance-$k$ resolving set for $G$. By Observation~\ref{obs_twin}(b), $S \cap\{x_i, y_i\} \neq\emptyset$ and $S' \cap\{x_i, y_i\} \neq\emptyset$ for each $i \in \{1,2,\ldots a\}$; without loss of generality let $S_0=\cup_{i=1}^{a}\{x_i\} \subseteq S \cap S'$.

First, we show that $\dim_k(G-e)=a+1$. Since $r_{k,S_0}(z_i)=r_{k,S_0}(z'_i)$ for each $i\in\{1,2,\ldots, a\}$ in $G-e$, $|S| \ge a+1$, and hence $\dim_k(G-e) \ge a+1$. Since $S_0 \cup \{v\}$ forms a distance-$k$ resolving set of $G-e$, $\dim_k(G-e) \le a+1$. So, $\dim_k(G-e)=a+1$.

Second, we show that $\dim_k(G)=2a$. Note that, for each $i \in \{1,2,\ldots, a\}$,  $r_{k,S_0}(z_i)=r_{k,S_0}(z'_i)$ in $G$ and $R_k\{z_i, z'_i\}=\{z_i, z'_i, t_i\}$; thus, $S' \cap \{z_i, z'_i, t_i\} \neq \emptyset$ for each $i\in\{1,2,\ldots,a\}$. So, $|S'| \ge 2a$, and hence $\dim_k(G) \ge 2a$. Since $S_0\cup(\cup_{i=1}^{a} \{z_i\})$ forms a distance-$k$ resolving set of $G$, $\dim_k(G)\le 2a$. Thus, $\dim_k(G)=2a$.

Therefore, $\dim_k(G)-\dim_k(G-e)=2a-(a+1)=a-1 \rightarrow \infty$ as $a \rightarrow \infty$.~\hfill
\end{proof}

\begin{figure}[ht]
\centering
\begin{tikzpicture}[scale=.7, transform shape]

\node [draw, shape=circle, scale=.8] (1) at  (4, 1.5) {};
\node [draw, shape=circle, scale=.8] (2) at  (6, 1.5) {};
\node [draw, shape=circle, scale=.8] (3) at  (3, 0) {};
\node [draw, shape=circle, scale=.8] (4) at  (5.5, 0) {};
\node [draw, shape=circle, scale=.8] (5) at  (4,-1.5) {};
\node [draw, shape=circle, scale=.8] (6) at  (6, -1.5) {};
\node [draw, shape=circle, scale=.8] (7) at  (6, -3.5) {};
\node [draw, shape=circle, scale=.8] (8) at  (8, -3.5) {};

\node [draw, shape=circle, scale=.8] (a1) at  (1.5, 0) {};
\node [draw, shape=circle, scale=.8] (a2) at  (2.25, 0.5) {};
\node [draw, shape=circle, scale=.8] (a3) at  (2.25, -0.5) {};

\node [draw, shape=circle, scale=.8] (b1) at  (2.5, -1.5) {};
\node [draw, shape=circle, scale=.8] (b2) at  (3.25, -1) {};
\node [draw, shape=circle, scale=.8] (b3) at  (3.25, -2) {};

\node [draw, shape=circle, scale=.8] (c1) at  (4.5, -3.5) {};
\node [draw, shape=circle, scale=.8] (c2) at  (5.25, -3) {};
\node [draw, shape=circle, scale=.8] (c3) at  (5.25, -4) {};

\node [scale=1.1] at (2.4,0.8) {$x_1$};
\node [scale=1.1] at (2.35,-0.8) {$y_1$};
\node [scale=1.1] at (3.4,-0.7) {$x_2$};
\node [scale=1.1] at (3.4,-2.3) {$y_2$};
\node [scale=1.1] at (5.3,-2.7) {$x_a$};
\node [scale=1.1] at (5.3,-4.3) {$y_a$};
\node [scale=1.1] at (5.6,-0.38) {$t_1$};
\node [scale=1.1] at (6.15,-1.85) {$t_2$};
\node [scale=1.1] at (8.15,-3.85) {$t_a$};
\node [scale=1.2] at (6.33,1.5) {$v$};
\node [scale=1.1] at (1.15,0) {$z_1$};
\node [scale=1.1] at (2.15,-1.5) {$z_2$};
\node [scale=1.1] at (4.15,-3.5) {$z_a$};

\node [scale=1.3] at (5,1.75) {\bf$e$};

\draw(1)--(3)--(4)--(2);\draw(1)--(5)--(6)--(2);\draw(1)--(7)--(8)--(2);
\draw(a1)--(a2)--(3)--(a3)--(a1);
\draw(a1) .. controls (2.25,1.15) .. (1);
\draw(b1)--(b2)--(5)--(b3)--(b1);\draw(b1)--(1);
\draw(c1)--(c2)--(7)--(c3)--(c1);\draw(c1)--(1);
\draw[very thick](1)--(2);

\draw[thick, dotted] (6.35,-2)--(7.8,-3.3);

\end{tikzpicture}
\caption{\small Graphs $G$ such that $\dim_k(G) - \dim_k(G-e)$ can be arbitrarily large, where $k\ge 2$ and $a \ge 2$.}\label{fig_kdim_edge}
\end{figure}
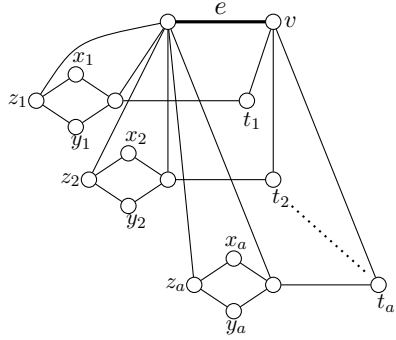


\begin{thebibliography}{0}

\bibitem{br} A.F. Beardon and J.A. Rodr\'{ı}guez-Vel\'{a}zquez, On the k-metric dimension of metric spaces. \textit{Ars Math. Contemp.} \textbf{16} (2019) 25-38.

\bibitem{wheel1} P.S. Buczkowski, G. Chartrand, C. Poisson and P. Zhang, On $k$-dimensional graphs and their bases. \textit{Period. Math. Hungar.} \textbf{46} (2003) 9-15.

\bibitem{fan} J. C\'{a}ceres, C. Hernando, M. Mora, I.M. Pelayo, M.L. Puertas and C. Seara, On the metric dimension of some families of graphs. \textit{Electron. Notes Discrete Math.} \textbf{22} (2005) 129-133.

\bibitem{grid} J. C\'{a}ceres, C. Hernando, M. Mora, I.M. Pelayo, M.L. Puertas, C. Seara and D.R. Wood, On the metric dimension of Cartesian products of graphs. \textit{SIAM J. Discrete Math.} \textbf{21(2)} (2007) 423-441.

\bibitem{tree1} G. Chartrand, L. Eroh, M.A. Johnson and O.R. Oellermann, Resolvability in graphs and the metric dimension of a graph. \textit{Discrete Appl. Math.} \textbf{105} (2000) 99-113.

\bibitem{joc} L. Eroh, P. Feit, C.X. Kang and E. Yi, The effect of vertex or edge deletion on the metric dimension of graphs. \textit{J. Comb.} \textbf{6(4)} (2015) 433-444.

\bibitem{linegraph} L. Eroh, C.X. Kang and E. Yi, Metric dimension and zero forcing number of two families of line graphs. \textit{Math Bohem.} \textbf{139(3)} (2014) 467-483.

\bibitem{cdim} L. Eroh, C.X. Kang and E. Yi, The connected metric dimension at a vertex of a graph. \textit{Theoret. Comput. Sci.} \textbf{806} (2020) 53-69.

\bibitem{aem} A. Estrada-Moreno, On the (k, t)-metric dimension of a graph. Doctoral Thesis (2016)

\bibitem{eyr} A. Estrada-Moreno, I.G. Yero and J.A. Rodr\'{ı}guez-Vel\'{a}zquez, On the (k, t)-metric dimension of graphs. \textit{The Computer Journal} \textbf{64} (2021) 707-720.

\bibitem{Juan} H. Fernau and J.A. Rodr\'{ı}guez-Vel\'{a}zquez, On the (adjacency) metric dimension of corona and strong product graphs and their local variants: combinatorial and computational results.  \textit{Discrete Appl. Math.} \textbf{236} (2018) 183-202. 

\bibitem{NP} M.R. Garey and D.S. Johnson, \textit{Computers and intractability: A guide to the theory of NP-completeness}. Freeman, New York, 1979.

\bibitem{mdapa} J. Geneson, Metric dimension and pattern avoidance in graphs. \textit{Discrete Appl. Math.} \textbf{284} (2020) 1-7.

\bibitem{broadcast} J. Geneson and E. Yi, Broadcast dimension of graphs (2020) \url{https://arxiv.org/abs/2005.07311}

\bibitem{harary} F. Harary and R.A. Melter, On the metric dimension of a graph. \textit{Ars Combin.} \textbf{2} (1976) 191-195.

\bibitem{Hernando}  C. Hernando, M. Mora, I.M. Pelayo, C. Seara and D.R. Wood, Extremal graph theory for metric dimension and diameter. \textit{Electron. J. Combin.} (2010) \#R30.

\bibitem{adim} M. Jannesari and B. Omoomi, The metric dimension of the lexicographic product of graphs. \textit{Discrete Math.} \textbf{312} (2012) 3349-3356.

\bibitem{petersen} I. Javaid, M.T. Rahim and K. Ali, Families of regular graphs with constant metric dimension. \textit{Util. Math.} \textbf{75} (2008) 21-33.

\bibitem{tree2} S. Khuller, B. Raghavachari and A. Rosenfeld, Landmarks in graphs. \textit{Discrete Appl. Math.} \textbf{70} (1996) 217-229.

\bibitem{unicyclic} C. Poisson and P. Zhang, The metric dimension of unicyclic graphs. \textit{J. Combin. Math. Combin. Comput.} \textbf{40} (2002) 17-32.

\bibitem{kpartite} S.W. Saputro, E.T. Baskoro, A.N.M. Salman and D. Suprijanto, The metric dimension of a complete $n$-partite graph and its Cartesian product with a path. \textit{J. Combin. Math. Combin. Comput.} \textbf{71} (2009) 283-293.

\bibitem{wheel2} B. Shanmukha, B. Sooryanarayana and K.S. Harinath, Metric dimension of wheels. \textit{Far East J. Appl. Math.} \textbf{8} (2002) 217-229.

\bibitem{slater} P.J. Slater, Leaves of trees. \textit{Congr. Numer.} \textbf{14} (1975) 549-559.

\bibitem{wagner} K. Wagner, \"{U}ber eine Eigenschaft der ebenen Komplexe. \textit{Math. Ann.} \textbf{114} (1937) 570-590.


\end{thebibliography}
\end{document}